\pgfplotsset{/pgfplots/group/.cd,
    horizontal sep= 2cm,
    vertical sep= 2cm
}
\pgfplotsset{
  every axis plot/.append style={line width=2pt,},
  every axis/.append style={
    ymajorgrids,
    xmajorgrids,
    grid style={dashed, lightgray,semithick},
   	axis line style = semithick,
   	every tick/.style={semithick,},
	  yticklabels={,,},
  },
}
\newcommand{\N}{{\mathbb{N}}}
\newcommand{\R}{{\mathbb{R}}}
\newcommand{\Cn}{{\mathbb{C}}}
\newcommand{\tr}{\textsf{tr}}
\newcommand{\rT}[1]{#1^{\textsf{T}}}
\newcommand{\rH}[1]{#1^{\textsf{H}}}
\newcommand{\rmH}[1]{#1^{-\textsf{H}}}
\newcommand{\si}[1]{{#1^{+}}}
\newcommand{\grad}[1][]{{\nabla_{#1}}}
\newcommand{\dd}[2][]{{\frac{\mathrm{d}^{#1}}{\mathrm{d}#2}}}
\newcommand{\ddt}{{\dd{t}}}
\newcommand{\Jtwo}[1]{{\mathbb{J}_{2#1}}}
\newcommand{\TJtwo}[1]{{\rT{\mathbb{J}}_{2#1}}}
\newcommand{\Jtk}{{\Jtwo{k}}}
\newcommand{\JtN}{{\Jtwo{N}}}
\newcommand{\TJtN}{{\TJtwo{N}}}
\newcommand{\I}[1]{{\fI_{#1}}}
\newcommand{\Z}[1]{{\fzero}_{#1}}
\newcommand{\tInit}{{t_{\mathrm{0}}}}
\newcommand{\tEnd}{{t_{\mathrm{end}}}}
\newcommand{\It}{I_t}
\newcommand{\fxInit}{{\fx_{\mathrm{0}}}}
\newcommand{\paramDomain}{\mathcal{P}}
\newcommand{\reduce}[1]{#1_{\mathrm{r}}}
\newcommand{\fxr}{\reduce{\fx}}
\newcommand{\solMnf}{\mathcal{M}}
\newcommand{\ns}{{n_{\mathrm{s}}}}
\newcommand{\xs}{\fx^{\mathrm{s}}}
\newcommand{\Xs}{{\fX_{\mathrm{s}}}}
\newcommand{\fYs}{\fY_{\mathrm{s}}}
\newcommand{\Ham}{\mathcal{H}}
\newcommand{\Hamr}{\reduce{\Ham}}
\newcommand{\fe}{{\bm{e}}}
\newcommand{\fp}{{\bm{p}}}
\newcommand{\fq}{{\bm{q}}}
\newcommand{\fx}{{\bm{x}}}
\newcommand{\fA}{{\bm{A}}}
\newcommand{\fB}{{\bm{B}}}
\newcommand{\fC}{{\bm{C}}}
\newcommand{\fD}{{\bm{D}}}
\newcommand{\fE}{{\bm{E}}}
\newcommand{\fF}{{\bm{F}}}
\newcommand{\fH}{{\bm{H}}}
\newcommand{\fI}{{\bm{I}}}
\newcommand{\fM}{{\bm{M}}}
\newcommand{\fP}{{\bm{P}}}
\newcommand{\fQ}{{\bm{Q}}}
\newcommand{\fR}{{\bm{R}}}
\newcommand{\fU}{{\bm{U}}}
\newcommand{\fV}{{\bm{V}}}
\newcommand{\fW}{{\bm{W}}}
\newcommand{\fX}{{\bm{X}}}
\newcommand{\fY}{{\bm{Y}}}
\newcommand{\fZ}{{\bm{Z}}}
\newcommand{\fmu}{{\bm{\mu}}}
\newcommand{\fxi}{{\bm{\xi}}}
\newcommand{\fomega}{{\bm{\omega}}}
\newcommand{\fSigma}{{\bm{\Sigma}}}
\newcommand{\fOmega}{{\bm{\varOmega}}}
\newcommand{\fzero}{\ensuremath{\bm{0}}}
\newcommand{\calC}{\mathcal{C}}
\newcommand{\VE}{\fV_\text{E}}
\newcommand{\VQ}{\fV_\text{Q}}
\newcommand{\VP}{\fV_\text{P}}
\newcommand{\Xc}{{\fX_\text{c}}}
\newcommand{\Vc}{\fV_\text{c}}
\newcommand{\Uc}{\fU_\text{c}}
\newcommand{\VY}{\fV_\fY}
\newcommand{\fSigmaY}{\fSigma_\fY}
\newcommand{\UY}{\fU_\fY}
\newcommand{\UB}{\fU_\fB}
\newcommand{\VB}{\fV_\fB}
\newcommand{\fSigmaB}{\fSigma_\fB}
\newcommand{\povs}{p_\mathrm{ovs}}
\newcommand{\qpow}{q_\mathrm{pow}}
\newcommand{\Ucr}{\fU_\mathrm{c}^\mathrm{r}}
\newcommand{\VrcSVD}{\fV_\mathrm{rcSVD}}
\newcommand{\VcSVD}{\fV_\mathrm{cSVD}}
\newcommand{\fSigmac}{\fSigma_\mathrm{c}}
\newcommand{\im}{\mathrm{i}}
\newcommand{\Qs}{\fQ_s}
\newcommand{\Ps}{\fP_s}
\newtheorem{theorem}{Theorem}
\newtheorem{proposition}{Proposition}
\newtheorem{lemma}{Lemma}
\newtheorem{definition}{Definition}
\newtheorem*{remark}{Remark}
\newcommand{\blkdiag}{\mathrm{blkdiag}}
\renewcommand{\Re}{\mathrm{Re}}
\newcommand{\eff}{\textit{eff}}
\title{Error Analysis of Randomized Symplectic Model Order Reduction for Hamiltonian systems}
\author{R. Herkert, P. Buchfink, B. Haasdonk, J. Rettberg, J. Fehr}
\begin{document}
\maketitle
\begin{abstract}
Solving high-dimensional dynamical systems in multi-query or real-time applications requires efficient surrogate modelling techniques, as e.g.,\ achieved via model order reduction (MOR). If these systems are Hamiltonian systems their physical structure should be preserved during the reduction, which can be ensured by applying symplectic basis generation techniques such as the complex SVD (cSVD). Recently, randomized symplectic methods such as the randomized complex singular value decomposition (rcSVD) have been developed for a more efficient computation of symplectic bases that preserve the Hamiltonian structure during MOR. In the current paper, we present two error bounds for the rcSVD basis depending on the choice of hyperparameters and show that with a proper choice of hyperparameters, the projection error of rcSVD is at most a constant factor worse than the projection error of cSVD. We provide numerical experiments that demonstrate the efficiency of randomized symplectic basis generation and compare the bounds numerically.
\end{abstract}

\noindent
\textbf{Keywords:} Symplectic Model Order Reduction,  Hamiltonian Systems, Randomized Algorithm, Error Analysis

\noindent
\textbf{MSC codes:} 15A52, 65G99, 65P10, 68W20, 93A15 

\section{Introduction}
On the one hand, classical simulation methods rely on simulation models based on physical principles. On the other hand, data-based modelling techniques using machine learning are becoming increasingly popular. Current trends tend to merge those principles by enriching the physics-based models with data and to include physical prior knowledge in data-based models. In the context of model order reduction (MOR) such a fusion of physics and data-based modelling can be realized by snapshot-based (physical) structure-preserving MOR.
One way to model physical systems while guaranteeing conservation principles,  is using the framework of Hamiltonian systems which are for example often used in mechanics, optics, quantum mechanics or theoretical chemistry. 
The mathematical structure of this kind of systems ensures conservation of the Hamiltonian (which can be understood as the energy contained in the system) and under certain assumptions stability properties \cite{Meyer2017}. These simulation models may be of large scale especially in real-world applications as they may arise from spatially discretized PDEs. Therefore, in multi-query or real-time applications efficient surrogate modelling techniques, e.g.,\ achieved via MOR are required. 
However, classical data-based MOR via the Proper Orthogonal Decomposition (POD) \cite{Volkwein2013} does not necessarily preserve the Hamiltonian structure in the reduced order model (ROM) which could lead to unphysical models that may violate conservation properties and could become unstable. Therefore, it is necessary to ensure the preservation of the Hamiltonian structure by the applied MOR technique.
This can be accomplished by symplectic MOR where the system is projected to a low-dimensional, symplectic subspace \cite{Buchfink2019,MaboudiAfkham2017,Peng2016}. For low-dimensional problems, a symplectic matrix can be computed by numerically solving the proper symplectic decomposition (PSD) optimization problem \cite{Peng2016}. For high-dimensional problems numerically solving the optimization problem is not feasible and other techniques have to be used to construct a reduced basis. A popular method to compute a symplectic basis is the complex singular value decomposition (cSVD) \cite{Peng2016}. This technique involves computing a low-rank matrix approximation, which can also result in high computational costs in the offline-phase. Randomized approaches for computing low-rank matrix factorization \cite{Halko2011,Mahoney2011,Woodruff2014,Murray2023} are a promising way to lower this computational effort while preserving a high approximation quality compared to classical methods. Randomized techniques can be used to solve various numerical linear algebra problems more efficiently, such as the computation of a determinant \cite{Boutsidis2017}, Gram--Schmidt orthonormalization  \cite{Balabanov2022GS}, the computation of an eigenvalue decomposition or an SVD \cite{Halko2011}, rank estimation \cite{Meier2024}, the computation of a LU decomposition \cite{Li2020}, or the computation of a generalized LU decomposition \cite{Demmel2023}. In the context of MOR the capability of randomized algorithms has been shown by applying randomization for more efficient basis generation
\cite{Alla2019,Bach2019,Hochman2014}. In \cite{Buhr2018}
the concept of randomized basis generation is merged with ideas from domain decomposition. Random sketching techniques have further been used for computing parameter-dependent preconditioners \cite{Zahm2016} or for approximating a ROM by its random sketch \cite{Balabanov2019,Balabanov2021}. In \cite{Schleuss23}  time-dependent problems are treated by constructing randomized local approximation spaces in time. While none of these approaches guarantees to preserve a Hamiltonian structure, in \cite{Herkert2023rand} we presented randomized techniques for symplectic basis generation and reported initial encouraging numerical experiments. The scope of the current work is to improve the methods presented there and give a theoretical foundation by mathematical error analysis. Our key contributions are: 
\begin{enumerate}
\item We prove that the randomized complex SVD (rcSVD) \cite{Herkert2023rand} is quasi-optimal in the set of symplectic matrices with orthonormal columns.
\item  We present an error bound depending on the hyperparameters which yields a better understanding of the method and better intuition on how to choose the hyperparameters depending on the problem. 
\item We show how the rcSVD algorithm can be reformulated into a version that works only with real matrices.
\end{enumerate}
Our paper is structured as follows: An introduction to structure-preserving MOR is given  in \Cref{SympMOR}. In \Cref{sec:quasiopt}, we prove quasi-optimality for the rcSVD in the set of symplectic bases with orthonormal columns. \Cref{sec:powinfl} analyzes the influence of power iterations and present an error bound depending on the hyperparameters. We present a formulation of the cSVD algorithm based on real numbers in \Cref{sec:interpretability}. In \Cref{Numerics}, we show numerical experiments that demonstrate the computational efficiency of randomized symplectic basis generation and compare the bounds numerically. The work is concluded in \Cref{Conc}. 

\section{Structure-Preserving MOR}
\label{SympMOR}
In this section, we give an introduction to both, classical structure-preserving, symplectic MOR and randomized structure-preserving, symplectic MOR using the randomized complex SVD.
\subsection{Hamiltonian Systems and Symplectic MOR}\label{subs:sympMor}
We start with an overview on
Hamiltonian systems and structure-preserving, symplectic MOR for parametric high-dimensional Hamiltonian systems. For
a more detailed introduction, we refer to \cite{Benner2021,Benner2017,Benner2015} (MOR), \cite{daSilva2008} (symplectic geometry
and Hamiltonian systems) and \cite{Peng2016} (symplectic MOR of Hamiltonian systems).

We assume to be given a \emph{parametric Hamiltonian
(function)} $\Ham(\cdot; \fmu) \in \calC^1(\R^{2N}, \R)$, depending on a parameter vector $\fmu \in \paramDomain$ with parameter set $\paramDomain \subset \R^{p}$ and a parameter-dependent initial value $\fxInit(\fmu)$. Then, the parametric Hamiltonian system reads: For a given time interval $\It = [\tInit, \tEnd]$ and fixed (but arbitrary) parameter vector
$\fmu \in \paramDomain$, find the solution $\fx(\cdot; \fmu) \in \calC^1(\It, \R^{2N})$ of
\begin{equation}\label{eq:ham_sys}
\begin{aligned}
  \ddt \fx(t; \fmu)
&= \JtN \grad[\fx] \Ham(\fx(t; \fmu); \fmu) \qquad \text{for all } t \in \It,\\
  \fx(\tInit; \fmu) &= \fxInit(\fmu).
\end{aligned}
\end{equation}
with 
 \emph{canonical Poisson matrix}
\begin{align*}
  \JtN := \begin{bmatrix}
    \Z{N} & \I{N}\\
    -\I{N} & \Z{N}
  \end{bmatrix} \in \R^{2N \times 2N},
\end{align*}
where $\I{N}, \Z{N} \in \R^{N \times N}$ denote the identity and zero matrix.
In some cases it is convenient to split the solution
$\fx(t; \fmu) = [\fq(t; \fmu); \fp(t; \fmu)]$ in separate coordinates
$\fq(t; \fmu),\; \fp(t; \fmu) \in \R^N$ which are referred to as the generalized position $\fq$ and generalized momentum $\fp$. Note that here and in the following we use MATLAB-style notation for matrix indexing and stacking. 
One important property of a Hamiltonian system is that the solution preserves the Hamiltonian over time, i.e.,
$\ddt \Ham(\fx(t; \fmu); \fmu) = 0$ for all $t \in \It$.

Symplectic MOR \cite{MaboudiAfkham2017,Peng2016} is a projection-based MOR technique to reduce parametric high-dimensional Hamiltonian systems. It essentially consists of constructing a suitable symplectic reduced basis matrix $\fV \in \R^{2N\times2k}$ which is then used for projecting the full-order system to a reduced surrogate.
It results in a ROM that is a low-dimensional
Hamiltonian system with a \emph{reduced Hamiltonian}
$\Hamr(\fxr) := \Ham(\fV \fxr)$.
This is obtained by (i) the ROB matrix $\fV\in \R^{2N\times2k}$ being a \emph{symplectic matrix} i.e.,
\begin{align*}
  \rT\fV \JtN \fV = \Jtk
\end{align*}
and (ii) setting the projection matrix $\fW \in \R^{2N\times2k}$ as the transpose of the so-called \emph{symplectic inverse}
$\si{\fV}\in \R^{2k\times2N}$ of the ROB matrix $\fV$, i.e.,
\begin{align*}
  \rT\fW := \si{\fV} := \Jtk \rT\fV \TJtN.
\end{align*}
Then, the reduced parametric Hamiltonian system reads: For a fixed (but arbitrary) parameter vector
$\fmu \in \paramDomain$, find the solution $\fxr(\cdot; \fmu) \in \calC^1(\It, \R^{2k})$
of
\begin{equation}\label{eq:ham_sys}
\begin{aligned}
  \ddt \fxr(t; \fmu)
&= \Jtk \grad[\fxr] \Hamr(\fxr(t; \fmu); \fmu) \qquad \text{for all } t \in \It,\\
  \fxr(\tInit; \fmu) &= \si{\fV}\fxInit(\fmu).
\end{aligned}
\end{equation}

\subsection{Symplectic basis generation using the complex SVD (cSVD)}\label{sec:cSVD}
In this section, we present the symplectic basis generation technique complex SVD (cSVD).   
Consider the snapshot matrix $\Xs := [\xs_1,..,\xs_\ns] \in \R^{2N \times \ns}$ with $\xs_i \in \solMnf, i = 1,...,\ns$ where $\solMnf$ denotes the set of all solutions 
\begin{align*}
 \solMnf := \left\{
   \fx(t; \fmu) \,\vert  \, (t, \fmu) \in \It \times \paramDomain
 \right\} \subset \R^{2N}.
\end{align*}
Next, $\Xs$ is split into $\Xs = [\Qs; \Ps]$, with $\Qs,  \Ps \in \R^{N \times \ns}$. The main idea of the cSVD algorithm is to compute a truncated SVD of the complex snapshot matrix $\Uc \fSigma_c\rH{\Vc} \approx \Xc := \Qs + \im \Ps \in \Cn^{N \times \ns}$. Then, the matrix $\Uc \in \Cn^{N\times k}$ is split into real and imaginary part $\Uc = \VQ + \im\VP,$ with $\VQ,\VP\in \R^{N \times k}$ and mapped to
$$
\VcSVD := \mathcal{A}(\Uc) :=
\begin{pmatrix}
 &\VQ &-\VP \\
&\VP &\VQ
\end{pmatrix} \in \R^{2N \times 2k}
.
$$
With this mapping $\mathcal{A}$, a
complex matrix with orthonormal columns is mapped from the complex Stiefel manifold $V_k(\Cn^N)$ to a real symplectic matrix in $\R^{2N\times 2k}$, i.e., $\mathcal{A}: V_k(\Cn^N) \to \R^{2N \times 2k}$ \cite{Peng2016}.
The symplectic cSVD basis matrix $\VcSVD$ and its symplectic inverse $\si{\VcSVD}$ are then used to construct the ROM. In \cite{Buchfink2019} it has been shown that the cSVD procedure yields the optimal symplectic basis in the set of ortho-symplectic matrices (i.e., symplectic with orthonormal columns). Furthermore, every ortho-symplectic matrix $\VE\in \R^{2N \times 2k}$ has the block structure $\VE = [\fE, \JtN\fE]$ where $\fE \in V_k(\R^N)$. Therefore, general ortho-symplectic matrices will in the following be denoted with $\VE.$  
The procedure is summarized as \Cref{alg:cSVD}.

\begin{algorithm}[H]
\caption{Complex SVD (cSVD)}
\label{alg:cSVD}

\hspace*{\algorithmicindent}\textbf{Input}: Snapshot matrix $\Xs \in \R^{2N\times\ns}$ , target size $2k \in \N$ of the ROB, \\ 
\hspace*{\algorithmicindent}\textbf{Output}: Symplectic ROB matrix $\fV_\mathrm{cSVD} \in \R^{2N \times2k}$
\begin{algorithmic}[1]
\item $\Xc = \Xs(1 : N, :) + \im \Xs((N + 1) : (2N ), :)$ \Comment{complex snapshot matrix}
\item $[\Uc, \fSigmac, \Vc] =$ SVD$(\Xc)$ \Comment{basis for approximation of $\Xc$}
\item $\fU_{c(k)} = \Uc(:, 1:k)$ \Comment{truncate to rank-$k$ basis}
\item $\VQ =$ Re$(\fU_{c(k)}) , \VP =$ Im$(\fU_{c(k)})$ \Comment{split in real and imaginary part}
\item $\VcSVD = [\fV_Q, -\fV_P; \fV_P, \fV_Q]$ \Comment{map to symplectic matrix}
\end{algorithmic}
\end{algorithm}
\subsection{Symplectic basis generation using the randomized complex SVD}
In this section, we present a brief summary on randomized matrix factorizations and a refined version of the randomized complex SVD (rcSVD) algorithm from \cite{Herkert2023rand}. In the following we focus on the randomized SVD. Similar techniques can be applied to other types of factorizations. We refer to \cite{Halko2011} for a more detailed presentation on randomized matrix factorization. In this section, we use general notation for the matrix sizes $m,n$ as the results are more general than only covering our case from the previous section. In the context of Hamiltonian systems, we later will use $m = N$ and $n = n_s$.
The computation of a randomized SVD of a matrix $\fA \in \Cn^{m\times n}$ proceeds in two stages. First, using random sampling methods \cite{Halko2011}, a matrix $\fQ \in \Cn^{m\times k},$ with $k \leq m$ and $k \leq n$ with orthonormal columns is computed that approximates $\fA \approx \fQ\rH{\fQ}\fA$. To obtain this, a so-called random sketch $\fY = \fA\fOmega \in \Cn^{m\times k}$ is computed where the sketching matrix $\fOmega \in \Cn^{n\times k}$ is drawn from some random distribution (e.g. an elementwise normal distribution). Then, the columns of $\fY$ are orthonormalized to form the matrix $\fQ$. Based on the approximation of $\fA$ by $\fQ\rH{\fQ}\fA$, a randomized version of the SVD can be formulated: With the definition $\fB:=\rH{\fQ}\fA$, its SVD is $\fB= \fU_\fB \fSigma_\fB \rH{\fV_\fB}$ and by setting $\fU := \fQ \fU_\fB$ we get the randomized SVD $\fA \approx \fU \fSigma_\fB \rH{\fV}_\fB$. The fact that $\fU$ has orthonormal columns follows by its definition as a product of two of such matrices. Instead of using a sketching matrix of target rank $k$, it is known that the approximation quality can be improved by introducing an oversampling parameter $\povs$ and aiming for
$l:=k+\povs$ columns for $\fOmega$ \cite{Halko2011} (see step 2 in \Cref{alg:rCSVD}), and truncate to a rank-$k$ basis (see steps 5, 6, 7 in \Cref{alg:rCSVD}). The method can be further improved by applying power iterations. This means that for $\qpow \in \N_0$, the random sketch is computed as $\fY = \fA(\rH{\fA}\fA)^{\qpow} \fOmega$. Especially for matrices whose singular values decay slowly this can be useful. 
A computational advantage in comparison with a direct factorization of $\fB$ will be achievable if $l\ll n$ and $l \ll m$. This procedure is particularly efficient when using a special random sketching matrix such as the subsampled randomized Fourier transform (SRFT) \cite{Halko2011} that allows the multiplication $\fB \fOmega$ to be performed in $\mathcal{O}(m n \log(l))$ flops. 
\begin{definition}\label{def:SRFT}
An SRFT is an $n \times l$ matrix of the form
$$\fOmega = \sqrt{\frac{n}{l}}\fD\fF \fR, $$
with
\begin{enumerate}
\item $\fD\in \Cn^{n\times n}$ diagonal, with diagonal entries that are independent random variables uniformly distributed on the complex unit circle,
\item  $\fF \in \Cn^{n\times n}$ a unitary discrete Fourier transform (DFT) and
\item $\fR$ $\in \R^{n \times l}$ selection matrix where its columns are drawn randomly without replacement from the columns of the identity matrix  $\I{n}.$
\end{enumerate}
\end{definition}
In order to apply randomization to symplectic basis generation, the idea of the rcSVD algorithm is to replace the computation of the truncated SVD of $\Xc$ with a randomized rank-$k$ approximation of the complex snapshot matrix $\Xc$. The procedure is summarized as \Cref{alg:rCSVD}. In comparison with the original algorithm in \cite{Herkert2023rand}, the truncation step is refined via the computation of an additional SVD of a small matrix (see \Cref{alg:rCSVD} steps 5, 6, 7). This improves the approximation quality and is also necessary for the mathematical analysis presented in the next section which does not work for the original version of the method. 

\begin{algorithm}[H]
\caption{Randomized Complex SVD (rcSVD)}
\label{alg:rCSVD}

\hspace*{\algorithmicindent}\textbf{Input}: Snapshot matrix $\Xs \in \R^{2N\times\ns}$ , target rank $2k \in \N$ of the ROB, \\ 
\hspace*{\algorithmicindent}oversampling parameter $p_\mathrm{ovs} \in \N_0$,  power iteration number $q_\mathrm{pow} \in \N_0$ \\
\hspace*{\algorithmicindent}\textbf{Output}: Symplectic ROB matrix $\fV_\mathrm{rcSVD} \in \R^{2N \times2k}$
\begin{algorithmic}[1]
\item $\Xc = \Xs(1 : N, :) + \im \Xs((N + 1) : (2N ), :)$ \Comment{complex snapshot matrix}
\item $\fOmega$ = SRFT($\ns, l), \text{ with } l:= k+\povs $ \Comment{draw a random sketching matrix}
\item $\fY = \Xc (\rH {\Xc }\Xc )^{\qpow} \fOmega$
\item $[\UY, \fSigmaY, \VY] =$ SVD$(\fY)$ \Comment{basis for approximation of $\fY$}
\item $\fB = \rH{\UY}\Xc$
\item $[\UB, \fSigmaB, \VB] =$ SVD$(\fB)$ \Comment {basis for approximation of $\fB$}
\item $\Ucr = \UY\UB(:, 1:k)$ \Comment{truncate to rank-$k$ basis}
\item $\VQ =$ Re$(\Ucr) , \VP =$ Im$(\Ucr)$ \Comment{split in real and imaginary part}
\item $\VrcSVD = [\fV_Q, -\fV_P; \fV_P, \fV_Q]$ \Comment{map to symplectic matrix}
\end{algorithmic}
\end{algorithm}

\section{Quasi-optimality for the rcSVD in the set of ortho-symplectic matrices}\label{sec:quasiopt}
In \cite{Buchfink2019} it has been shown that the cSVD algorithm \cite{Peng2016} yields an optimal solution of the PSD in the set of ortho-symplectic bases. I.e., for $\Xs = [\fP; \fQ] \in \R^{2N \times \ns}, \VcSVD \in \R^{2N\times 2k}$ it holds that
\begin{align}
\min\limits_{\VE \in \R^{2N \times 2k} \text{ ortho-symplectic}}\vert\vert\Xs - \VE \rT{\VE}\Xs\vert\vert_F^2  = \vert\vert\Xs - \VcSVD \rT{\VcSVD}\Xs\vert\vert_F^2
\end{align}
with projection error
\begin{align}
\vert\vert\Xs - \VcSVD \rT{\VcSVD}\Xs\vert\vert_F^2 = \sum\limits_{j \geq k+1} \sigma_j^2,
\end{align}
where $\sigma_j,j = 1,..,\ns$ denote the singular values of the complex snapshot matrix 
$\Xc = \fQ + \im \fP.$
In the following, we show that the rcSVD procedure (see \Cref{alg:rCSVD}) is quasi-optimal in the set of ortho-symplectic matrices. Before doing so, we recall some results from \cite{Tropp2011Ana} on structured random matrices. The first one states a bound on the smallest singular value of a matrix resulting from randomly sampling rows from a matrix with orthonormal columns. It is a slight reformulation of \cite[Lemma 3.2]{Tropp2011Ana} and therefore, we omit the proof. 

\begin{lemma}[Row sampling \cite{Tropp2011Ana}]\label{lemma1}
Consider a matrix $\fW\in \Cn^{n \times k}$ with orthonormal columns, and define
the quantity $M := n \max\limits_{j=1,...,n}\vert\vert\rT{\fe_j}  \fW\vert\vert_2^2$ where $\fe_j$ denotes the $j$-th unit vector. For a positive parameter $\alpha$, select the sample size $l$ with $ \alpha M \log(k)\leq l \leq n$. Draw a random subset $T$ of size $l$ from $\{1, 2, . . . , n\}$ and define the matrix $\fR\in\R^{n\times l}$ by stacking the corresponding unit vectors as column vectors (see \Cref{def:SRFT}). Then, for $\delta \in [0,1)$ it holds that
\begin{equation}\label{singbound}
\sqrt{\frac{(1-\delta)l}{n}}\leq \sigma_k(\rT{\fR}\fW)
\end{equation}
with failure probability at most 
$$\mathcal{P}_\text{f} = k\left[\frac{e^{-\delta} }{(1-\delta)^{1-\delta}} \right]^{\alpha \log( k)},$$
i.e., \Cref{singbound} holds with probability at least $1 - \mathcal{P}_\text{f}.$
\end{lemma}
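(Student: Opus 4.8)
The plan is to convert the lower bound on the smallest singular value into a lower-tail estimate for the smallest eigenvalue of a random sum of rank-one matrices, and then invoke a matrix Chernoff inequality. First I would write $\sigma_k(\rT{\fR}\fW)^2 = \lambda_{\min}(\rH{\fW}\fR\rT{\fR}\fW)$. Since $\fR$ stacks the unit vectors $\fe_j$ for $j \in T$, the matrix $\fR\rT{\fR}$ is the diagonal indicator of the sampled set $T$, so the Gram matrix decomposes as $\rH{\fW}\fR\rT{\fR}\fW = \sum_{j \in T}\fw_j\rH{\fw_j}$, where $\fw_j := \rH{\fW}\fe_j \in \Cn^k$ is (the conjugate of) the $j$-th row of $\fW$. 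Thus the quantity to control is the minimal eigenvalue of a random positive-semidefinite sum indexed by the sampled rows.

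Next I would assemble the two ingredients that a Chernoff estimate requires. Because $T$ is a uniformly random subset of size $l$ of $\{1,\dots,n\}$, each index is included with marginal probability $l/n$, so $\mathbb{E}[\sum_{j \in T}\fw_j\rH{\fw_j}] = \frac{l}{n}\sum_{j=1}^n \fw_j\rH{\fw_j} = \frac{l}{n}\rH{\fW}\fW = \frac{l}{n}\I{k}$, using that $\fW$ has orthonormal columns; hence $\mu_{\min} := \lambda_{\min}(\mathbb{E}[\sum_{j\in T}\fw_j\rH{\fw_j}]) = l/n$. Each summand is rank-one and positive semidefinite with $\lambda_{\max}(\fw_j\rH{\fw_j}) = \|\fw_j\|_2^2 = \|\rT{\fe_j}\fW\|_2^2 \le M/n =: R$ by the definition of $M$.

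Then I would apply the lower-tail matrix Chernoff bound, which for such a sum reads $\mathbb{P}\{\lambda_{\min} \le (1-\delta)\mu_{\min}\} \le k\,[e^{-\delta}/(1-\delta)^{1-\delta}]^{\mu_{\min}/R}$. Here $\mu_{\min}/R = l/M$, and the hypothesis $\alpha M \log(k) \le l$ gives $l/M \ge \alpha\log(k)$; since the base $e^{-\delta}/(1-\delta)^{1-\delta} \le 1$ on $[0,1)$, enlarging the exponent only increases the right-hand side, so it may be replaced by $\alpha\log(k)$, yielding exactly $\mathcal{P}_\text{f}$. Finally, on the complementary event the inequality $\lambda_{\min}(\sum_{j \in T}\fw_j\rH{\fw_j}) > (1-\delta)l/n$ is equivalent to $\sigma_k(\rT{\fR}\fW) > \sqrt{(1-\delta)l/n}$, which gives \Cref{singbound} with probability at least $1 - \mathcal{P}_\text{f}$.

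The hard part will be the dependence introduced by sampling without replacement: the summands $\fw_j\rH{\fw_j}$ for $j \in T$ are not independent, so the classical matrix Chernoff inequality for independent summands does not apply verbatim. I would handle this exactly as in \cite{Tropp2011Ana}, using that the matrix cumulant generating function of a sum over a without-replacement sample is dominated by that of the corresponding independent (with-replacement) sample; this domination lets the Chernoff estimate pass through unchanged, and it is precisely this step that makes the present statement a mere reformulation of \cite[Lemma 3.2]{Tropp2011Ana}.
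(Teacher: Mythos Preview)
Your argument is correct and is precisely the matrix-Chernoff route that underlies \cite[Lemma~3.2]{Tropp2011Ana}, which is all the paper invokes here (it omits the proof and merely remarks that dropping the upper-tail half of Tropp's two-sided bound yields the stated failure probability). One wording slip: when the base $e^{-\delta}/(1-\delta)^{1-\delta}\in(0,1]$, it is \emph{decreasing} the exponent from $l/M$ to the smaller value $\alpha\log(k)$ that enlarges the bound, not the other way around; your conclusion is right, only that sentence is inverted.
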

Compared to Lemma 3.2 from \cite{Tropp2011Ana}, we removed the bound on the largest singular value (which will not be needed for proving the error bounds/quasi-optimality) to improve the bound for the failure probability.  
The next lemma is a variation of \cite[Lemma 3.4]{Tropp2011Ana}. 
\begin{lemma}[Row norms \cite{Tropp2011Ana}] \label{lemma2} Consider $\fV\in \Cn^{n \times k}$ with orthonormal columns, $\fD \in \Cn^{n\times n}$ diagonal, with diagonal entries that are independent random variables uniformly
distributed on the complex unit circle,
and $\fF \in \Cn^{n\times n}$ a unitary discrete Fourier transform (DFT). Then, $\fF\fD\fV\in \R^{n\times k}$ has orthonormal columns, and for $\beta \geq 1$ it holds that the probability
$$\mathcal{P} \bigg\{\max\limits_{j=1,...,n}\vert\vert\rT{\fe_j}(\fF\fD\fV )\vert\vert_2\geq
\sqrt{\frac{k}{n}}+ \sqrt{\frac{8 \log(\beta n)}{n}}\bigg\}\leq \frac{1}{\beta}.$$ 
\end{lemma}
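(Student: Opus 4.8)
The statement has two parts — that $\fF\fD\fV$ has orthonormal columns and the concentration bound on its largest row norm — and I would dispatch the first immediately. Since $\fF$ is unitary and $\fD$ is diagonal with unit-modulus diagonal entries, $\fD$ is unitary as well, so $\fF\fD$ is unitary and $(\fF\fD\fV)^{\textsf{H}}(\fF\fD\fV)=\rH{\fV}\rH{\fD}\rH{\fF}\fF\fD\fV=\rH{\fV}\fV=\I{k}$; hence orthonormality of the columns of $\fV$ is preserved (the codomain should read $\Cn^{n\times k}$). For the main part I would fix a row index $j$ and write the $j$-th row as a Steinhaus sum. Denoting the diagonal entries of $\fD$ by $d_1,\dots,d_n$ (independent, uniform on the complex unit circle) and the entries of $\fF$ by $F_{ji}$, one has $\rT{\fe_j}\fF\fD\fV=\sum_{i=1}^n d_i\,\fu_i$, where $\fu_i\in\Cn^{k}$ is the deterministic vector equal to $F_{ji}$ times the $i$-th row of $\fV$ (transposed to a column). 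Note $\vert\vert\fu_i\vert\vert_2^2=\vert F_{ji}\vert^2\,\vert\vert\rT{\fe_i}\fV\vert\vert_2^2=\frac{1}{n}\vert\vert\rT{\fe_i}\fV\vert\vert_2^2$, using $\vert F_{ji}\vert=1/\sqrt{n}$ for the unitary DFT.

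The $\sqrt{k/n}$ term I would obtain from the second moment. Using $\mathbb{E}[d_i\overline{d_{i'}}]=\delta_{ii'}$ for independent mean-zero unit-modulus variables, a direct computation gives
$$\mathbb{E}\big[\vert\vert\rT{\fe_j}\fF\fD\fV\vert\vert_2^2\big]=\sum_{i=1}^n \vert F_{ji}\vert^2\,\vert\vert\rT{\fe_i}\fV\vert\vert_2^2=\frac{1}{n}\vert\vert\fV\vert\vert_F^2=\frac{k}{n},$$
where $\vert\vert\fV\vert\vert_F^2=k$ because $\fV$ has $k$ orthonormal columns. By Jensen's inequality this yields $\mathbb{E}\big[\vert\vert\rT{\fe_j}\fF\fD\fV\vert\vert_2\big]\le\sqrt{k/n}$, which accounts for the first summand in the claimed bound.

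The deviation term I would produce by concentration of measure. The map $g(d_1,\dots,d_n):=\vert\vert\sum_i d_i\fu_i\vert\vert_2$ is convex, and Lipschitz in the $d_i$: for a perturbation $\fdelta\in\Cn^n$ one has $\vert g(d+\fdelta)-g(d)\vert\le\vert\vert\sum_i\delta_i\fu_i\vert\vert_2=\vert\vert\fU\fdelta\vert\vert_2\le\vert\vert\fU\vert\vert_2\,\vert\vert\fdelta\vert\vert_2$, where $\fU:=[\fu_1,\dots,\fu_n]\in\Cn^{k\times n}$. A short computation using $\vert F_{ji}\vert^2=1/n$ and $\rH{\fV}\fV=\I{k}$ shows $\fU\rH{\fU}=\frac{1}{n}\I{k}$, so $\vert\vert\fU\vert\vert_2=1/\sqrt{n}$ and $g$ is $(1/\sqrt{n})$-Lipschitz. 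Invoking a sub-Gaussian concentration inequality for convex Lipschitz functions of independent unit-circle (Steinhaus) variables — the technical input inherited from \cite{Tropp2011Ana} — and combining with $\mathbb{E}[g]\le\sqrt{k/n}$ gives, for $t\ge 0$,
$$\mathcal{P}\big\{\vert\vert\rT{\fe_j}\fF\fD\fV\vert\vert_2\ge\sqrt{k/n}+t\big\}\le\exp\!\Big(-\frac{n t^2}{8}\Big).$$
Choosing $t=\sqrt{8\log(\beta n)/n}$ makes the right-hand side equal to $1/(\beta n)$, and a union bound over the $n$ rows $j=1,\dots,n$ yields failure probability at most $n\cdot\frac{1}{\beta n}=1/\beta$, which is exactly the asserted bound.

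The main obstacle is the concentration step: one must justify the sub-Gaussian tail with the specific constant giving the factor $8$ for functions of \emph{complex} Steinhaus variables rather than real signs, and match the Lipschitz constant $1/\sqrt{n}$ to the deviation $t$ so that the per-row tail is precisely $1/(\beta n)$ and the union bound closes at $1/\beta$. The orthonormality and the expectation computation are routine; the care lies in adapting Tropp's real Rademacher argument to the unit-circle setting while tracking the constant.
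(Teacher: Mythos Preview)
Your proposal is correct and follows precisely the approach the paper invokes: the paper's own proof simply observes that $\fF$ is unitary and $\fD$ has unit-modulus diagonal entries and then defers to the proof of Lemma~3.3 in \cite{Tropp2011Ana}, whereas you spell out that very argument (Steinhaus-sum representation of each row, second-moment bound $\mathbb{E}[\|\rT{\fe_j}\fF\fD\fV\|_2^2]=k/n$, Lipschitz constant $1/\sqrt{n}$ from $\fU\rH{\fU}=\tfrac{1}{n}\I{k}$, sub-Gaussian concentration, and a union bound over the $n$ rows). Your flagged ``main obstacle''---matching the constant $8$ when passing from real Rademacher to complex Steinhaus variables---is exactly the only point the paper glosses over by citing Tropp; otherwise there is no substantive difference between your route and the paper's.
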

\begin{proof}
The proof follows identically to the proof of Lemma 3.3 provided in \cite{Tropp2011Ana} because $\fF$ is unitary and $\fD$ is diagonal with diagonal elements which have absolute value 1. 
\end{proof}
By an identical argument as in the proof of \cite[Theorem 3.1]{Tropp2011Ana}, one can show the following probabilistic bounds on the singular values of a matrix with orthonormal columns multiplied by an SRFT. 
\begin{proposition}[The SRFT preserves geometry \cite{Tropp2011Ana}] \label{propSRFT}
Consider $\fV \in \Cn^{n\times k}$ with orthonormal columns. Select a parameter $l$ that satisfies
$$4[\sqrt{k} + \sqrt{8 \log(kn)} ]^2 \log(k) \leq l \leq n.$$
Draw an SRFT matrix $\fOmega\in\R^{n \times l}$. Then, with probability $1-2/k$ it holds that
$$\frac{1}{\sqrt{6}} \leq \sigma_k(\rH{\fOmega}\fV).$$\end{proposition}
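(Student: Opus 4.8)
The plan is to peel the SRFT apart into its three factors and feed them successively into the two preceding lemmas, mirroring the argument for \cite[Theorem 3.1]{Tropp2011Ana} that is promised in the surrounding text. Writing the SRFT as $\fOmega = \sqrt{n/l}\,\fD\fF\fR$ as in \Cref{def:SRFT} and using that $\fR$ is a real selection matrix, I would first record the identity
\[
\rH{\fOmega}\fV = \sqrt{\tfrac{n}{l}}\,\rT{\fR}\,\rH{\fF}\,\rH{\fD}\,\fV = \sqrt{\tfrac{n}{l}}\,\rT{\fR}\fW, \qquad \fW := \rH{\fF}\,\rH{\fD}\,\fV.
\]
The key observation is that $\rH{\fF}$ is again a unitary DFT matrix (the inverse transform) and $\rH{\fD}$ is again diagonal with unit-modulus diagonal entries, so $\fW$ is exactly of the form $\fF\fD\fV$ treated in \Cref{lemma2}; in particular $\fW$ has orthonormal columns and its row norms obey the bound stated there. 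Consequently $\sigma_k(\rH{\fOmega}\fV) = \sqrt{n/l}\,\sigma_k(\rT{\fR}\fW)$, which reduces the proposition to a lower bound on $\sigma_k(\rT{\fR}\fW)$ for the row-sampled matrix $\rT{\fR}\fW$, precisely the quantity controlled by \Cref{lemma1}.

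Next I would control the coherence quantity $M = n\max_{j}\|\rT{\fe_j}\fW\|_2^2$ entering \Cref{lemma1}. Applying \Cref{lemma2} with $\beta = k$ gives, with probability at least $1-1/k$,
\[
M \le n\left(\sqrt{\tfrac{k}{n}} + \sqrt{\tfrac{8\log(kn)}{n}}\right)^2 = \left(\sqrt{k} + \sqrt{8\log(kn)}\right)^2 .
\]
On this event the hypothesis $l \ge 4[\sqrt{k} + \sqrt{8\log(kn)}]^2\log(k)$ guarantees $\alpha M\log(k) \le l \le n$ with the choice $\alpha = 4$, i.e.\ the sampling condition of \Cref{lemma1} is met. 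Since the selection matrix $\fR$ is drawn independently of $\fD$ (hence of $\fW$), I would condition on $\fD$, regard $\fW$ as fixed, and apply \Cref{lemma1} over the randomness of $\fR$ with $\alpha = 4$ and a to-be-chosen $\delta \in [0,1)$, obtaining $\sigma_k(\rT{\fR}\fW) \ge \sqrt{(1-\delta)l/n}$ with conditional failure probability at most $\mathcal{P}_\text{f} = k\,[e^{-\delta}/(1-\delta)^{1-\delta}]^{4\log(k)}$.

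It remains to pin down $\delta$ so that both the target singular-value constant and the target probability come out right, and this numerical balancing is the only real obstacle. Feeding the bound back through the factor $\sqrt{n/l}$ yields $\sigma_k(\rH{\fOmega}\fV) \ge \sqrt{n/l}\cdot\sqrt{(1-\delta)l/n} = \sqrt{1-\delta}$, so to hit the stated constant $1/\sqrt{6}$ I would take $\delta = 5/6$, giving $\sqrt{1-\delta} = 1/\sqrt{6}$. The delicate part is verifying that this same $\delta$ drives the failure probability down far enough: I would check the scalar inequality $e^{-\delta}/(1-\delta)^{1-\delta} \le e^{-1/2}$ at $\delta = 5/6$ (equivalently $\delta + (1-\delta)\log(1-\delta) \ge 1/2$), which forces $\mathcal{P}_\text{f} \le k\,e^{-2\log(k)} = 1/k$. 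Finally, a union bound over the two failure events, accounting for the row-sampling step being conditioned on $\fD$ so that its failure is weighed on the event where the coherence bound holds, caps the total failure probability at $P(\text{coherence fails}) + \mathcal{P}_\text{f} \le 1/k + 1/k = 2/k$, giving the claimed success probability $1-2/k$. Everything outside the Chernoff-factor inequality at $\delta = 5/6$ is bookkeeping of constants together with the identification of $\fW$ with the hypotheses of \Cref{lemma2}.
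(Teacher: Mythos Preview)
Your proof is correct and follows exactly the approach the paper sketches: you set $\beta = k$ in \Cref{lemma2} and $\alpha = 4$, $\delta = 5/6$ in \Cref{lemma1}, which are precisely the parameter choices the paper indicates, and you carry out the bookkeeping (including the Chernoff-factor verification and the union bound) that the paper leaves implicit. The only point worth noting is that your identification of $\fW = \rH{\fF}\rH{\fD}\fV$ with the setting of \Cref{lemma2} is legitimate because the inverse DFT $\rH{\fF}$ is unitary with entries of modulus $1/\sqrt{n}$ and $\rH{\fD}$ has the same distribution as $\fD$, so the proof of \Cref{lemma2} applies verbatim.
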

\begin{proof}
The proof follows similar to the proof of Theorem 3.2 in \cite{Tropp2011Ana} (by setting $\beta = k$ in \Cref{lemma2} and $\alpha = 4, \delta = 5/6$ in \Cref{lemma1}). However the bound on the failure probability there can be sharpened because the bound on the largest singular value will not be needed to prove the error bounds from \Cref{thmbound,thmadvbound}.
\end{proof}

Lastly, we recall a deterministic error bound \cite[Theorem 9.2]{Halko2011} on the projection error of a randomized rank-$l$ approximation.

\begin{proposition}[Deterministic error bound \cite{Halko2011}]\label{Deterr} 
Consider $\fA \in \Cn^{m \times n}$ with singular value decomposition $\fA = \fU \fSigma \rH{\fV}$, and fix $k \geq 0$. Choose a matrix $\fOmega\in \Cn^{n \times l}$,
and construct the sample matrix $\fY = \fA\fOmega \in \Cn^{m\times l}$. Partition $\fSigma = \blkdiag(\fSigma_1, \fSigma_2)$ with $\fSigma_1 \in \R^{k \times k}, \fSigma_2 \in \R^{(m-k) \times (n-k)}, m \geq k, n \geq k$ and $\fV  = [\fV_1, \fV_2]$ with $\fV_1 \in \Cn^{n \times k}, \fV_2 \in \Cn^{n \times (n-k)}$ and define $\fOmega_1 = \rH{\fV_1}\fOmega$ and $\fOmega_2 = \rH{\fV_2}\fOmega$. Assuming that $\fOmega_1$ has full row rank, the approximation
error satisfies 
$$\vert\vert(\I{m}-\fP_\fY )\fA\vert\vert^2_\mathrm{F} \leq \vert\vert\fSigma_2\vert\vert^2_\mathrm{F} + \vert\vert\fSigma_2\fOmega_2\fOmega_1^\dagger\vert\vert^2_\mathrm{F} \leq \vert\vert\fSigma_2\vert\vert^2_\mathrm{F} + \vert\vert\fSigma_2\vert\vert^2_\mathrm{F}\vert\vert\fOmega_2\vert\vert_2^2\vert\vert\fOmega_1^\dagger\vert\vert_2^2,$$
where $(\cdot)^\dagger$ indicates the pseudo-inverse and  $\fP_\fY$ denotes the orthogonal projector on $\mathrm{range}(\fY)$ i.e., $\fP_\fY = \UY\rH{\UY}$ with $\UY$ the matrix of left singular vectors of $\fY$ corresponding to nonzero singular values.
\end{proposition}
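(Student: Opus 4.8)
The plan is to follow the classical deterministic argument of \cite{Halko2011}: first reduce to a diagonal data matrix via unitary invariance of the Frobenius norm, and then bound the residual by \emph{enlarging} the comparison subspace to the range of a cleverly chosen surrogate matrix.

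First I would exploit that the Frobenius norm is invariant under left and right multiplication by unitary matrices. Inserting the SVD gives $\fY = \fA\fOmega = \fU\fSigma\rH\fV\fOmega = \fU\widehat{\fY}$ with $\widehat{\fY} := \fSigma\,\rH\fV\fOmega = \fSigma[\fOmega_1;\fOmega_2] = [\fSigma_1\fOmega_1; \fSigma_2\fOmega_2]$, where I wrote $\rH\fV\fOmega=[\fOmega_1;\fOmega_2]$ in stacking notation. Since $\fU$ is unitary, $\mathrm{range}(\fY)=\fU\,\mathrm{range}(\widehat{\fY})$ and hence $\fP_\fY = \fU\fP_{\widehat{\fY}}\rH\fU$, so that $(\I{m}-\fP_\fY)\fA = \fU(\I{m}-\fP_{\widehat{\fY}})\fSigma\rH\fV$. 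Peeling off the unitaries $\fU$ and $\rH\fV$ reduces the claim to bounding $\|(\I{m}-\fP_{\widehat{\fY}})\fSigma\|_\mathrm{F}$.

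Next comes the decisive construction. Using that $\fOmega_1$ has full row rank, so that $\fOmega_1\fOmega_1^\dagger = \I{k}$, I would introduce the surrogate $\fZ := \widehat{\fY}\,\fOmega_1^\dagger = [\fSigma_1; \fSigma_2\fOmega_2\fOmega_1^\dagger]$. As $\fZ$ arises from $\widehat{\fY}$ by right multiplication, $\mathrm{range}(\fZ)\subseteq\mathrm{range}(\widehat{\fY})$, and enlarging the projection subspace can only decrease the residual, giving $\|(\I{m}-\fP_{\widehat{\fY}})\fSigma\|_\mathrm{F}\le\|(\I{m}-\fP_{\fZ})\fSigma\|_\mathrm{F}$. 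Because the orthogonal projection $\fP_\fZ\fSigma$ is the best Frobenius-norm approximation of $\fSigma$ from $\mathrm{range}(\fZ)$, I may further bound $\|(\I{m}-\fP_\fZ)\fSigma\|_\mathrm{F}=\min_{\fW}\|\fSigma-\fZ\fW\|_\mathrm{F}\le\|\fSigma-\fZ\fW\|_\mathrm{F}$ for the convenient flattening choice $\fW=[\I{k},\fzero]$. With this $\fW$ the product $\fZ\fW$ reproduces the $\fSigma_1$-block exactly, so that $\fSigma-\fZ\fW$ retains only the two nonzero blocks $-\fSigma_2\fOmega_2\fOmega_1^\dagger$ and $\fSigma_2$; reading off its squared Frobenius norm yields the first inequality $\|\fSigma_2\|_\mathrm{F}^2 + \|\fSigma_2\fOmega_2\fOmega_1^\dagger\|_\mathrm{F}^2$. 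The second inequality then follows from submultiplicativity via $\|\fSigma_2\fOmega_2\fOmega_1^\dagger\|_\mathrm{F}\le\|\fSigma_2\|_\mathrm{F}\|\fOmega_2\|_2\|\fOmega_1^\dagger\|_2$.

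I expect the main obstacle to be precisely this construction step: recognizing that one should pass from $\widehat{\fY}$ to the surrogate $\fZ=\widehat{\fY}\fOmega_1^\dagger$, whose range is still contained in $\mathrm{range}(\widehat{\fY})$, and then selecting the flattening $\fW=[\I{k},\fzero]$ that collapses the residual to exactly the desired block form. The reduction by unitary invariance and the closing submultiplicativity estimate are routine by comparison. A minor point demanding care is that $\fOmega_1\fOmega_1^\dagger=\I{k}$ relies on the full-row-rank hypothesis on $\fOmega_1$, which is exactly where that assumption is consumed.
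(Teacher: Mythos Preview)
Your argument is correct and is exactly the classical proof of \cite[Theorem~9.1]{Halko2011}: reduce by unitary invariance to the diagonal matrix $\fSigma$, pass from $\widehat{\fY}$ to the surrogate $\fZ=\widehat{\fY}\fOmega_1^\dagger$ whose range is contained in that of $\widehat{\fY}$, and then flatten with $\fW=[\I{k},\fzero]$ to obtain the block residual $\|\fSigma_2\|_\mathrm{F}^2+\|\fSigma_2\fOmega_2\fOmega_1^\dagger\|_\mathrm{F}^2$. Note that the present paper does not give its own proof of this proposition---it is merely recalled from \cite{Halko2011}---so your write-up in fact supplies the missing details in line with the original source.
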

Using these lemmas and propositions, we now prove that the rcSVD procedure yields a basis that is at most a constant factor worse than the optimal cSVD procedure with a constant that is monotonically decreasing in $l$.  

\begin{theorem}\label{thmbound}
If $4(\sqrt{k}+\sqrt{8\log(k\ns)})^2 \log(k) \leq l \leq \ns $, then the rcSVD basis matrix $\VrcSVD \in \R^{2N \times 2k}$ satisfies with failure probability $2/k$
\begin{align}	
\vert\vert\Xs - \VrcSVD \rT{\VrcSVD}\Xs\vert\vert_F^2 \leq C \sum\limits_{j \geq k+1} \sigma_j^2 = C\vert\vert\Xs - \VcSVD \rT{\VcSVD}\Xs\vert\vert_F^2
\end{align} 
with $\sigma_j,j = 1,..,\ns$ the non-increasing sequence of singular values of the complex snapshot matrix 
$\Xc = \fQ + \im \fP,$ $\Xs\in \R^{2N \times \ns}$ and $C = (\sqrt{1 + 6\ns/l}+1)^2$.
\end{theorem}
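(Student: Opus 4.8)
The plan is to first exploit the block structure of the symplectic lift $\mathcal{A}$ to transfer the problem from the real symplectic setting to the complex setting, where the randomized-SVD tools apply. For any $\fU = \VQ + \im\VP$ with orthonormal columns and associated ortho-symplectic matrix $\VE = \mathcal{A}(\fU)$, a direct computation with $\Xs = [\Qs; \Ps]$ and $\Xc = \Qs + \im\Ps$ shows that the upper and lower blocks of $\VE\rT{\VE}\Xs$ are exactly the real and imaginary parts of $\fU\rH{\fU}\Xc$, whence
\begin{align*}
\vert\vert\Xs - \VE\rT{\VE}\Xs\vert\vert_F^2 = \vert\vert\Xc - \fU\rH{\fU}\Xc\vert\vert_F^2 .
\end{align*}
Specializing to $\fU = \Ucr$ (so $\VE = \VrcSVD$) reduces the claim to bounding the complex rank-$k$ projection error $\vert\vert\Xc - \Ucr\rH{\Ucr}\Xc\vert\vert_F^2$ by $C\sum_{j\geq k+1}\sigma_j^2$.

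\textbf{Step 2 (splitting the complex error).} Let $\fP_\fY = \UY\rH{\UY}$ be the orthogonal projector onto $\mathrm{range}(\fY)$ and let $\fB_k$ denote the best rank-$k$ approximation of $\fB = \rH{\UY}\Xc$. Steps 5--7 of \Cref{alg:rCSVD} give $\Ucr\rH{\Ucr}\Xc = \UY\fB_k$, i.e.\ $\Ucr\rH{\Ucr}\Xc$ is the best rank-$k$ approximation of $\Xc$ with column space inside $\mathrm{range}(\UY)$. I would then apply the triangle inequality
\begin{align*}
\vert\vert\Xc - \Ucr\rH{\Ucr}\Xc\vert\vert_F \leq \vert\vert(\I{N}-\fP_\fY)\Xc\vert\vert_F + \vert\vert\fP_\fY\Xc - \UY\fB_k\vert\vert_F ,
\end{align*}
which separates the error of capturing the dominant range of $\Xc$ by $\fY$ from the error of the rank-$k$ truncation inside that range. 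This split is exactly what the refined truncation (the additional SVD of $\fB$) makes available and is the reason the analysis fails for the original, unrefined algorithm.

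\textbf{Step 3 (bounding both terms).} The truncation term is immediate: $\vert\vert\fP_\fY\Xc - \UY\fB_k\vert\vert_F = \vert\vert\fB - \fB_k\vert\vert_F = (\sum_{j> k}\sigma_j(\fB)^2)^{1/2}$, and since $\rH{\UY}$ has operator norm one, $\sigma_j(\fB)\leq\sigma_j(\Xc)=\sigma_j$, so this is at most $(\sum_{j\geq k+1}\sigma_j^2)^{1/2}$. For the range-capture term I would invoke the deterministic bound \Cref{Deterr} with $\fA = \Xc$, yielding $\vert\vert(\I{N}-\fP_\fY)\Xc\vert\vert_F^2 \leq (1 + \vert\vert\fOmega_2\vert\vert_2^2\,\vert\vert\fOmega_1^\dagger\vert\vert_2^2)\sum_{j\geq k+1}\sigma_j^2$, with $\fOmega_1 = \rH{\fV_1}\fOmega$ and $\fOmega_2 = \rH{\fV_2}\fOmega$ formed from the right singular vectors of $\Xc$. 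Applying \Cref{propSRFT} to the orthonormal columns of $\fV_1\in\Cn^{\ns\times k}$ (whose row dimension $\ns$ matches the hypothesis on $l$) gives $\sigma_k(\fOmega_1)\geq 1/\sqrt{6}$, i.e.\ $\vert\vert\fOmega_1^\dagger\vert\vert_2^2\leq 6$, with failure probability at most $2/k$; and the SRFT factorization $\fOmega = \sqrt{\ns/l}\,\fD\fF\fR$ (with $\fD\fF\fR$ having orthonormal columns) gives $\vert\vert\fOmega_2\vert\vert_2 \leq \vert\vert\fOmega\vert\vert_2 = \sqrt{\ns/l}$. Hence $\vert\vert(\I{N}-\fP_\fY)\Xc\vert\vert_F \leq \sqrt{1+6\ns/l}\,(\sum_{j\geq k+1}\sigma_j^2)^{1/2}$. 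Inserting both estimates into the triangle inequality and squaring yields precisely $C = (\sqrt{1+6\ns/l}+1)^2$.

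\textbf{Main obstacle.} The conceptual crux is Step 1, the identity $\vert\vert\Xs - \VE\rT{\VE}\Xs\vert\vert_F^2 = \vert\vert\Xc - \fU\rH{\fU}\Xc\vert\vert_F^2$: it is what lets \Cref{Deterr,propSRFT} act on $\Xc$ rather than on the real snapshot matrix $\Xs$. The remaining care is in keeping the in-subspace truncation error separate from the range-capture error in Steps 2--3. I would run the argument for $\qpow = 0$ so that $\fY = \Xc\fOmega$ and \Cref{Deterr} applies verbatim, deferring the effect of power iterations to the sharper bound of the next section. One could replace the triangle inequality by the Pythagorean identity $\vert\vert\Xc-\Ucr\rH{\Ucr}\Xc\vert\vert_F^2 = \vert\vert(\I{N}-\fP_\fY)\Xc\vert\vert_F^2 + \vert\vert\fB-\fB_k\vert\vert_F^2$ to obtain the slightly smaller constant $2+6\ns/l$; the stated $C$ follows the more standard triangle-inequality route.
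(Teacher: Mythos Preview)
Your proof is correct and follows essentially the same route as the paper: reduce to the complex projection error via the block-structure identity $\|\Xs-\VE\rT{\VE}\Xs\|_F=\|\Xc-\fU\rH{\fU}\Xc\|_F$, split via the triangle inequality, bound the range-capture term with \Cref{Deterr} together with the SRFT estimates $\|\fOmega_2\|_2^2\le \ns/l$ and $\|\fOmega_1^\dagger\|_2^2\le 6$ from \Cref{propSRFT}, and bound the in-subspace truncation term by the optimal tail. The only cosmetic difference is in that last step: the paper bounds $\|\fP_\fY\Xc-\Ucr\rH{(\Ucr)}\Xc\|_F$ by comparing the best rank-$k$ approximation of $\fC=\UY\fB$ against the specific rank-$k$ competitor $\UY\rH{\UY}\Xc_{(k)}$ and then uses non-expansiveness of $\fP_\fY$, whereas you use the singular-value interlacing $\sigma_j(\rH{\UY}\Xc)\le\sigma_j(\Xc)$ directly---both arguments are short and yield the same bound (and your remark about the Pythagorean splitting giving the slightly smaller constant $2+6\ns/l$ is valid).
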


\begin{proof}
First, we recall from \Cref{sec:cSVD} or \cite{Buchfink2019} that each ortho-symplectic matrix $\VE$ has the structure $\VE = [\fE, \TJtN \fE]$ with $\rT \fE \fE = \I{k}$ and $\rT \fE \JtN \fE = \Z{k}$. Thus, it can be represented as
$\VE = \begin{pmatrix}
\VQ & -\VP \\
\VP & \VQ
\end{pmatrix}$
with $\rT\VQ\VQ + \rT\VP\VP = \I{k}$, $\rT\VP\VQ = \rT\VQ\VP$.
\\

The first step of the proof is to show, that for $\VE$ ortho-symplectic
\begin{align}
\vert\vert\Xs - \VE \rT{\VE}\Xs\vert\vert_F^2 = \vert\vert\Xc - \Uc\rH{\Uc} \Xc \vert\vert_F^2
\end{align}
with $\Uc = \VQ + \im \VP$ and $\Xc = \fQ + \im \fP\in \Cn^{N \times \ns}.$ This can be seen as follows
\begingroup
\allowdisplaybreaks[3]
\begin{align*}
\vert\vert&\Xs - \VE \rT{\VE}\Xs\vert\vert_F^2 = \left|\left|\Xs - \VE
\begin{pmatrix}
\rT{\VQ}\fQ + \rT{\VP}\fP \\
-\rT{\VP}\fQ + \rT{\VQ}\fP
\end{pmatrix} 
\right|\right|_F^2 \\ 
&= \left|\left|\Xs - \begin{pmatrix}
\VQ & -\VP \\
\VP & \VQ
\end{pmatrix}
\begin{pmatrix}
\rT{\VQ}\fQ + \rT{\VP}\fP \\
-\rT{\VP}\fQ + \rT{\VQ}\fP
\end{pmatrix} 
\right|\right|_F^2 \\ 
&= \left|\left|\begin{pmatrix}
\fQ \\
\fP
\end{pmatrix} - 
\begin{pmatrix}
\VQ\rT{\VQ}\fQ + \VQ\rT{\VP}\fP +\VP\rT{\VP}\fQ - \VP\rT{\VQ}\fP\\
\VP\rT{\VQ}\fQ + \VP\rT{\VP}\fP-\VQ\rT{\VP}\fQ + \VQ\rT{\VQ}\fP
\end{pmatrix} 
\right|\right|_F^2 \\ 
&= \vert\vert\fQ - (\VQ\rT{\VQ}\fQ + \VQ\rT{\VP}\fP +\VP\rT{\VP}\fQ - \VP\rT{\VQ}\fP)\vert\vert_F^2  \\
&+ \vert\vert\fP - (\VP\rT{\VQ}\fQ + \VP\rT{\VP}\fP-\VQ\rT{\VP}\fQ + \VQ\rT{\VQ}\fP)\vert\vert_F^2 \\ 
&= \vert\vert\fQ - (\VQ\rT{\VQ}\fQ + \VQ\rT{\VP}\fP +\VP\rT{\VP}\fQ - \VP\rT{\VQ}\fP) \\
&+ \im(\fP - (\VP\rT{\VQ}\fQ + \VP\rT{\VP}\fP-\VQ\rT{\VP}\fQ + \VQ\rT{\VQ}\fP)) \vert\vert_F^2 \\ 
&= \vert\vert\Xc - (\VQ + \im \VP)(\rT{\VQ}\fQ + \rT{\VP}\fP + \im(-\rT{\VP}\fQ + \rT{\VQ}\fP)) \vert\vert_F^2 \\ 
&= \vert\vert\Xc - (\VQ + \im \VP)(\rT{\VQ} -\im\rT{\VP})(\fQ + \im \fP) \vert\vert_F^2 \\ 
&= \vert\vert\Xc - \Uc\rH{\Uc} \Xc \vert\vert_F^2.
\end{align*}
\endgroup
If we insert the randomized basis matrix $\UY \in \mathbb{C}^{N \times l}$ from \Cref{alg:rCSVD} for $\Uc$, in order to bound $\vert\vert\Xc - \UY\rH{\UY} \Xc \vert\vert_F^2$ we can make use of the second bound from \Cref{Deterr}. \\

The next step is to bound the increase in the projection error if we truncate to a basis of size $k$ (see \cite[Section 9.4]{Halko2011}). 
Note that this part of the proof works only with the refined method presented in \Cref{alg:rCSVD} and not with the initial version in \cite{Herkert2023rand}. Let $\Ucr$ be the randomized rank-$k$ basis matrix from \Cref{alg:rCSVD}.
First, we split the error using the triangle inequality: 

\begin{align}
\vert\vert\Xc - \Ucr\rH{(\Ucr)}&\Xc\vert\vert_F = \vert\vert\Xc - \UY\rH{\UY}\Xc + \UY\rH{\UY}\Xc -  \Ucr\rH{(\Ucr)}\Xc\vert\vert_F \\
&\leq \vert\vert\Xc - \UY\rH{\UY}\Xc\vert\vert_F + \vert\vert\UY\rH{\UY}\Xc -  \Ucr\rH{(\Ucr)}\Xc\vert\vert_F. \label{eqn:split}
\end{align}
For $\vert\vert\Xc - \UY\rH{\UY}\Xc\vert\vert_F$ the bounds from \Cref{Deterr} which depend on the random sketching approach can be applied. \\

Next, we define $\fB:=\rH{\UY}\Xc$ according to \Cref{alg:rCSVD} with the singular value decomposition $$\fB = {\fU_\fB} \fSigma_\fB \rH{\fV_\fB},$$ 
 and $${\fU_\fB}_{(k)}{\fSigma_\fB}_{(k)}\rH{{\fV_\fB}_{(k)}} \approx \fB$$ the rank-$k$ truncated SVD of $\fB$, 
 where $${\fU_\fB} \in \Cn^{N \times N}, \fSigma_\fB\in \R^{N \times l}, {\fV_\fB}\in\Cn^{l \times l}$$
and 
$${\fU_\fB}_{(k)} \in \Cn^{N \times k}, {\fSigma_\fB}_{(k)} \in \R^{k \times k}, {\fV_\fB}_{(k)} \in \Cn^{l \times k}.$$
 Using these quantities, we set $$\fC:= \UY \fB  = \UY{\fU_\fB} \fSigma_\fB \rH{\fV_\fB}.$$ Then, $\UY{\fU_\fB} \in \R^{N  \times l}$ has orthonormal columns. Thus, a singular value decomposition $\fC = {\fU_\fC} \fSigma_\fC \rH{\fV_\fC}$ can be formed by constructing $\fU_\fC \in \Cn^{N \times N}$ from extending $\fU_\fY\fU_\fB$ by orthogonal columns, $\fSigma_\fC\in \R^{N \times l} $ from zero padding of $\fSigma_\fB$ and $\rH{\fV_\fC}$ equals $\rH{\fV_\fB}$.
As $\Ucr =\UY{\fU_\fB}_{(k)}$ consists of the first $k$ columns of ${\fU_\fC}$, it follows that $\Ucr\rH{(\Ucr)} \fC=\Ucr {\fSigma_\fB}_{(k)} \rH{{\fV_\fB}_{(k)}} \approx \fC$ is the rank-$k$ truncated SVD of $\fC$.
Furthermore, we have
\begin{align*}
\Ucr\rH{(\Ucr)} \fC&=\Ucr\rH{(\Ucr)} \UY\fB= \Ucr\rH{(\Ucr)}\UY\rH{\UY}\Xc \\
&= \underbrace{\UY{\fU_\fB}_{(k)}}_{= \Ucr}\underbrace{\rH{\fU}_{\fB_{(k)}}\rH{\UY}\UY\rH{\UY}}_{={\rH{\fU}_\fB}_{(k)}\rH{\UY}= \rH {{(\Ucr)}}}\Xc =\Ucr\rH{(\Ucr)}\Xc.
\end{align*}
Let $\Xc_{(k)} = \fU_{\Xc(k)} \fSigma_{\Xc(k)}\rH{\fV_{\Xc(k)}}$ denote the rank-$k$ truncated SVD of $\Xc$. Since the matrix
$\UY\rH{\UY} \fU_{\Xc(k)} \fSigma_{\Xc(k)} \rH{\fV_{\Xc(k)}}$ has at most rank $k$ 
\begin{align}
\vert\vert\underbrace{\UY\rH{\UY}\Xc}_{=\fC} -  &\underbrace{\Ucr\rH{(\Ucr)}\Xc}_{\Ucr\rH{(\Ucr)} \fC}\vert\vert_F= \vert\vert\fC -  \Ucr\rH{(\Ucr)} \fC\vert\vert_F  \\
&\leq\vert\vert\fC -  \UY\rH{\UY} \fU_{\Xc(k)} \fSigma_{\Xc(k)} \rH{\fV_{\Xc(k)}}\vert\vert_F\label{eqn:truncbound1} \\
&= \vert\vert\UY\rH{\UY}\Xc -  \UY\rH{\UY} \fU_{\Xc(k)} \fSigma_{\Xc(k)} \rH{\fV_{\Xc(k)}}\vert\vert_F\\
&= \vert\vert\UY\rH{\UY}(\Xc -  \fU_{\Xc(k)} \fSigma_{\Xc(k)} \rH {\fV}_{\Xc(k)})\vert\vert_F \label{eqn:truncbound2}\\
& \leq \vert\vert\Xc -  \fU_{\Xc(k)} \fSigma_{\Xc(k)} \rH{\fV_{\Xc(k)}}\vert\vert_F \label{eqn:truncbound3}\\
&= \sqrt{\sum\limits_{j \geq k+1} \sigma_j^2}, \label{eqn:truncbound} 
\end{align}
where we used the best-approximation property for obtaining (\ref{eqn:truncbound1}), factoring out for (\ref{eqn:truncbound2}), non-expansiveness of the orthogonal projection $\fU_\fY\rH{\fU}_\fY$ for obtaining (\ref{eqn:truncbound3}) and the definition of the singular values to reach (\ref{eqn:truncbound}).   

Together with \Cref{eqn:split}, \Cref{eqn:truncbound}, \Cref{Deterr} and \Cref{propSRFT}  the above implies that with failure probability $2/k$

\begin{align*} \label{quasi_opt}
\vert\vert\Xs - \VrcSVD &\rT{\VrcSVD}\Xs\vert\vert_F = \vert\vert\Xc - \Ucr\rH{(\Ucr)} \Xc \vert\vert_F \\
&\overset{(\ref{eqn:split})}{\leq} \vert\vert\Xc - \UY\rH{\UY}\Xc\vert\vert_F + \vert\vert\UY\rH{\UY}\Xc -  \Ucr\rH{(\Ucr)}\Xc\vert\vert_F
\\
&\overset{(\ref{eqn:truncbound})}{\leq} \vert\vert\Xc - \UY\rH{\UY}\Xc\vert\vert_F + \sqrt{\sum\limits_{j \geq k+1} \sigma_j^2}\\ 
&\overset{\text{Prop}.\ \ref{Deterr}}{\leq}  \sqrt{\vert\vert\fSigma_2\vert\vert_F^2 + \vert\vert\fSigma_2\vert\vert_F^2\vert\vert\fOmega_2\vert\vert_2^2\vert\vert\fOmega_1^\dagger\vert\vert_2^2} + \sqrt{\sum\limits_{j \geq k+1} \sigma_j^2}\\
&\overset{\text{Prop}.\ \ref{propSRFT}}{\leq} \sqrt{\vert\vert\fSigma_2\vert\vert_F^2 + 6\ns/l\vert\vert\fSigma_2\vert\vert_F^2} + \sqrt{\sum\limits_{j \geq k+1} \sigma_j^2}\\
&= (\sqrt{1 +6\ns/l} + 1) \sqrt{\sum\limits_{j \geq k+1} \sigma_j^2} 
\end{align*}

if $4(\sqrt{k}+\sqrt{8\log(k\ns)})^2 \log(k) \leq l \leq \ns.$ Here, in the second last inequality we bound 
$$\vert\vert\fOmega_2\vert\vert_2^2 = \vert\vert\rH{\fV_2}\fOmega\vert\vert_2^2  \leq \vert\vert\rH{\fV_2}\vert\vert_2^2\vert\vert\fOmega\vert\vert_2^2 = \vert\vert\fOmega\vert\vert_2^2\leq \ns/l$$ 
as, up to the scaling factor $\sqrt{\ns/l}$, the SFRT matrix $\fOmega =\sqrt{\ns/l}\fD\fF\fR$ (\Cref{def:SRFT}) has orthonormal columns, i.e., $\vert\vert\fD\fF\fR\vert\vert_2 = 1$. Moreover, we bound $\vert\vert\fOmega_1^\dagger\vert\vert_2$ via
$$\vert\vert\fOmega_1^\dagger\vert\vert_2 = \sigma_1(\fOmega_1^\dagger) = \sigma_k (\fOmega_1)^{-1} = \sigma_k (\rH{\fOmega_1})^{-1}  = \sigma_k (\rH{\fOmega}\fV_1)^{-1} \overset{\text{Prop}.\ \ref{propSRFT}}{\leq} \sqrt{6}.$$

\end{proof}
From this bound we obtain a better understanding of the method: We know that we are at most a factor $(\sqrt{1 +6\ns/l} + 1)$ worse than the optimal solution (in the set of ortho-symplectic matrices) which gives the method a stronger theoretical foundation. Furthermore, it tells us how to choose hyperparameters to obtain theoretical guarantees: Given number of snapshots $\ns$ and a target rank $k$ choose $\povs$ such that $$4(\sqrt{k}+\sqrt{8\log(k\ns)})^2 \log(k) - k\leq \povs \leq \ns-k.$$ 

\section{Influence of Power Iterations on the Error (Bound)}\label{sec:powinfl}
In this section, we analyze how the choice of the number of power iterations $\qpow$ influences the error (bound) in interplay with the oversampling parameter $\povs.$ First, we reformulate Theorem 4.4 from \cite{Gu2015} for complex matrices: 
\begin{proposition}[\cite{Gu2015}]\label{propadvdetbound}
Consider $\fA\in \Cn^{m\times n}$, $\mathrm{rank}(\fA) = \min(m,n)$ and $\fQ$ a matrix with orthonormal columns spanning the range of $\fY = \fA(\rH{\fA}\fA)^{\qpow}\fOmega\in \Cn^{m\times l}, l = k +\povs \leq n$ with a sketching matrix $ \fOmega\in \Cn^{n \times l}$. Let $\mathrm{rank}(\fY$) = $l$.
Then, for any $s$ with $0\leq s\leq l-k$ holds \footnote{Note that due to preventing a notation clash we renamed the parameter $p$ from \cite{Gu2015} to $s.$} 
\begin{align}
\vert\vert(\I{m} - \fQ\rH{\fQ})\fA\vert\vert_F &\leq \vert\vert\fA - \fQ\fB_{(k)}\vert\vert_F \\
&\leq \sqrt{ \frac{\alpha^2\vert\vert\fOmega_2\vert\vert_2^2\vert\vert\fOmega_1^\dagger\vert\vert_2^2}{1+\gamma^2\vert\vert\fOmega_2\vert\vert_2^2\vert\vert\fOmega_1^\dagger\vert\vert_2^2}  +\sum\limits_{j \geq k+1} \sigma_j^2}\\
&\leq \sqrt{\alpha^2\vert\vert\fOmega_2\vert\vert_2^2\vert\vert\fOmega_1^\dagger\vert\vert_2^2+\sum\limits_{j\geq k+1} \sigma_j^2} 
\end{align}
with  $\fB_{(k)}$ the rank-$k$ truncated SVD of $\rH \fQ\fA$ and $\alpha = \sqrt k \sigma_{l-s+1}(\frac{\sigma_{l-s+1}}{\sigma_{k}})^{2\qpow},$   $\gamma = \frac{\sigma_{l-s+1}}{\sigma_1}(\frac{\sigma_{l-s+1}}{\sigma_{k}})^{2\qpow}$ where  $\sigma_j,j = 1,..,\ns$ denote the non-increasing sequence of singular values of $\fA$.
The matrices $\fOmega_1, \fOmega_2$ are defined as follows: Let $\hat \fOmega := \rT \fV\fOmega$ and split $\hat \fOmega = \begin{pmatrix}
\fOmega_1\\
\fOmega_2
\end{pmatrix}$ with $\fOmega_1  \in \Cn^{(l-s)\times l}$, $\fOmega_2  \in \Cn^{(n-l+s)\times l}$ where we assume that $\fOmega_1$ has full row rank.
\end{proposition}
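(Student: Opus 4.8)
The statement is the verbatim complex analogue of \cite[Theorem 4.4]{Gu2015}, so the plan is to replay that proof over $\Cn$, replacing every transpose by a conjugate transpose (in particular reading $\hat\fOmega = \rH\fV\fOmega$). The reason this adaptation is purely cosmetic is that the argument uses only the complex SVD $\fA = \fU\fSigma\rH\fV$ with unitary $\fU,\fV$ and real nonnegative diagonal $\fSigma$, together with field-agnostic facts: orthogonal projectors are non-expansive in the Frobenius norm, a Pythagorean splitting across a projector and its complement, $\fOmega_1\fOmega_1^\dagger = \I{}$ whenever $\fOmega_1$ has full row rank, and submultiplicativity $\vert\vert\fX\fZ\vert\vert_F \le \vert\vert\fX\vert\vert_F\vert\vert\fZ\vert\vert_2$, none of which is special to $\R$. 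I would begin by diagonalizing the sketch: since $(\rH\fA\fA)^{\qpow} = \fV(\rH\fSigma\fSigma)^{\qpow}\rH\fV$, one gets $\fY = \fU\tilde\fSigma\hat\fOmega$ with $\tilde\fSigma := \fSigma(\rH\fSigma\fSigma)^{\qpow}$ diagonal carrying the amplified entries $\sigma_j^{2\qpow+1}$ and $\hat\fOmega = \rH\fV\fOmega$; then partition at index $l-s$ into blocks $\tilde\fSigma = \blkdiag(\tilde\fSigma_1,\tilde\fSigma_2)$, $\hat\fOmega = [\fOmega_1;\fOmega_2]$, $\fU = [\fU_1,\fU_2]$.

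The first inequality is the best-approximation property of the orthogonal projector: $\fQ\rH\fQ\fA$ is the closest matrix to $\fA$ in Frobenius norm whose column space lies in $\mathrm{range}(\fQ)$, and $\fQ\fB_{(k)}$ is a competitor with that property, so $\vert\vert(\I{m}-\fQ\rH\fQ)\fA\vert\vert_F = \vert\vert\fA - \fQ\rH\fQ\fA\vert\vert_F \le \vert\vert\fA - \fQ\fB_{(k)}\vert\vert_F$. For the second (main) inequality I would reduce to a ``flat'' subspace contained in $\mathrm{range}(\fY) = \mathrm{range}(\fQ)$. Because $\fOmega_1$ has full row rank and $\fA$ has full rank with $l-s \le \mathrm{rank}(\fA)$ (so $\tilde\fSigma_1$ is invertible), the matrix $\fZ := \fY\fOmega_1^\dagger\tilde\fSigma_1^{-1} = \fU\left[\begin{smallmatrix}\I{}\\ \tilde\fSigma_2\fOmega_2\fOmega_1^\dagger\tilde\fSigma_1^{-1}\end{smallmatrix}\right]$ has columns in $\mathrm{range}(\fY)$; hence $\fP_\fZ \preceq \fQ\rH\fQ$ and $\vert\vert(\I{m}-\fQ\rH\fQ)\fA\vert\vert_F \le \vert\vert(\I{m}-\fP_\fZ)\fA\vert\vert_F$.

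It then remains to estimate the projection error of the explicit flat matrix $\fZ = \fU[\I{};\fW]$ with $\fW := \tilde\fSigma_2\fOmega_2\fOmega_1^\dagger\tilde\fSigma_1^{-1}$. Writing the projector as $\fP_\fZ = \fZ(\I{}+\rH\fW\fW)^{-1}\rH\fZ$ and expanding $(\I{m}-\fP_\fZ)\fA$ in $\fU$-coordinates, a direct computation (as in Gu's proof, which refines \Cref{Deterr}) yields $\vert\vert(\I{m}-\fP_\fZ)\fA\vert\vert_F^2 \le \sum_{j\ge k+1}\sigma_j^2 + \vert\vert(\I{}+\rH\fW\fW)^{-1/2}\fW\,(\text{top block})\vert\vert_F^2$, and the factor $(\I{}+\rH\fW\fW)^{-1/2}$ is precisely what generates the denominator $1+\gamma^2\vert\vert\fOmega_2\vert\vert_2^2\vert\vert\fOmega_1^\dagger\vert\vert_2^2$. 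The constants $\alpha,\gamma$ come from bounding $\fW$: since $\tilde\fSigma$ carries the entries $\sigma_j^{2\qpow+1}$, a submultiplicative estimate relates $\vert\vert\fW\vert\vert_2$ to $\vert\vert\fOmega_2\vert\vert_2\vert\vert\fOmega_1^\dagger\vert\vert_2$ times a ratio of amplified singular values, which — after tracking that the rank-$k$ truncation only charges the tail $\sum_{j\ge k+1}\sigma_j^2$ while the split sits at $l-s$ — produces the ratio $(\sigma_{l-s+1}/\sigma_k)^{2\qpow}$, the prefactor $\sqrt{k}\,\sigma_{l-s+1}$ in $\alpha$, and the $\sigma_1$-normalization in $\gamma$. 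The third inequality is immediate: dropping the nonnegative term $\gamma^2\vert\vert\fOmega_2\vert\vert_2^2\vert\vert\fOmega_1^\dagger\vert\vert_2^2$ in the denominator only enlarges the fraction.

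The main obstacle will be the flat-matrix computation in the preceding paragraph: obtaining the sharp form with the $(\I{}+\rH\fW\fW)^{-1}$ denominator (rather than the cruder \Cref{Deterr}-type estimate) while simultaneously keeping the two bookkeeping indices separate — the truncation level $k$ that produces the optimal tail $\sum_{j\ge k+1}\sigma_j^2$, and the splitting level $l-s$ that governs $\fOmega_1,\fOmega_2$. In particular, a naive norm bound on $\tilde\fSigma_1^{-1}$ would yield the weaker ratio $\sigma_{l-s+1}/\sigma_{l-s}$, and it is exactly Gu's more careful alignment with the top-$k$ directions that sharpens this to $\sigma_{l-s+1}/\sigma_k$. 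I would handle this by following Gu's indexing step by step and verifying at each matrix identity that Hermitian transposes may replace transposes, which is the only place the complex setting enters.
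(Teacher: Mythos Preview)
Your plan follows Gu's strategy and the complex adaptation is indeed cosmetic, but the chain you actually wrote down establishes only the outer inequality $\vert\vert(\I{m}-\fQ\rH{\fQ})\fA\vert\vert_F \leq \text{RHS}$, not the stated middle one $\vert\vert\fA-\fQ\fB_{(k)}\vert\vert_F \leq \text{RHS}$. Your flat-subspace step gives $\vert\vert(\I{m}-\fQ\rH{\fQ})\fA\vert\vert_F \leq \vert\vert(\I{m}-\fP_\fZ)\fA\vert\vert_F$; however sharp the estimate on the latter, it cannot upper-bound $\vert\vert\fA-\fQ\fB_{(k)}\vert\vert_F$, which sits \emph{above} the projection error in the chain.

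The paper (following Gu) closes this by an additional reduction \emph{before} the flat-subspace argument:
\[
\vert\vert\fA-\fQ\fB_{(k)}\vert\vert_F \;\leq\; \vert\vert\fA-\fQ\rH{\fQ}\fA_{(k)}\vert\vert_F \;=\; \sqrt{\vert\vert(\I{m}-\fQ\rH{\fQ})\fA_{(k)}\vert\vert_F^2 + \sum_{j\geq k+1}\sigma_j^2},
\]
the first step because $\fQ\rH{\fQ}\fA_{(k)}$ has rank at most $k$, the second via a Pythagorean split using $\fA_{(k)}\rH{(\fA-\fA_{(k)})}=\fzero$. The flat-subspace/column-transformation device is then applied to $\vert\vert(\I{m}-\fQ\rH{\fQ})\fA_{(k)}\vert\vert_F$, i.e., to the projection error on $\fA_{(k)}$ rather than on $\fA$. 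This is exactly what makes the optimal tail $\sum_{j\geq k+1}\sigma_j^2$ appear while the split for $\fOmega_1,\fOmega_2$ sits at $l-s$: since $\fA_{(k)}$ only carries the top-$k$ directions, the paper's analogue of your $\fW$ involves $\fSigma_1^{-(2\qpow+1)}$ with $\fSigma_1\in\R^{k\times k}$, which is precisely how $\sigma_k$ (not $\sigma_{l-s}$) enters $\alpha,\gamma$. Concretely, the paper right-multiplies $\fY$ by a nonsingular $\fX$ (invariance of $\fQ\rH\fQ$), takes the QR of $\fY\fX$, and further restricts to its first $k$ columns $\hat\fQ_1$ before comparing with $\fA_{(k)}$. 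Your $\fZ$ has $l-s$ columns and is compared against $\fA$; both discrepancies need to be repaired. You correctly flag this bookkeeping as the main obstacle, but the route you sketched through $\vert\vert(\I{m}-\fP_\fZ)\fA\vert\vert_F$ is not how it is resolved.
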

In \cite{Gu2015} only real matrices with $m \geq n$ have been assumed. However, the proof works in a similar way also in a more general setting for complex matrices of arbitrary size as we will explain in the following. The additional assumption $\mathrm{rank}(\fY$) = $l$ is very likely to be fulfilled in practice because the orthogonal complement $(\rH{\fA})^\perp$ of $\rH{\fA}$ is a null set in $\R^n$ and it is very unlikely (zero probability in exact arithmetics) that for a random vector $\fomega \in \R^{n}$ it holds that $\fomega \in (\rH{\fA})^\perp$. Furthermore, a family of random vectors $\fomega_i \in \R^{n}, i = 1,..,l\leq n$ is linearly dependent with zero probability in exact arithmetics. \\ 
In order to prove \Cref{propadvdetbound}, the following inequality is proven first:
\begin{lemma}\label{lemma3}
Consider $\fA\in \Cn^{m\times n}, \fQ\in \Cn^{m\times k}$ with orthonormal columns.  Then, 
\begin{align*}
\vert\vert(\I{m} - \fQ\rH{\fQ})\fA\vert\vert_F  &\overset{(\mathrm{a})}{\leq} \vert\vert\fA-\fQ\fB_{(k)}\vert\vert_F\overset{(\mathrm{b})}{\leq}\vert\vert\fA-\fQ\rH{\fQ}\fA_{(k)}\vert\vert_F  \\
&\overset{(\mathrm{c})}{\leq} \sqrt{\vert\vert(\I{m}-\fQ\rH{\fQ}) \fA_{(k)} \vert\vert_F^2 + \sum\limits_{j \geq k+1} \sigma_j^2}.
\end{align*}
with $\fA_{(k)}$ the rank-$k$ truncated SVD of $\fA$.
\end{lemma}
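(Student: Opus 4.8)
The plan is to reduce all three inequalities to orthogonal (Pythagorean) splittings of the residual, combined with the best-rank-$k$ approximation property of the truncated SVD. Write $\fP := \fQ\rH{\fQ}$ for the orthogonal projector onto $\mathrm{range}(\fQ)$ and recall that $\fB = \rH{\fQ}\fA$, so that $\fQ\fB = \fP\fA$. The key identity I would establish first is that for \emph{any} matrix $\fM \in \Cn^{k\times n}$,
\[
\vert\vert\fA - \fQ\fM\vert\vert_F^2 = \vert\vert(\I{m} - \fP)\fA\vert\vert_F^2 + \vert\vert\fB - \fM\vert\vert_F^2 .
\]
This follows by writing $\fA - \fQ\fM = (\I{m}-\fP)\fA + \fQ(\fB - \fM)$: the first summand lies in $\mathrm{range}(\fQ)^\perp$ and the second in $\mathrm{range}(\fQ)$, so the two are Frobenius-orthogonal, and $\vert\vert\fQ(\fB-\fM)\vert\vert_F = \vert\vert\fB-\fM\vert\vert_F$ because $\fQ$ has orthonormal columns.

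Given this identity, (a) and (b) are immediate. For (a) I would take $\fM = \fB_{(k)}$ and simply drop the nonnegative term $\vert\vert\fB-\fB_{(k)}\vert\vert_F^2$, leaving $\vert\vert(\I{m}-\fP)\fA\vert\vert_F^2 = \vert\vert(\I{m}-\fQ\rH{\fQ})\fA\vert\vert_F^2$. For (b) I would apply the identity twice: once with $\fM = \fB_{(k)}$ and once with $\fM = \rH{\fQ}\fA_{(k)}$, noting $\fP\fA_{(k)} = \fQ\,\rH{\fQ}\fA_{(k)}$. The two right-hand sides share the term $\vert\vert(\I{m}-\fP)\fA\vert\vert_F^2$, so it suffices to compare $\vert\vert\fB - \fB_{(k)}\vert\vert_F$ with $\vert\vert\fB - \rH{\fQ}\fA_{(k)}\vert\vert_F$. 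Since $\rH{\fQ}\fA_{(k)}$ has rank at most $k$ while $\fB_{(k)}$ is the best rank-$k$ Frobenius approximation of $\fB$, the former is no larger, which yields (b).

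For (c) I would use a second Pythagorean split, this time of $\fA - \fP\fA_{(k)} = (\fA - \fA_{(k)}) + (\I{m}-\fP)\fA_{(k)}$. The crucial observation is that the cross term vanishes: from the truncated SVD one has $\fA_{(k)}\rH{(\fA-\fA_{(k)})} = \fzero$ (the leading and trailing left singular subspaces are mutually orthogonal), hence $\langle \fA-\fA_{(k)},\, (\I{m}-\fP)\fA_{(k)}\rangle_F = \Re\,\tr\big((\I{m}-\fP)\fA_{(k)}\rH{(\fA-\fA_{(k)})}\big) = 0$ by cyclicity of the trace. This in fact gives the equality $\vert\vert\fA - \fP\fA_{(k)}\vert\vert_F^2 = \vert\vert(\I{m}-\fP)\fA_{(k)}\vert\vert_F^2 + \vert\vert\fA-\fA_{(k)}\vert\vert_F^2$, and since $\vert\vert\fA-\fA_{(k)}\vert\vert_F^2 = \sum_{j\geq k+1}\sigma_j^2$ the bound (c) follows (with equality, so the stated $\le$ is on the safe side).

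The routine parts are the orthogonality bookkeeping; the only steps requiring genuine care are the rank argument invoking best-rank-$k$ optimality in (b) and the vanishing of the cross term in (c), where one must correctly exploit that $\fA_{(k)}$ and $\fA-\fA_{(k)}$ are built from complementary, mutually orthogonal singular subspaces. I expect the cross-term cancellation in (c) to be the main thing to get right, since it is precisely what upgrades a naive triangle-inequality estimate (which would only deliver the sum of the two norms) to the sharp square-root-of-sum-of-squares form.
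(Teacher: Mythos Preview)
Your proposal is correct and follows essentially the same route as the paper. The paper derives precisely your key identity $\vert\vert\fA-\fQ\fM\vert\vert_F^2 = \vert\vert(\I{m}-\fP)\fA\vert\vert_F^2 + \vert\vert\fB-\fM\vert\vert_F^2$ (first for $\fM=\fB_{(k)}$, then for general $\fM$), uses it together with the best rank-$k$ property applied to $\rH{\fQ}\fA_{(k)}$ for (b), and for (c) eliminates the cross term via trace cyclicity and $\fA_{(k)}\rH{(\fA-\fA_{(k)})}=\fzero$, obtaining (c) with equality just as you observe.
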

\begin{proof}
We start with the inequality (a): 
\begin{align}
&\vert\vert\fA- \fQ\fB_{(k)}\vert\vert_F^2 = \vert\vert\fA-\fQ\rH \fQ\fA + \fQ\rH \fQ \fA -  \fQ\fB_{(k)}\vert\vert_F^2 \notag \\
&= \vert\vert(\I{m} -\fQ\rH{\fQ}) \fA\vert\vert_F^2 +2\Re(\tr(\rH{\fA}\underbrace{(\I{m} - \fQ\rH{\fQ})\fQ}_{=0}( \rH{\fQ} \fA - \fB_{(k)}))) \\ &+\vert\vert \fQ(\rH{\fQ} \fA - \fB_{(k)})\vert\vert_F^2 \notag\\
&= \vert\vert(\I{m} -\fQ\rH{\fQ}) \fA\vert\vert_F^2 +\vert\vert \rH{\fQ} \fA - \fB_{(k)}\vert\vert_F^2 \label{eqnAQB}
\end{align}
which implies
\begin{align*}
\vert\vert(\I{m} -\fQ\rH{\fQ}) \fA\vert\vert_F^2 = \vert\vert\fA- \fQ\fB_{(k)}\vert\vert_F^2 - \vert\vert\rH{\fQ} \fA - \fB_{(k)}\vert\vert_F^2 \leq \vert\vert\fA- \fQ\fB_{(k)}\vert\vert_F^2.
\end{align*}
Next, inequality (b) is shown: 
By the identical argument as in \Cref{eqnAQB} for every $\fB\in \Cn^{k\times n}$ it holds that $$\vert\vert\fA-\fQ\fB\vert\vert_F^2 = \vert\vert(\I{m} -\fQ\rH{\fQ}) \fA\vert\vert_F^2 +\vert\vert \rH{\fQ} \fA - \fB\vert\vert_F^2.$$
A rank-$k$-minimizer of $\vert\vert\rH{\fQ} \fA -\fB\vert\vert_F$ is known to be the rank-$k$ truncated SVD of $\rH\fQ\fA\approx \fB_{(k)}.$
 Since $\vert\vert(\I{m} -\fQ\rH{\fQ}) \fA\vert\vert_F^2$ does not depend on $\fB$, a rank-$k$ minimizer $\fB$ of $\vert\vert\fA-\fQ\fB\vert\vert_F$ is a rank-$k$-minimizer of $\vert\vert\rH{\fQ} \fA -\fB\vert\vert_F$.
Since $\fQ\rH{\fQ}\fA_{(k)} $ is at most of rank $k$, we obtain (b) 
$$\vert\vert\fA-\fQ\fB_{(k)}\vert\vert_F\leq\vert\vert\fA-\fQ\rH{\fQ}\fA_{(k)}\vert\vert_F.$$
Lastly, inequality (c) is shown: 
\begin{align*}
&\vert\vert\fA-\fQ\rH{\fQ}\fA_{(k)}\vert\vert_F^2 = \vert\vert\fA-\fA_{(k)} + \fA_{(k)} - \fQ\rH{\fQ}\fA_{(k)}\vert\vert_F^2 \\
&= \vert\vert\fA-\fA_{(k)}\vert\vert_F^2 + 2\Re(\tr(\rH{(\fA- \fA_{(k)})}(\fA_{(k)} - \fQ\rH{\fQ}\fA_{(k)}))) \\ 
& +\vert\vert\fA_{(k)} - \fQ\rH{\fQ}\fA_{(k)}\vert\vert_F^2\\
&= \vert\vert\fA-\fA_{(k)}\vert\vert_F^2 + \vert\vert\fA_{(k)} - \fQ\rH{\fQ}\fA_{(k)}\vert\vert_F^2\\
&= \sum\limits_{j \geq k+1} \sigma_j^2 + \vert\vert\fA_{(k)} - \fQ\rH{\fQ}\fA_{(k)}\vert\vert_F^2.
\end{align*}
where the middle term vanishes as
\begin{align*}
2\Re(&\tr(\rH{(\fA- \fA_{(k)})}(\fA_{(k)} - \fQ\rH{\fQ}\fA_{(k)}))) \\ 
&= 2\Re(\tr(\rH{(\fA- \fA_{(k)}}(\I{m} - \fQ\rH{\fQ})\fA_{(k)})))\\
&=  2\Re(\tr((\I{m} - \fQ\rH{\fQ})\underbrace{\fA_{(k)}\rH{(\fA- \fA_{(k)})}}_{=0})) = 0.
\end{align*}
 
\end{proof}

Next, another auxiliary statement is proven:
\begin{lemma}\label{lemma4}
Consider the matrix $\fA\in \Cn^{m\times n}$, with $\mathrm{rank}(\fA)=\min(n,m)$, $\fX \in \Cn^{l\times l}, l < \min(n,m)$ to be non-singular, $\fOmega\in \Cn^{n\times l}$ such that $\mathrm{rank}(\fY) = l$ with $\fY = (\fA\rH \fA)^{\qpow}\fA\fOmega$. Let $\fQ\fR = \fY$ be the QR-factorization of $\fY$ with $\fQ \in \Cn^{m\times l}, \fR \in \Cn^{l\times l}$, with $\hat \fQ\hat \fR = \fY_X:= \fY\fX$ be the QR-factorization of $\fY_X$ with $\hat\fQ \in \Cn^{m\times l}, \hat\fR \in \Cn^{l\times l}$. 
Then, 
\begin{equation}
\label{eqnQQQQ}
\fQ\rH \fQ = \hat \fQ \rH{\hat \fQ}.
\end{equation}
\end{lemma}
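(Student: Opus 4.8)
The plan is to recognize that the claimed identity $\fQ\rH{\fQ} = \hat{\fQ}\rH{\hat{\fQ}}$ asserts nothing more than the equality of the two orthogonal projectors onto $\mathrm{range}(\fY)$ and $\mathrm{range}(\fY_X)$, and then to argue that these two column spaces coincide because $\fX$ is non-singular. Since the orthogonal projector onto a fixed subspace is unique, the identity will follow immediately. Note that the specific structure $\fY = (\fA\rH{\fA})^{\qpow}\fA\fOmega$ plays no role whatsoever; the only property I will use is the full-column-rank hypothesis $\mathrm{rank}(\fY) = l$.

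First I would establish that $\fQ\rH{\fQ}$ is the orthogonal projector onto $\mathrm{range}(\fY)$. Because $\mathrm{rank}(\fY) = l$, the matrix $\fY \in \Cn^{m \times l}$ has full column rank, so in its QR-factorization $\fY = \fQ\fR$ the triangular factor $\fR \in \Cn^{l \times l}$ is invertible. Hence $\mathrm{range}(\fQ) = \mathrm{range}(\fY)$, and since $\fQ$ has orthonormal columns, i.e. $\rH{\fQ}\fQ = \I{l}$, the standard formula $\fQ(\rH{\fQ}\fQ)^{-1}\rH{\fQ} = \fQ\rH{\fQ}$ shows that $\fQ\rH{\fQ}$ is the (unique) orthogonal projector onto this subspace.

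Next I would argue the analogous statement for $\fY_X = \fY\fX$. Since $\fX$ is non-singular and $\fY$ has full column rank $l$, the product $\fY_X$ again has rank $l$, so by the same reasoning the factor $\hat{\fR}$ is invertible and $\hat{\fQ}\rH{\hat{\fQ}}$ is the orthogonal projector onto $\mathrm{range}(\fY_X)$. The key step is then to observe $\mathrm{range}(\fY_X) = \mathrm{range}(\fY)$: every column of $\fY_X = \fY\fX$ is a linear combination of the columns of $\fY$, which gives $\mathrm{range}(\fY_X) \subseteq \mathrm{range}(\fY)$, while the identity $\fY = \fY_X \inv{\fX}$ yields the reverse inclusion. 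Finally, both $\fQ\rH{\fQ}$ and $\hat{\fQ}\rH{\hat{\fQ}}$ are orthogonal projectors onto the one and the same subspace $\mathrm{range}(\fY) = \mathrm{range}(\fY_X)$, and by uniqueness of the orthogonal projector onto a given subspace they must coincide.

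I do not expect a genuine obstacle here: the entire content is the invariance of the column space, and hence of the associated orthogonal projector, under right-multiplication by an invertible matrix. The only points requiring a line of care are verifying that the full-rank hypothesis $\mathrm{rank}(\fY) = l$ transfers to $\fY_X$ and therefore guarantees that the QR factors $\fR$ and $\hat{\fR}$ are invertible, so that $\mathrm{range}(\fQ) = \mathrm{range}(\fY)$ and $\mathrm{range}(\hat{\fQ}) = \mathrm{range}(\fY_X)$ rather than merely inclusions.
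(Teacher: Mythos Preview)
Your argument is correct. The paper proves the same identity by a direct matrix computation: it rewrites $\fQ\rH{\fQ}$ as $\fY(\rH{\fY}\fY)^{-1}\rH{\fY}$ using the invertibility of $\fR$, then inserts $\fX\inv{\fX}$ and $\rH{\fX}\rmH{\fX}$ to rewrite this as $\fY_X(\rH{\fY_X}\fY_X)^{-1}\rH{\fY_X}$, and finally reverses the first manipulation to arrive at $\hat{\fQ}\rH{\hat{\fQ}}$. Your route is the geometric counterpart of this calculation: instead of manipulating the explicit projector formula, you identify $\fQ\rH{\fQ}$ and $\hat{\fQ}\rH{\hat{\fQ}}$ as orthogonal projectors onto $\mathrm{range}(\fY)$ and $\mathrm{range}(\fY_X)$, observe that these ranges coincide because $\fX$ is invertible, and invoke uniqueness of the orthogonal projector. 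Both arguments hinge on the same two facts (invertibility of $\fR,\hat{\fR}$ from the full-rank hypothesis, and invertibility of $\fX$); yours makes the underlying subspace invariance transparent, while the paper's version is a self-contained algebraic chain that avoids appealing to uniqueness of projectors.
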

\begin{proof}
The proof follows from elementary calculations:
\begin{align*}
\fQ\rH \fQ &= \fQ\fR\fR^{-1}(\rH{\fR})^{-1}\rH \fR \rH \fQ = \fQ\fR(\rH \fR \fR)^{-1}\rH \fR \rH \fQ \\
&=  \fQ\fR(\rH \fR\rH \fQ \fQ\fR)^{-1}\rH \fR \rH \fQ = \fY (\rH \fY \fY)^{-1}\rH \fY\\
&= \fY \fX\fX^{-1}(\rH \fY \fY)^{-1} (\rH \fX)^{-1}\rH \fX\rH \fY \\&= \fY \fX(\rH \fX\rH \fY \fY\fX)^{-1} \rH \fX\rH \fY = \fY_X (\rH {\fY_X} \fY_X)^{-1}\rH{ \fY_X} \\
&= \hat \fQ \rH{\hat \fQ}
\end{align*}
where the last equality follows from the equivalent reverse arguments of the first two lines.
\end{proof}
The next step is proving a further upper bound:  
\begin{lemma}\label{lemma5}
Consider $\fA\in \Cn^{m\times n}$, $\mathrm{rank}(\fA)=\min(n,m)$ with $\fA_{(k)}$ the rank-$k$-best approximation of $\fA$. Let $\fQ\fR = \fY= (\fA\rH \fA)^{\qpow}\fA\fOmega$ be the QR-factorization of $\fY$ and $\mathrm{rank}(\fY) = l$. Then, 
\begin{align}\label{lemmabound}
\vert\vert(\I{m}-\fQ\rH{\fQ}) \fA_{(k)} \vert\vert_F^2 \leq \frac{\alpha^2\vert\vert\fOmega_2\vert\vert_2^2\vert\vert\fOmega_1^\dagger\vert\vert_2^2}{1+\gamma^2\vert\vert\fOmega_2\vert\vert_2^2\vert\vert\fOmega_1^\dagger\vert\vert_2^2}, 
\end{align}
with $\alpha, \gamma,\fOmega_1, \fOmega_2$ defined as in \Cref{propadvdetbound}.
\end{lemma}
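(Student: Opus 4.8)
The plan is to reduce the statement to a computation in the singular value coordinates of $\fA$, replace the randomized range by a conveniently structured subspace contained in it, and then track spectral-norm estimates carefully enough to recover the exact denominator $1+\gamma^2\vert\vert\fOmega_2\vert\vert_2^2\vert\vert\fOmega_1^\dagger\vert\vert_2^2$ rather than the weaker bound $\alpha^2\vert\vert\fOmega_2\vert\vert_2^2\vert\vert\fOmega_1^\dagger\vert\vert_2^2$.

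First I would pass to the SVD $\fA=\fU\fSigma\rH{\fV}$, so that $\fY=(\fA\rH{\fA})^{\qpow}\fA\fOmega=\fU\fSigma^{2\qpow+1}\hat{\fOmega}$ with $\hat{\fOmega}=\rH{\fV}\fOmega$. The partition of \Cref{propadvdetbound} then reads $\fY=\fU_1\fSigma_1^{2\qpow+1}\fOmega_1+\fU_2\fSigma_2^{2\qpow+1}\fOmega_2$, where $\fU_1,\fSigma_1$ collect the top $l-s$ singular triplets. Since $\fOmega_1$ has full row rank, $\fOmega_1\fOmega_1^\dagger=\I{l-s}$, and right-multiplying $\fY$ by $\fOmega_1^\dagger\fSigma_1^{-(2\qpow+1)}$ produces
\[
\fW:=\fU_1+\fU_2\fG,\qquad \fG:=\fSigma_2^{2\qpow+1}\fOmega_2\fOmega_1^\dagger\fSigma_1^{-(2\qpow+1)}.
\]
Its columns lie in $\mathrm{range}(\fY)$ (this is where I would invoke \Cref{lemma4}, extending $\fOmega_1^\dagger\fSigma_1^{-(2\qpow+1)}$ to a square invertible $\fX$ so that the projector $\fQ\rH{\fQ}$ is unchanged). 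Because $\mathrm{range}(\fW)\subseteq\mathrm{range}(\fY)$, the orthogonal projector $\fP_{\fW}$ onto $\mathrm{range}(\fW)$ satisfies $\fP_{\fW}\preceq\fQ\rH{\fQ}$, hence $\vert\vert(\I{m}-\fQ\rH{\fQ})\fA_{(k)}\vert\vert_F\leq\vert\vert(\I{m}-\fP_{\fW})\fA_{(k)}\vert\vert_F$, and it suffices to bound the right-hand side.

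Next I would exploit the block structure: $\fU_1$ and $\fU_2$ have orthonormal columns with $\rH{\fU_1}\fU_2=\fzero$, and $\rH{\fW}\fW=\I{l-s}+\rH{\fG}\fG$. Since $k\leq l-s$, the top $k$ left singular vectors $\fu_1,\dots,\fu_k$ of $\fA$ are exactly the first $k$ columns of $\fU_1$; writing $\fA_{(k)}=\sum_{i\leq k}\sigma_i\fu_i\rH{\fv_i}$ and using orthonormality of the $\fv_i$ gives $\vert\vert(\I{m}-\fP_{\fW})\fA_{(k)}\vert\vert_F^2=\sum_{i=1}^{k}\sigma_i^2\vert\vert(\I{m}-\fP_{\fW})\fu_i\vert\vert_2^2$. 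To bound one term I would note that the squared distance of $\fu_i=\fU_1\fe_i$ to $\mathrm{range}(\fW)$ equals $\min_{\fa}\big(\vert\vert\fe_i-\fa\vert\vert_2^2+\vert\vert\fG\fa\vert\vert_2^2\big)$; testing only the admissible vectors $\fa=c\fe_i$ and optimizing the scalar $c$ yields the clean bound $\vert\vert(\I{m}-\fP_{\fW})\fu_i\vert\vert_2^2\leq g_i^2/(1+g_i^2)$ with $g_i:=\vert\vert\fG\fe_i\vert\vert_2$. Thus $\vert\vert(\I{m}-\fP_{\fW})\fA_{(k)}\vert\vert_F^2\leq\sum_{i=1}^{k}\sigma_i^2 g_i^2/(1+g_i^2)$.

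Finally I would insert the spectral-norm estimates. Writing $\fG\fe_i=\sigma_i^{-(2\qpow+1)}\fL\fe_i$ with $\fL:=\fSigma_2^{2\qpow+1}\fOmega_2\fOmega_1^\dagger$ gives $g_i^2=\sigma_i^{-(4\qpow+2)}\ell_i^2$ with the uniform column bound $\ell_i^2=\vert\vert\fL\fe_i\vert\vert_2^2\leq\sigma_{l-s+1}^{4\qpow+2}\vert\vert\fOmega_2\vert\vert_2^2\vert\vert\fOmega_1^\dagger\vert\vert_2^2=:\rho$. Each summand then rewrites as $\sigma_i^2 g_i^2/(1+g_i^2)=\sigma_i^2\ell_i^2/(\sigma_i^{4\qpow+2}+\ell_i^2)$, which is increasing in $\ell_i^2$ and hence bounded by $\sigma_i^2\rho/(\sigma_i^{4\qpow+2}+\rho)$. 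The crux, and the step I expect to be the main obstacle, is the final comparison $\sigma_i^2\rho/(\sigma_i^{4\qpow+2}+\rho)\leq\sigma_1^2\rho/(\sigma_1^2\sigma_k^{4\qpow}+\rho)$ for every $i\in\{1,\dots,k\}$: cross-multiplying reduces it to $\sigma_1^2\sigma_i^2(\sigma_k^{4\qpow}-\sigma_i^{4\qpow})\leq\rho(\sigma_1^2-\sigma_i^2)$, whose left side is $\leq0$ and right side $\geq0$ precisely because $\sigma_k\leq\sigma_i\leq\sigma_1$. Summing the $k$ identical upper bounds and substituting $\rho=\sigma_{l-s+1}^{4\qpow+2}\vert\vert\fOmega_2\vert\vert_2^2\vert\vert\fOmega_1^\dagger\vert\vert_2^2$ (using $\alpha^2=k\sigma_1^2\gamma^2$ and $\gamma^2\sigma_1^2\sigma_k^{4\qpow}=\sigma_{l-s+1}^{4\qpow+2}$) collapses the right-hand side exactly to $\alpha^2\vert\vert\fOmega_2\vert\vert_2^2\vert\vert\fOmega_1^\dagger\vert\vert_2^2/(1+\gamma^2\vert\vert\fOmega_2\vert\vert_2^2\vert\vert\fOmega_1^\dagger\vert\vert_2^2)$, as claimed. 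I would emphasize that merely dropping the denominator $1+g_i^2$ one step earlier yields only the weaker bound $\alpha^2\vert\vert\fOmega_2\vert\vert_2^2\vert\vert\fOmega_1^\dagger\vert\vert_2^2$; retaining the tight constant genuinely requires the uniform column bound, the monotonicity in $\ell_i^2$, and the sign argument driven by the ordering $\sigma_k\leq\sigma_i\leq\sigma_1$.
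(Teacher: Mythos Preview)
Your argument is correct and arrives at the stated bound, but it differs from the paper's proof in its technical execution after the common first move. Both proofs pass to SVD coordinates and right-multiply $\fY$ by $\fOmega_1^\dagger$ times an inverse power of the top singular values to produce a range-contained subspace of the form ``identity block plus perturbation''; this is precisely the paper's construction of $\fY\fX$ and its block $\fH_1$, which coincides (up to the three-block versus two-block splitting of $\fSigma$) with the first $k$ columns of your $\fG$. From there the paper proceeds by writing out $(\I{m}-\hat{\fQ}_1\rH{\hat{\fQ}_1})\fA_{(k)}$ explicitly via $(\I{k}+\rH{\fH_1}\fH_1)^{-1}$, reducing to a trace, and then invoking Courant--Fischer to compare eigenvalues of $(\fSigma_1\rH{\fH_1}\fH_1\fSigma_1)^{-1}+\fSigma_1^{-2}$ with those of a scalar shift, which ultimately yields the same fraction $k\|\fH_1\fSigma_1\|_2^2/(1+\|\fH_1\fSigma_1\|_2^2\sigma_1^{-2})$. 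You instead bound each column contribution $\sigma_i^2\|(\I{m}-\fP_\fW)\fu_i\|_2^2$ separately by the one-dimensional variational test $\fa=c\fe_i$, then push the uniform column bound $\ell_i^2\leq\rho$ through the monotone map $x\mapsto\sigma_i^2x/(\sigma_i^{4\qpow+2}+x)$ and finish with the sign inequality $\sigma_1^2\sigma_i^2(\sigma_k^{4\qpow}-\sigma_i^{4\qpow})\leq\rho(\sigma_1^2-\sigma_i^2)$. This is more elementary---no Courant--Fischer, no matrix-inverse gymnastics---at the cost of the variational step being an upper bound rather than an identity; the paper's trace computation is exact until the eigenvalue comparison. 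Both routes recover the sharp denominator $1+\gamma^2\|\fOmega_2\|_2^2\|\fOmega_1^\dagger\|_2^2$. One small point: your shorthand $\fSigma_2^{2\qpow+1}$ tacitly assumes the tail block is square, which fails when $m\neq n$; the paper writes $\fSigma_3(\rT{\fSigma_3}\fSigma_3)^{\qpow}$ for exactly this reason, and you should do the same.
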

\begin{proof}
Let $\fA = \fU \fSigma \rH  \fV, \fU\in \R^{m \times \min(m,n)}, \fSigma\in \R^{\min(m,n) \times n}, \fV\in \R^{n \times n}$ be the singular value decomposition of $\fA$. Define $\hat \fOmega = \rH \fV\fOmega$ and split $\hat \fOmega = \begin{pmatrix}
\fOmega_1\\
\fOmega_2
\end{pmatrix}$, with $\fOmega_1  \in \Cn^{(l-s)\times l}$, $\fOmega_2  \in \Cn^{(n-l+s)\times l}$. Split $\fSigma =\text{blkdiag}(\fSigma_1,\fSigma_2, \fSigma_3)$, with $\fSigma_1\in \R^{k\times k},\fSigma_2\in \R^{(l-s-k)\times (l-s-k)}, \fSigma_3\in \R^{(\min(m,n)-l+s)\times (n-l+s)}).$
Let $\fOmega_1^\dagger$ denote the pseudo-inverse of $\fOmega_1$. Since $\fOmega_1$ has full row rank it holds that $$\fOmega_1\fOmega_1^\dagger = \I{l-s}.$$
Choose $$\fX = \left[ \fOmega_1^\dagger\begin{pmatrix}
\fSigma_1 & \Z{k,l-s-k} \\
\Z{l-s-k,k} &\fSigma_2
\end{pmatrix}^{-(2\qpow+1)}, \hat \fX  \right],$$ where the matrix $\hat \fX \in \Cn^{l\times s}$ is chosen such that $\fX$ is non-singular and $\hat \fOmega_1\hat \fX = 0.$ Since the matrix $\fOmega_1^\dagger\begin{pmatrix}
\fSigma_1 & \Z{k,l-s-k} \\
\Z{l-s-k,k} &\fSigma_2
\end{pmatrix}^{-(2\qpow+1)}$ has full column rank (as $\fOmega_1$ has full row rank and $\sigma_i > 0, i = i...\min(m,n)$) and for $s\geq 1$ such a matrix $\fX$ must always exist because of the rank–nullity theorem as $\hat \fOmega_1\in \Cn^{l-s\times l}$ is assumed to have full row rank, i.e., rank($\hat \fOmega_1$) = $l-s.$ Therefore $$\mathrm{dim} \mathrm{(ker}(\hat \fOmega_1)) = l - (l-s) = s$$ 
must hold. Hence, $s$ linearly independent vectors from ker($\hat \fOmega_1)$ can be chosen and stacked to form $\hat \fX$. The column vectors of $\fOmega_1^\dagger\begin{pmatrix}
\fSigma_1 & \Z{k,l-s-k} \\
\Z{l-s-k,k} &\fSigma_2
\end{pmatrix}^{-(2\qpow+1)}$ do not lie in the null-space of $\hat \fOmega_1$ by construction and therefore must be linearly independent from $\hat \fX$. 
With the choice $$\fX = \left[ \fOmega_1^\dagger\begin{pmatrix}
\fSigma_1 & \Z{k,l-s-k} \\
\Z{l-s-k,k} &\fSigma_2
\end{pmatrix}^{-(2\qpow+1)}, \hat \fX  \right]$$ we have the following structure for $\fY\fX$: 
$$\fY\fX = \fU\begin{pmatrix}
\I{k} &\Z{k,l-s-k} &\Z{k,n-l+s} \\
\Z{l-s-k,k} &\I{l-s-k} & \Z{l-s-k,n-l+s} \\
\fH_1 &\fH_2 &\fH_3 
\end{pmatrix}$$
with 
$$\fH_1 = \fSigma_3(\rT{\fSigma_3}\fSigma_3)^{2\qpow}\fOmega_2\fOmega_1^\dagger\begin{pmatrix}
\fSigma_1^{-2\qpow+1} \\ \Z{l-s-k,k}
\end{pmatrix}\in \R^{(\min(m,n)-l+s)\times k}$$ 
$$\fH_2 = \fSigma_3(\rT{\fSigma_3}\fSigma_3)^{2\qpow}\fOmega_2\fOmega_1^\dagger\begin{pmatrix}
\Z{k,l-s-k} \\ \fSigma_2^{-2\qpow+1} 
\end{pmatrix}\in \R^{(\min(m,n)-l+s)\times (l-s-k)}$$
$$\fH_3 = \fSigma_3(\rT{\fSigma_3}\fSigma_3)^{2\qpow}\fOmega_2\hat \fX \in \R^{(\min(m,n)-l+s)\times (n-l+s)}.$$
Next, we similarly split the QR-factorization of $\fY\fX$ into blocks:
$$\fY\fX = \hat \fQ\hat \fR = ({\hat \fQ}_1 \ {\hat \fQ}_2 \ {\hat \fQ}_3)\begin{pmatrix}
\hat{\fR}_{11} &\hat{\fR}_{12} &\hat{\fR}_{13} \\ 
\Z{} &\hat{\fR}_{22} &\hat{\fR}_{23} \\
\Z{} & \Z{} &\hat{\fR}_{33}
\end{pmatrix}. 
$$
This also results in the QR factorization
\begin{equation}
\label{eqnsmallQR}
\fU\begin{pmatrix}
\I{k} \\ \Z{l-s-k,k} \\ \fH_1
\end{pmatrix}  = \hat{\fQ}_1 \hat{\fR}_{11}.
\end{equation}
With (\ref{eqnQQQQ}) and by restricting the number of columns of $\hat \fQ$ we have
\begin{align}\label{eqn:restrictcolumns}
\vert\vert(\I{m} - \fQ\rH \fQ) \fA_{(k)}\vert\vert = \vert\vert(\I{m} - \hat{\fQ}\rH{\hat{\fQ}}) \fA_{(k)}\vert\vert\leq \vert\vert(\I{m} - \hat{\fQ}_1\rH{\hat{\fQ}_1}) \fA_{(k)}\vert\vert
\end{align}
with $\fA_{(k)}  = \fU \blkdiag(\fSigma_1, \Z{l-s-k}, \Z{n-l+s}) \rH \fV.$
The last step of the proof is to derive a bound for $\vert\vert(\I{m} - \hat{\fQ}_1\rH{\hat{\fQ}_1}) \fA_{(k)}\vert\vert$. 
First, we use the QR factorization from (\ref{eqnsmallQR}) to reformulate $\vert\vert(\I{m} - \hat{\fQ}_1\rH{\hat{\fQ}_1}) \fA_{(k)}\vert\vert$ :
\begin{align}\notag
&\vert\vert(\I{m} - \hat{\fQ}_1\rH{\hat{\fQ}_1}) \fA_{(k)}\vert\vert_F = \\&\left|\left|\left(\I{\min(m,n)} - \begin{pmatrix}\I{k} \\ \Z{l-s-k,k} \\\fH_1 \end{pmatrix} \hat{\fR}_{11}^{-1}(\rH{\hat{\fR}_{11}})^{-1}\rH{\begin{pmatrix}\I{k} \\ \Z{l-s-k,k} \\\fH_1 \end{pmatrix}}\right) \begin{pmatrix} \fSigma_1 \\  \Z{l-s-l,k} \\ \Z{min(m,n)-l+s,k} \end{pmatrix}\right|\right|_F. \label{boundQ1Q1A}
\end{align}

Next, we reformulate $\hat{\fR}_{11}^{-1}(\rH{\hat{\fR}_{11}})^{-1}$: 
\begin{align} 
\hat{\fR}_{11}^{-1}(\rH{\hat{\fR}_{11}})^{-1} = (\rH{\hat \fR}_{11}\hat{\fR}_{11})^{-1} &= \left(\rH{\begin{pmatrix}\I{k} \\ \Z{l-s-k,k} \\\fH_1 \end{pmatrix}} \rH{\hat{\fQ}_1} \hat{\fQ}_1 \begin{pmatrix}\I{k} \\ \Z{l-s-k,k} \\ \fH_1 \end{pmatrix}
\right)^{-1} \\& = (\I{k}   + \rH{\fH_1} \fH_1)^{-1}. \label{eqn:refR}
\end{align}
Inserting that into \Cref{boundQ1Q1A} results in 
\begin{align}
&\vert\vert(\I{m} - \hat{\fQ}_1\rH{\hat{\fQ}_1}) \fA_{(k)}\vert\vert_F\\
&=\left|\left|\left(\I{\min(m,n)} - \begin{pmatrix}\I{k} \\ \Z{l-s-k,k} \\\fH_1 \end{pmatrix} \hat{\fR}_{11}^{-1}(\rH{\hat{\fR}_{11}})^{-1}\rH{\begin{pmatrix}\I{k} \\ \Z{l-s-k,k} \\\fH_1 \end{pmatrix}}\right)\begin{pmatrix}\fSigma_1 \\  \Z{l-s-k,k} \\ \Z{\min(m,n)-l+s,k} \end{pmatrix}\right|\right|_F \notag\\ 
&\overset{\ref{eqn:refR}}{=}  \left|\left|\begin{pmatrix}\fSigma_1 \\ \Z{l-s-k,k} \\ \Z{\min(m,n)-l+s,k} \end{pmatrix} - \begin{pmatrix}\I{k} \\ \Z{l-s-k,k} \\\fH_1 \end{pmatrix} (\I{k}   + \rH{\fH_1} \fH_1)^{-1}\fSigma_1 \right|\right|_F \notag\\ 
&= \left|\left|\begin{pmatrix}
\I{k} - (\I{k} + \rH{\fH_1}\fH_1)^{-1} \\ 
\fH_1(\I{k} + \rH{\fH_1}\fH_1)^{-1}
\end{pmatrix}\fSigma_1\right|\right|_F. 
\label{eqnH1Sigma1}
\end{align}

Next, we reformulate the second component $\fH_1(\I{k} + \rH{\fH_1}\fH_1)^{-1}$ of the right side of (\ref{eqnH1Sigma1}): 
\begin{align}
\fH_1(\I{k} + \rH{\fH_1}\fH_1)^{-1} &= (\fH_1^{-1})^{-1}(\I{k} + \rH{\fH_1}\fH_1)^{-1} = ((\I{k} + \rH{\fH_1}\fH_1)\fH_1^{-1})^{-1}\notag\\
&= (\fH_1^{-1} + \rH{\fH_1})^{-1} = (\fH_1^{-1}(\I{n-l+s} + \fH_1\rH{\fH_1}))^{-1} \notag\\
&=  (\I{\min(m,n)-l+s} + \fH_1\rH{\fH_1})^{-1} \fH_1
\label{eqn_2ndcomp}
\end{align}
and then the first component of the right hand side of (\ref{eqnH1Sigma1})
\begin{align*}
(\I{k}- (\I{k} + \rH {\fH_1}\fH_1)^{-1}) &= (\I{k} + \rH {\fH_1}\fH_1)(\I{k} + \rH {\fH_1}\fH_1)^{-1} - (\I{k} + \rH {\fH_1}\fH_1)^{-1} \\
&= (\rH{\fH_1}\fH_1)(\I{k} + \rH {\fH_1}\fH_1)^{-1}\\
& \overset{(\ref{eqn_2ndcomp})}{=} \rH{\fH_1}(\I{\min(m,n)-l+s} + \fH_1\rH {\fH_1})^{-1}\fH_1.
\end{align*}
Inserting this into (\ref{eqnH1Sigma1}) and factorizing yields
\begin{align*}
&\vert\vert(\I{m} - \hat{\fQ}_1\rH{\hat{\fQ}_1}) \fA_{(k)}\vert\vert_F \\ &= \left\vert\left\vert\begin{pmatrix}
\rH{\fH_1} \\ 
 \I{\min(m,n)-(l-s)}
\end{pmatrix}(\I{\min(m,n)-(l-s)} + \fH_1\rH{\fH_1})^{-1}\fH_1 \fSigma_1  \right\vert\right\vert_F  \\
&= \tr(\rH{(\fH_1 \fSigma_1)}(\I{\min(m,n)-(l-s)} + \fH_1\rH{\fH_1})^{-\textsf{H}} \fH_1 \fSigma_1)\\
&= \tr(\rH{(\fH_1 \fSigma_1)}(\I{\min(m,n)-(l-s)} + \fH_1\rH{\fH_1})^{-1} \fH_1 \fSigma_1) \\
&= \tr((\rmH{(\fH_1 \fSigma_1)})^{-1}(\I{\min(m,n)-(l-s)} + \fH_1\rH{\fH_1})^{-1} ((\fH_1 \fSigma_1)^{-1})^{-1}) \\
&= \tr(((\fH_1 \fSigma_1)^{-1}(\I{\min(m,n)-(l-s)} + \fH_1\rH{\fH_1})\rmH{(\fH_1 \fSigma_1)})^{-1})
\\
&= \tr((\fSigma_1^{-1}\fH_1^{-1} (\I{\min(m,n)-(l-s)} + \fH_1\rH{\fH_1})\rmH{\fH_1} \fSigma_1^{-1})^{-1})\\
&= \tr((\fSigma_1^{-1}\fH_1^{-1}\rmH{\fH_1} \fSigma_1^{-1} + \fSigma_1^{-1}\fSigma_1^{-1})^{-1})\\
&= \tr(((\fSigma_1\rH{\fH_1}\fH_1 \fSigma_1)^{-1} + \fSigma_1^{-2})^{-1}). 
\end{align*}
Now, we further analyze 
$\tr(((\fSigma_1\rH{\fH_1}\fH_1 \fSigma_1)^{-1} + \fSigma_1^{-2})^{-1})$.
First, we recall that for Hermitian matrices $\fE, \fF\in \Cn^{k \times k}$ it follows from Courant-Fischer that $$\lambda_j(\fE+\fF)\geq \lambda_{\text{min}}(\fE) +\lambda_j(\fF), \text{ with } 1\leq j\leq k.$$ \\
This implies, that
\begin{align*}
\lambda_j((\fSigma_1\rH{\fH_1}\fH_1 \fSigma_1)^{-1} + \fSigma_1^{-2})&\geq \lambda_{\text{min}}((\fSigma_1\rH{\fH_1}\fH_1 \fSigma_1)^{-1}) +\lambda_j(\fSigma_1^{-2}) \\
&= \vert\vert(\fSigma_1\rH{\fH_1}\fH_1 \fSigma_1\vert\vert_2^{-1} +\lambda_j(\fSigma_1^{-2}) \\
&= \vert\vert\fH_1 \fSigma_1\vert\vert_2^{-2} +\lambda_j(\fSigma_1^{-2}) \\
&= \lambda_j(\vert\vert\fH_1 \fSigma_1\vert\vert_2^{-2}\I{k} +\fSigma_1^{-2}) \\
\end{align*}
where we use at the first equality that 
$$ \lambda_\text{min}(\fM^{-1}) = \frac{1}{\lambda_\text{max}(\fM)} = \frac{1}{||\fM||_2} = ||\fM||_2^{-1}  
$$ for every invertible matrix $\fM \in \R^{n\times n}, n \in \N$ and at the last equality that the $j$-th eigenvalue of the diagonal matrix $\vert\vert\fH_1 \fSigma_1\vert\vert_2^{-2}\I{k} +\fSigma_1^{-2}$ is its $j$-th diagonal value which is  $\vert\vert\fH_1 \fSigma_1\vert\vert_2^{-2} + \lambda_j(\fSigma_1^{-2}).$
Therefore, the eigenvalues of the matrix $(\fSigma_1\rH{\fH_1}\fH_1 \fSigma_1)^{-1} + \fSigma_1^{-2})$ are larger than the eigenvalues of the matrix $(\vert\vert\fH_1 \fSigma_1\vert\vert^{-2}\I{k} +\fSigma_1^{-2})$. Therefore, the eigenvalues of $((\fSigma_1\rH{\fH_1}\fH_1 \fSigma_1)^{-1} + \fSigma_1^{-2})^{-1}$ are smaller than the eigenvalues of $(\vert\vert\fH_1 \fSigma_1\vert\vert^{-2}\I{k} +\fSigma_1^{-2})^{-1}$ and consequently also the trace.
Therefore
\begin{align*}
\tr((\fSigma_1\rH{\fH_1}\fH_1 \fSigma_1)^{-1} + \fSigma_1^{-2})^{-1}) &\leq  \tr((\vert\vert\fH_1 \fSigma_1\vert\vert_2^{-2}\I{k} + \fSigma_1^{-2})^{-1}) \\
&= \tr((\fSigma_1^{-1}(\vert\vert\fH_1 \fSigma_1\vert\vert_2^{-2}\fSigma_1^{2} + \I{k})\fSigma_1^{-1})^{-1})\\
&= \tr(\fSigma_1(\vert\vert\fH_1 \fSigma_1\vert\vert_2^{-2}\fSigma_1^{2} + \I{k})^{-1}\fSigma_1) \\
&= \vert\vert\fH_1 \fSigma_1\vert\vert_2^2\tr(\fSigma_1(\fSigma_1^{2} + \vert\vert\fH_1 \fSigma_1\vert\vert_2^2\I{k})^{-1}\fSigma_1)\\
&= \vert\vert\fH_1 \fSigma_1\vert\vert_2^2\sum\limits_{j = 1}^k \frac{\sigma_j^2}{\vert\vert\fH_1 \fSigma_1\vert\vert_2^2 + \sigma_j^2} \\
&\leq \vert\vert\fH_1 \fSigma_1\vert\vert_2^2 k \frac{\sigma_1^2}{\vert\vert\fH_1 \fSigma_1\vert\vert_2^2 + \sigma_1^2}\\
& =  \frac{k\vert\vert\fH_1 \fSigma_1\vert\vert_2^2}{\vert\vert\fH_1 \fSigma_1\vert\vert_2^2\sigma_1^{-2} +1}.
\end{align*}
Lastly, one has to estimate the norm $\vert\vert\fH_1\fSigma_1\vert\vert_2.$ It holds, that 
\begin{align*}
\vert\vert\fH_1\fSigma_1\vert\vert_2 & = \left\vert\left\vert\fSigma_3(\rT{\fSigma_3}\fSigma_3)^{\qpow}\fOmega_2\fOmega_1^\dagger\begin{pmatrix}
\fSigma_1^{-2\qpow+1} \\ \Z{l-s-k,k} \end{pmatrix}\fSigma_1\right\vert\right\vert_2 ß\\
&\leq \vert\vert\fSigma_3(\rT{\fSigma_3}\fSigma_3)^{\qpow}\vert\vert_2\vert\vert\fOmega_2\vert\vert_2\vert\vert\fOmega_1^\dagger\vert\vert_2 \vert\vert\fSigma_1^{-2q}\vert\vert_2 \\
&\leq \sigma_{l-s+1}\left(\frac{\sigma_{l-s+1}}{\sigma_{k}}\right)^{2\qpow}\vert\vert\fOmega_2\vert\vert_2\vert\vert\fOmega_1^\dagger\vert\vert_2,
\end{align*}
which concludes the proof.
\end{proof}
\begin{remark}
By a density argument this result also holds for general matrices $\fA\in \Cn^{m\times n}$ with $\mathrm{rank}(\fA)<\min(m,n)$ .
\end{remark}
Now, combining \Cref{lemma3,lemma4,lemma5} with \Cref{propSRFT} we prove the following: 
\begin{theorem}\label{thmadvbound}
If $4(\sqrt{k}+\sqrt{8\log(k\ns)})^2 \log(k) \leq l \leq \ns $, then the rcSVD basis matrix $\VrcSVD \in \R^{2N \times 2k}$ satisfies with failure probability $2/k$
\begin{align}
\vert\vert\Xs - \VrcSVD \rT{\VrcSVD}\Xs\vert\vert_F 
&\leq \sqrt{\left(\sum\limits_{j \geq k+1} \sigma_j^2 \right)+ \frac{\alpha^2\vert\vert\fOmega_2\vert\vert_2^2\vert\vert\fOmega_1^\dagger\vert\vert_2^2}{1+\gamma^2\vert\vert\fOmega_2\vert\vert_2^2\vert\vert\fOmega_1^\dagger\vert\vert_2^2}}  \\
&\leq\sqrt{ \left(\sum\limits_{j \geq k+1} \sigma_j^2\right) + \frac{6\sigma_{l+1}^2\left(\frac{\sigma_{l+1}}{\sigma_{k}}\right)^{4\qpow}k\ns/l}{1+6\frac{\sigma_{l+1}^2}{\sigma_1^2}\left(\frac{\sigma_{l+1}}{\sigma_{k}}\right)^{4\qpow}\ns/l}}\\
&\leq\sqrt{ \left(\sum\limits_{j \geq k+1} \sigma_j^2\right) + 6\sigma_{l+1}^2\left(\frac{\sigma_{l+1}}{\sigma_{k}}\right)^{4\qpow}k\ns/l}.
\end{align}
with $\sigma_j,j = 1,..,\ns$ the non-increasing sequence of singular values of the complex snapshot matrix 
$\Xc = \fQ + \im \fP,$ $\Xs\in \R^{2N \times \ns}.$
\end{theorem}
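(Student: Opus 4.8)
The plan is to reduce the real symplectic projection error to a complex low-rank approximation error and then invoke \Cref{propadvdetbound}. First I would reuse the identity established in the proof of \Cref{thmbound}, namely that for the ortho-symplectic matrix $\VrcSVD$ built from $\Ucr = \VQ + \im\VP$ one has $\vert\vert\Xs - \VrcSVD\rT{\VrcSVD}\Xs\vert\vert_F = \vert\vert\Xc - \Ucr\rH{(\Ucr)}\Xc\vert\vert_F$. The key observation is that the truncated factor produced by \Cref{alg:rCSVD} can be rewritten in the form appearing in \Cref{propadvdetbound}: since $\Ucr = \UY\UB(:,1:k)$ and $\fB = \rH{\UY}\Xc = \UB\fSigmaB\rH{\VB}$, we obtain $\Ucr\rH{(\Ucr)}\Xc = \UY\UB(:,1:k)\rH{(\UB(:,1:k))}\fB = \UY\fB_{(k)}$, where $\fB_{(k)}$ is the rank-$k$ truncated SVD of $\rH{\UY}\Xc$. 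Hence the projection error equals $\vert\vert\fA - \fQ\fB_{(k)}\vert\vert_F$ with $\fA := \Xc$ and $\fQ := \UY$ spanning the range of $\fY = \Xc(\rH{\Xc}\Xc)^{\qpow}\fOmega$, which is exactly the object bounded in \Cref{propadvdetbound}.

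Applying \Cref{propadvdetbound} directly then yields the first inequality of the theorem, with $\fOmega_1, \fOmega_2$ the row blocks of $\rH{\fV}\fOmega$ (here $\fV$ collects the right singular vectors of $\Xc$) and $\alpha, \gamma$ as defined there. The second step is to turn the abstract factor $\vert\vert\fOmega_2\vert\vert_2^2\vert\vert\fOmega_1^\dagger\vert\vert_2^2$ and the constants $\alpha, \gamma$ into the explicit singular-value expression. For the sketch norm I would use that, up to the scaling $\sqrt{\ns/l}$, the SRFT $\fOmega = \sqrt{\ns/l}\fD\fF\fR$ has orthonormal columns, so that $\vert\vert\fOmega_2\vert\vert_2 \le \vert\vert\fOmega\vert\vert_2 \le \sqrt{\ns/l}$, exactly as in the proof of \Cref{thmbound}. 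For $\vert\vert\fOmega_1^\dagger\vert\vert_2 = 1/\sigma_{\min}(\fOmega_1)$ I would invoke \Cref{propSRFT}: under the hypothesis $4(\sqrt{k} + \sqrt{8\log(k\ns)})^2\log(k)\le l$ it gives $\sigma_k(\rH{\fOmega}\fV_1)\ge 1/\sqrt{6}$ for the relevant leading right-singular-vector block, hence $\vert\vert\fOmega_1^\dagger\vert\vert_2^2 \le 6$, and it is precisely this step that contributes the failure probability $2/k$.

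Substituting $\vert\vert\fOmega_2\vert\vert_2^2\vert\vert\fOmega_1^\dagger\vert\vert_2^2 \le 6\ns/l$ together with the explicit $\alpha^2 = k\sigma_{l+1}^2(\sigma_{l+1}/\sigma_k)^{4\qpow}$ and $\gamma^2 = (\sigma_{l+1}^2/\sigma_1^2)(\sigma_{l+1}/\sigma_k)^{4\qpow}$ into $\frac{\alpha^2\vert\vert\fOmega_2\vert\vert_2^2\vert\vert\fOmega_1^\dagger\vert\vert_2^2}{1+\gamma^2\vert\vert\fOmega_2\vert\vert_2^2\vert\vert\fOmega_1^\dagger\vert\vert_2^2}$ gives the second inequality, where the factor $k$ from $\alpha^2$ and the $\ns/l$ from the sketch combine into the $k\ns/l$ in the numerator. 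The third inequality is then immediate from $1 + \gamma^2\vert\vert\fOmega_2\vert\vert_2^2\vert\vert\fOmega_1^\dagger\vert\vert_2^2 \ge 1$, i.e. simply dropping the (nonnegative) denominator. The main obstacle I expect is the correct bookkeeping of the free parameter $s$ of \Cref{propadvdetbound}: the singular-value index entering $\alpha,\gamma$ is $\sigma_{l-s+1}$, and one must choose $s$ so that \emph{simultaneously} (i) the block $\fOmega_1 = \rH{\fV(:,1:l-s)}\fOmega$ has enough rows for \Cref{propSRFT} to deliver $\vert\vert\fOmega_1^\dagger\vert\vert_2\le\sqrt{6}$ under the stated hypothesis on $l$, and (ii) the resulting index $l-s+1$ reproduces the $\sigma_{l+1}$ appearing in the claimed bound. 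Checking the admissibility of this choice, together with the standing assumptions that $\fOmega_1$ has full row rank and $\rank(\fY)=l$ so that \Cref{propadvdetbound} is applicable, is the delicate part; the remaining manipulations are the routine algebra already carried out in \Cref{lemma3,lemma4,lemma5}.
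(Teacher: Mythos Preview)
Your approach is essentially the same as the paper's. The paper's proof is even terser: it simply cites \Cref{lemma3,lemma4,lemma5} for the first inequality, bounds $\|\fOmega_2\|_2^2\le \ns/l$ and $\|\fOmega_1^\dagger\|_2^2\le 6$ for the second, and drops the denominator for the third. The identification $\|\Xc-\Ucr\rH{(\Ucr)}\Xc\|_F=\|\fA-\fQ\fB_{(k)}\|_F$ that you spell out is relegated in the paper to a remark immediately following the theorem; your derivation of it is correct. Regarding the $s$-bookkeeping you flag as the delicate point: the paper resolves this by taking $s=0$ (so that $\sigma_{l-s+1}=\sigma_{l+1}$), as stated in the second remark after the theorem, and treats the applicability of \Cref{propSRFT} to the resulting $\fOmega_1$ as routine rather than elaborating further.
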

\begin{proof}
The first inequality follows directly from the \Cref{lemma3,lemma4,lemma5}, the second one by bounding $\vert\vert\fOmega_2\vert\vert_2^2\leq \ns/l$ and $\vert\vert\fOmega_1^\dagger\vert\vert_2\leq 6$. The third one follows because $1+6\frac{\sigma_{l+1}}{\sigma_1}\left(\frac{\sigma_{l+1}}{\sigma_{k}}\right)^{4\qpow}\ns/l \geq 1.$
\end{proof}

\begin{remark}
These results also hold (by setting $\fQ = \UY$) for the truncated projection $$\vert\vert(\I{m} - \Ucr\rH{(\Ucr)})\fA\vert\vert_F^2 = \vert\vert(\I{m} -  \UY{\fU_\fB}_{(k)}\rH{{\fU_\fB}_{(k)}}\rH{\UY})\fA\vert\vert_F = \vert\vert\fA - \fQ\fB_{(k)}\vert\vert_F.$$
\end{remark}
\begin{remark}
The parameter $s$ is not a methodical parameter but can be used to optimize the bound as the norms of the random matrices depend on $s$. For Gaussian matrices $s \geq 2$ leads to lower norms of $\vert\vert\fOmega_2\vert\vert_2,\vert\vert\fOmega_1^\dagger\vert\vert_2$. For the SRFT we only have a bound for s = 0 (\Cref{propSRFT} or \cite[Theorem 11.2]{Halko2011}). 
\end{remark}
To understand the influence of power iterations, we compare this bound to the bound from \Cref{quasi_opt} which states

\begin{align*}
\vert\vert\Xs - \VrcSVD \rT{\VrcSVD}\Xs\vert\vert_F &\leq (\sqrt{1 +6\ns/l} + 1) \sqrt{\sum\limits_{j \geq k+1} \sigma_j^2} \\
&= \sqrt{\left(\sum\limits_{j \geq k+1} \sigma_j^2 \right)+6\ns/l\sum\limits_{j \geq k+1} \sigma_j^2} + \sqrt{\sum\limits_{j \geq k+1} \sigma_j^2}
\end{align*}

whereas with the Theorem from \cite{Gu2015} we get

\begin{align}
\vert\vert\Xs - \VrcSVD \rT{\VrcSVD}\Xs\vert\vert_F 
&\leq\sqrt{ \left(\sum\limits_{j \geq k+1} \sigma_j^2\right) + (6\ns/l)k\sigma_{l+1}^2\left(\frac{\sigma_{l+1}}{\sigma_{k}}\right)^{4\qpow}}. \label{simplbound}
\end{align}
Here, we do not have the additional term $ \sqrt{\sum\limits_{j \geq k+1} \sigma_j^2}$ that resulted from the truncation step (\ref{eqn:truncbound}). Furthermore, the second term under the  square root in \Cref {simplbound} has  the squared $(l+1)$-th singular value instead of the sum of all $\sigma_j^2, j\geq k+1$. Moreover, we have the additional factor $k\left(\frac{\sigma_{l+1}}{\sigma_{k}}\right)^{4\qpow}$. Thus, if this factor is smaller than one, the bound is sharper than the bound from \Cref{quasi_opt}. We can further simplify the bound from \Cref{simplbound} to
\begin{align}
\vert\vert\Xs - \VrcSVD \rT{\VrcSVD}\Xs\vert\vert_F 
&\leq\sqrt{ \left(\sum\limits_{j \geq k+1} \sigma_j^2\right) + (6\ns/l)k\sigma_{l+1}^2\left(\frac{\sigma_{l+1}}{\sigma_{k}}\right)^{4\qpow}}\\ &\leq \sqrt{1 + 6\ns\left(\frac{\sigma_{l+1}}{\sigma_{k}}\right)^{4\qpow}}\sqrt{\sum\limits_{j \geq k+1} \sigma_j^2}. \label{factor}
\end{align}
where the last estimate makes use of $$\sigma_{l+1}^2 \leq \sum\limits_{j \geq k+1} \sigma_j^2,$$ 
as $\sigma_{l+1}^2$ is appearing in that sum, and we use $k/l \leq 1.$

Here, we see that the factor $\sqrt{1 + 6\ns\left(\frac{\sigma_{l+1}}{\sigma_{k}}\right)^{4\qpow}}$ in \Cref{factor} converges to 1 for $q \to\infty$ if we assume there is a gap between the $k$-th and $(l+1)$-th singular value.

\section{Formulation of rcSVD based on real numbers}\label{sec:interpretability}
For the cSVD algorithm we know that there is an equivalent algorithm that works only with real matrices \cite{Buchfink2019} (the \textit{cSVD via POD of $\fYs$}). In this section, we show that also  the rcSVD algorithm can be reformulated into a version that works only with real matrices: 
\begin{proposition}
Given the snapshot matrix $\Xs \in \R^{2N\times\ns}$, basis  size $2k$, oversampling parameter $\povs$ and  power iteration number $\qpow$ define the sketched extended snapshot matrix $\fZ = \fYs (\rT \fYs \fYs)^{\qpow} \widetilde \fOmega$ with $\fYs$ the extended snapshot matrix $\fYs = [\Xs, \JtN\Xs]$ and 
$\widetilde \fOmega$ the block-structured random matrix
$$\widetilde \fOmega :=
\left[\begin{pmatrix}
\mathrm{Re}(\fOmega)\\
\mathrm{Im}(\fOmega) 
\end{pmatrix}, 
\TJtN\begin{pmatrix}
\mathrm{Re}(\fOmega)\\
\mathrm{Im}(\fOmega) 
\end{pmatrix}\right]$$ with $\fOmega \in \Cn^{\ns \times (k  + \povs)}.$
We assume that $2k$ is such that there is a gap in the singular values of $\fZ$, i.e., $\sigma_{2k} (\fZ) > \sigma_{2k+1}(\fZ)$. Then, \texttt{rcSVD}($\Xs, 2k,\povs,  \qpow$) can be computed as \texttt{POD}($\fZ, 2k$)
\end{proposition}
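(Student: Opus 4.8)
The plan is to reduce the statement to \Cref{alg:rCSVD} through the realification map that already underlies $\mathcal{A}$ from \Cref{sec:cSVD}, extended to all complex matrices by $\mathcal{A}(\fA + \im\fB) := \begin{pmatrix}\fA & -\fB\\\fB & \fA\end{pmatrix}$. First I would record its algebraic properties: $\mathcal{A}$ is additive and multiplicative, $\mathcal{A}(\rH\fC) = \rT{\mathcal{A}(\fC)}$, it maps matrices with orthonormal columns to matrices with orthonormal columns, and multiplication by $\im$ realifies to multiplication by $\TJtN$ (so $\mathcal{A}(\im\fC)=\TJtN\,\mathcal{A}(\fC)$). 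In particular $\VrcSVD = \mathcal{A}(\Ucr)$, so it suffices to recover $\mathcal{A}(\Ucr)$ from a real POD.

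Next I would relate the two carrier matrices. Since $\im$ realifies to $\TJtN = -\JtN$, one has $\mathcal{A}(\Xc) = [\Xs, \TJtN\Xs]$, whereas $\fYs = [\Xs, \JtN\Xs]$; thus $\fYs = \mathcal{A}(\Xc)\,\fS$ with the block-sign matrix $\fS := \blkdiag(\I{\ns}, -\I{\ns})$. Because $\fS$ is orthogonal and sits on the right, the Gram matrix is only conjugated, $\rT\fYs\fYs = \fS\,\mathcal{A}(\rH\Xc\Xc)\,\fS$, and therefore the real power iteration reproduces the complex one: $\fYs(\rT\fYs\fYs)^{\qpow} = \mathcal{A}\!\left(\Xc(\rH\Xc\Xc)^{\qpow}\right)\fS = \mathcal{A}(\fW)\,\fS$ with $\fW := \Xc(\rH\Xc\Xc)^{\qpow}$.

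Then I would identify the random matrix. A direct block computation gives $\widetilde\fOmega = \mathcal{A}(\fOmega)$, since $\TJtN[\Re\fOmega; \Im\fOmega] = [-\Im\fOmega; \Re\fOmega]$ is exactly the realification of $\im\fOmega$. Combining the steps yields $\fZ = \mathcal{A}(\fW)\,\fS\,\mathcal{A}(\fOmega)$, and the intertwining identity $\fS\,\mathcal{A}(\fOmega) = \mathcal{A}(\overline\fOmega)\,\fS_l$ (with $\fS_l := \blkdiag(\I{l}, -\I{l})$, $l = k+\povs$) collapses this to $\fZ = \mathcal{A}(\fW\overline\fOmega)\,\fS_l$. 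Hence $\fZ$ is, up to the harmless right sign flip $\fS_l$, the realification of the complex rcSVD sketch $\widehat\fY := \fW\overline\fOmega = \Xc(\rH\Xc\Xc)^{\qpow}\overline\fOmega$; the conjugation is immaterial because $\overline\fOmega$ is again an SRFT of the same law (conjugating $\fD$ keeps its entries uniform on the unit circle, $\overline\fF$ is unitary, and $\fR$ is real).

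Finally I would pass to the POD. From $\fZ = \mathcal{A}(\widehat\fY)\,\fS_l$ we obtain $\fZ\rT\fZ = \mathcal{A}(\widehat\fY)\rT{\mathcal{A}(\widehat\fY)} = \mathcal{A}(\widehat\fY\rH{\widehat\fY})$, so the left singular structure of $\fZ$ is governed by the Hermitian positive semidefinite $\widehat\fY\rH{\widehat\fY}$: if $(\sigma_j^2, \fu_j)$ are its eigenpairs, then $\mathcal{A}$ doubles every eigenvalue and produces eigenvectors in ortho-symplectic pairs $\{\fv_j, \TJtN\fv_j\}$, $\fv_j$ realifying $\fu_j$ --- the same mechanism that makes the POD of $\mathcal{A}(\Xc)$ symplectic in the cSVD case. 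Under the gap assumption the leading $2k$ directions are well defined, so \texttt{POD}($\fZ, 2k$) returns a realification of leading complex left singular directions of $\widehat\fY$. The remaining, and decisive, step is to show that this real truncation reproduces the two-stage truncation of \Cref{alg:rCSVD}, i.e. that the leading $2k$ left singular vectors of $\fZ$ equal $\mathcal{A}(\Ucr)$ with $\Ucr = \UY\UB(:,1{:}k)$. I expect this to be the main obstacle: one must check that realification commutes with first forming the range basis $\UY$ and then re-ordering by the rank-$k$ SVD of $\fB = \rH\UY\Xc$, rather than with ordering by the singular values of the sketch alone; the realification identities of the preceding steps are, by comparison, routine bookkeeping once the $\JtN$-versus-$\TJtN$ sign and the induced conjugation of $\fOmega$ are tracked.
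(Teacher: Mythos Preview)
Your approach is correct and arrives at the same conclusion as the paper, but by a cleaner route. The paper proves that $\fZ$ equals the realification of the complex sketch $\fY = \Xc(\rH\Xc\Xc)^{\qpow}\fOmega$ through explicit block multiplications and an induction on $\qpow$; you obtain the same identity in one line from the multiplicativity of $\mathcal{A}$ together with $\fS^2 = \I{}$, which is conceptually preferable. You also correctly catch a sign discrepancy: the proposition states $\fYs = [\Xs, \JtN\Xs]$, but the paper's proof silently works with $\fYs = \begin{pmatrix}\fQ & -\fP\\ \fP & \fQ\end{pmatrix} = [\Xs, \TJtN\Xs] = \mathcal{A}(\Xc)$. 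Your handling via the sign matrix $\fS$ and the induced conjugation $\overline\fOmega$ is a correct workaround for the statement as written, though the tidier reading is that the proposition carries a typo and $\TJtN$ is intended, in which case no conjugation appears and your distributional argument about $\overline\fOmega$ becomes unnecessary.

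Your caution about the two-stage truncation (steps 5--7 of \Cref{alg:rCSVD}) is legitimate, but the paper does not resolve it either: its proof treats only the SVD of $\fY$ (step 4), invokes \cite{Buchfink2019} for the equivalence between a complex SVD and the POD of its realification under a spectral gap, and declares this ``the main step.'' In particular, \texttt{POD}$(\fZ, 2k)$ returns $\mathcal{A}(\UY(:,1{:}k))$, which in general differs from $\mathcal{A}(\Ucr)$ with $\Ucr = \UY\UB(:,1{:}k)$; the proposition is evidently meant loosely, as ``the real-arithmetic computation starts from a POD of $\fZ$,'' rather than as a literal equality of outputs. So what you flag as the decisive remaining step is not a deficiency of your plan relative to the paper --- it is a point both arguments leave open.
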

\begin{proof}
The main step of the rcSVD procedure is to compute an SVD of the complex matrix $\fY := \Xc (\rH \Xc \Xc)^{\qpow}\fOmega$. According to \cite{Buchfink2019} this is equivalent to computing an SVD of $$\fZ:=[[\text{Re}(\fY); \text{Im}(\fY)], \TJtN [\text{Re}(\fY); \text{Im}(\fY)]]$$ if there is a gap in the singular values of $\fZ$.

With the definition $\fM:= \Xc (\rH \Xc \Xc)^{\qpow}$ we get
$$\text{Re}(\fY) = \text{Re}(\fM)\text{Re}(\fOmega) - \text{Im}(\fM)\text{Im}(\fOmega)$$ 
and 
$$\text{Im}(\fY) = \text{Re}(\fM)\text{Im}(\fOmega) + \text{Im}(\fM)\text{Re}(\fOmega)$$
we can reformulate
\begin{align*}
\begin{pmatrix}
\text{Re}(\fY) \\
\text{Im}(\fY)
\end{pmatrix}
&= \begin{pmatrix}
\text{Re}(\fM) & -\text{Im}(\fM) \\
\text{Im}(\fM) & \text{Re}(\fM)
\end{pmatrix}
\begin{pmatrix}
\text{Re}(\fOmega)\\
\text{Im}(\fOmega)
\end{pmatrix}
\end{align*}
and 
\begin{align*}
\TJtN
\begin{pmatrix}
\text{Re}(\fY) \\
\text{Im}(\fY)
\end{pmatrix}
=
\begin{pmatrix}
-\text{Im}(\fY) \\
\text{Re}(\fY)
\end{pmatrix}
= \begin{pmatrix}
\text{Re}(\fM) & -\text{Im}(\fM) \\
\text{Im}(\fM) & \text{Re}(\fM)
\end{pmatrix}
\begin{pmatrix}
-\text{Im}(\fOmega)\\
\text{Re}(\fOmega)
\end{pmatrix}.
\end{align*}
From this, it follows that
\begin{align}
\fZ&=[[\text{Re}(\fY); \text{Im}(\fY)], \TJtN [\text{Re}(\fY); \text{Im}(\fY)]] \\
&= 
\begin{pmatrix}
\text{Re}(\fM) & -\text{Im}(\fM) \\
\text{Im}(\fM) & \text{Re}(\fM)
\end{pmatrix}
\begin{pmatrix}
\text{Re}(\fOmega)&-\text{Im}(\fOmega)\\
\text{Im}(\fOmega)&\text{Re}(\fOmega)
\end{pmatrix} \\
& = 
\left[\begin{pmatrix}
\text{Re}(\fM)\\
\text{Im}(\fM) 
\end{pmatrix}, 
\TJtN\begin{pmatrix}
\text{Re}(\fM)\\
\text{Im}(\fM) 
\end{pmatrix}\right] \widetilde \fOmega,
\label{eqnrealint1}
\end{align}
where $\widetilde \fOmega$ is defined as $$\widetilde \fOmega :=
\left[\begin{pmatrix}
\text{Re}(\fOmega)\\
\text{Im}(\fOmega) 
\end{pmatrix}, 
\TJtN\begin{pmatrix}
\text{Re}(\fOmega)\\
\text{Im}(\fOmega) 
\end{pmatrix}\right].$$
Further it holds that
\begin{align*}
\left[\begin{pmatrix}
\text{Re}(\fM)\\
\text{Im}(\fM) 
\end{pmatrix}, 
\TJtN\begin{pmatrix}
\text{Re}(\fM)\\
\text{Im}(\fM) 
\end{pmatrix}\right]  = \fYs (\rT \fYs \fYs)^{\qpow}
\end{align*}
with $$\fYs:= \begin{pmatrix}
\fQ & -\fP\\
\fP &\fQ
\end{pmatrix}.$$
This can be seen by induction: 
Clearly this equation holds for $\qpow = 0$ by definition of $\Xc$ and $\fYs$. We now assume that the equation holds for some $\qpow \in \N_0.$
Then
\begin{align*}
\fYs (\rT \fYs \fYs)^{\qpow+1} &=  \fYs (\rT \fYs \fYs)^{\qpow}\rT \fYs \fYs \\
&= \left[\begin{pmatrix}
\text{Re}(\fM)\\
\text{Im}(\fM) 
\end{pmatrix}, 
\TJtN\begin{pmatrix}
\text{Re}(\fM)\\
\text{Im}(\fM) 
\end{pmatrix}\right]\rT \fYs \fYs \\
&= \begin{pmatrix}
\text{Re}(\fM) &-\text{Im}(\fM)\\
\text{Im}(\fM) &\text{Re}(\fM) 
\end{pmatrix}\rT \fYs \fYs.
\end{align*}
Furthermore, we have
\begin{align*}
\rT \fYs \fYs &=
\begin{pmatrix}
\rT\fQ & \rT\fP\\
-\rT\fP &\rT\fQ
\end{pmatrix}
\begin{pmatrix}
\fQ & -\fP\\
\fP &\fQ
\end{pmatrix}
=
\begin{pmatrix}
\rT\fQ\fQ + \rT\fP\fP & -\rT\fQ\fP + \rT\fP\fQ \\
\rT\fQ\fP - \rT\fP\fQ &
\rT\fQ\fQ + \rT\fP\fP
\end{pmatrix} \\
&=
\begin{pmatrix}
\text{Re}(\rH\Xc\Xc) &-\text{Im}(\rH\Xc\Xc) \\
\text{Im}(\rH\Xc\Xc) &\text{Re}(\rH\Xc\Xc)
\end{pmatrix}.
\end{align*}
Therefore, it holds that 
\begin{align*}
&\fYs (\rT \fYs \fYs)^{\qpow+1} = \begin{pmatrix}
\text{Re}(\fM) &-\text{Im}(\fM)\\
\text{Im}(\fM) &\text{Re}(\fM) 
\end{pmatrix}
\begin{pmatrix}
\text{Re}(\rH\Xc\Xc) &-\text{Im}(\rH\Xc\Xc) \\
\text{Im}(\rH\Xc\Xc) &\text{Re}(\rH\Xc\Xc)
\end{pmatrix}\\
&= \begin{pmatrix}
\fM_1 & \fM_2 \\
\fM_3 & \fM_4 
\end{pmatrix}\\
&= \begin{pmatrix}
\text{Re}(\fM\rH\Xc\Xc) 
&-\text{Im}(\fM\rH\Xc\Xc)  \\
\text{Im}(\fM\rH\Xc\Xc)  
& \text{Re}(\fM\rH\Xc\Xc) 
\end{pmatrix}\\
&=\left[\begin{pmatrix}
\text{Re}(\Xc(\rH \Xc \Xc)^{\qpow+1})\\
\text{Im}(\Xc(\rH \Xc \Xc)^{\qpow+1}) 
\end{pmatrix}, 
\TJtN\begin{pmatrix}
\text{Re}(\Xc(\rH \Xc \Xc)^{\qpow+1})\\
\text{Im}(\Xc(\rH \Xc \Xc)^{\qpow+1}) 
\end{pmatrix}\right],
\end{align*}
where \begin{align*}
\fM_1 &= \text{Re}(\fM)\text{Re}(\rH\Xc\Xc) -\text{Im}(\fM)\text{Im}(\rH\Xc\Xc) \\
\fM_2 &=-\text{Im}(\fM)\text{Re}(\rH\Xc\Xc) -\text{Re}(\fM)\text{Im}(\rH\Xc\Xc)  \\
\fM_3 &= \text{Im}(\fM)\text{Re}(\rH\Xc\Xc) +\text{Re}(\fM)\text{Im}(\rH\Xc\Xc)  \\
\fM_4 &= \text{Re}(\fM)\text{Re}(\rH\Xc\Xc) -\text{Im}(\fM)\text{Im}(\rH\Xc\Xc).
\end{align*}
Thus by the induction principle it follows that
\begin{align*}
\left[\begin{pmatrix}
\text{Re}(\fM)\\
\text{Im}(\fM) 
\end{pmatrix}, 
\TJtN\begin{pmatrix}
\text{Re}(\fM)\\
\text{Im}(\fM) 
\end{pmatrix}\right]  = \fYs (\rT \fYs \fYs)^{\qpow}
\end{align*}
for all $\qpow \in \N_0.$
Plugging this result into (\ref{eqnrealint1}) yields
\begin{align*}
\fZ & =  \fYs (\rT \fYs \fYs)^{\qpow} \widetilde \fOmega.
\end{align*}
\end{proof}

Therefore, the rcSVD can also be understood as \textit{rcSVD via POD of $\fYs (\rT \fYs \fYs)^{\qpow} \widetilde \fOmega$} or \textit{rcSVD via rPOD of $\fYs$} using a special block-structured random matrix $\widetilde \fOmega$ for the random sketching. This can be a useful equivalent characterization that allows results for real matrices to be applied when they are not available for complex matrices. Additionally, a numerical advantage may be a more easy  implementation as only real instead of complex arithmetics is required.

\section{Numerical Experiments}\label{Numerics}
To analyze what are practical choices for the oversampling parameter $\povs$ and the number of power iterations $\qpow$, we perform numerical experiments on a 2D wave equation problem. This example has originally been used in \cite{Peng2016} as a non-parametric one-dimensional problem and has been extended to the parametric two-dimensional case in \cite{Herkert2023dict}. The problem reads: Find the solution $u(t, \fxi)$ with spatial variable $$\fxi := (\xi_1, \xi_2) \in \varOmega := (0, 0.5) \times (0, 3)$$ and temporal variable $ t \in \It(\fmu) := [\tInit, \tEnd(\fmu)],$ with $\tInit = 0,\, \tEnd(\fmu) = 2/\fmu$ of

\begin{align*}
u_{tt}(t, \fxi) &= c^2 \Delta u(t, \fxi) &&&\textrm{in } \It(\fmu)\times\varOmega\\
u(t_0, \fxi) &= u^0(\fxi) := h(s(\fxi)), &&&\textrm{in } \varOmega,\\
u_t (t_0, \fxi) &= v^0(\fxi) &&&\textrm{in } \varOmega,\\
u(t, \fxi) &= 0 &&&\textrm{in }  \It(\fmu)\times\partial\varOmega, 
\end{align*}
where
\[s(\fxi): = 4\left\vert\left(\xi_2 + \frac{l}{2} - \frac{u^0_{sup}}{2}\right)\bigg/u^0_{sup}\right\vert, \ 
h (s): = \begin{cases}
  1 - \frac{3}{2}s^2 + \frac{3}{4}s^3,
& 0 \leq s \leq 1, \\
  \frac{1}{4} (2 - s)^3,
& 1 < s \leq 2,  \\
  0,
& s > 2,
\end{cases}  \]
and
\[v^0(\fxi):=\begin{cases}-\frac{4c}{u^0_{sup}} d_h(s(\fxi)), &\xi_2 + \frac{l}{2} - \frac{u^0_{sup}}{2} \geq 0\\
\frac{4c}{u^0_{sup}} d_h(s(\fxi)), &\xi_2 + \frac{l}{2} - \frac{u^0_{sup}}{2}<0
\end{cases},\] 
\[d_h (s): =
\begin{cases}
  (- 3s + \frac{9}{4}^2),
& 0 \leq s \leq 1, \\
  \frac{3}{4}(2 - s)^2,
& 1 < s\leq 2,  \\
  0,
&  s > 2.
\end{cases}  \]
We fix $u_{sup}^0 = 2$ and choose $\fmu = c \in\mathcal{P} := [1,2]$ as parameter (vector).
Central finite differences are used for the spatial discretization and the system is transformed into a first order ODE. This leads to the Hamiltonian system
\begin{equation}\label{wave_discr}
\ddt \fx(t; \fmu)  = \JtN \grad[\fx] \Ham(\fx(t; \fmu); \fmu) = \JtN\fH(\fmu)\fx(t; \fmu), \quad \fx(0; \fmu) = \fxInit(\fmu),
\end{equation}
where
$$\fH(\fmu) =\begin{pmatrix}
\fmu^2(\fD_{{\xi_1}{\xi_1}}+\fD_{{\xi_2}{\xi_2}}) \ &\Z{N} \\
 \Z{N}\  & \I{N}
\end{pmatrix}$$ 
and
$$\fxInit(\fmu) = [u^0(\fxi_1), ...,u^0(\fxi_N),v^0(\fxi_1), ...,v^0(\fxi_N)]$$
with $\{ \fxi_i \}_{i=1}^N \subset \varOmega$ being the grid points.
We denote the three-point central difference approximations in $\xi_1$-direction and in $\xi_2$-direction with the positive definite matrices $\fD_{{\xi_1}{\xi_1}}\in \R^{N \times N}$ and $\fD_{{\xi_2}{\xi_2}} \in \R^{N \times N}.$  Here, the generalized position and generalized momentum are the displacement at each grid point and the velocity at each grid point. 
The number of grid points including boundary points in $\xi_1$ is chosen as $N_{\xi_1} = 50$ and the number of grid points in $\xi_2$-direction is chosen as $N_{\xi_2} = 300$. The grid points are distributed equidistantly along each axis. This results in a Hamiltonian system of dimension of $2N = 15000$ with Hamiltonian 
$$\Ham(\fx, \fmu) = \frac{1}{2}\rT\fx\fH(\fmu)\fx. $$ \\
The implicit midpoint rule is a symplectic integrator \cite{hairer2006} that preserves quadratic Hamiltonians.
Moreover, we choose it with $n_t = 1500 $ equidistant time steps for temporal discretization. Since $\tEnd(\fmu)$ is parameter dependent this leads to different time step sizes for different parameters.  The parameters $\fmu_j =1 + 0.1 j,\ j = 0,...10 $ are used for the computation of the snapshot matrix $\Xs \in \R^{15000 \times 16500}$.  For the first experiment we compare the projection errors
$$e_\mathrm{proj}(\fV) = ||\Xs - \fV\rT{\fV}\Xs||_F^2$$
of the rcSVD bases $\fV \in \R^{2N \times k}$ for $k = 10, 20, 40, 80, 160$, $\qpow = 0,2,5,$ and $\povs = 5, 20,  l-k$ with   $l = 4(\sqrt{k}+\sqrt{8\log(k\ns)})^2 \log(k)$. We further include the projection error of the cSVD basis for comparison of the approximation quality and present the results in \Cref{fig:errors}.
\begin{figure}[h]
\begin{minipage}[c]{0.95\textwidth}
\begin{tikzpicture}

\definecolor{darkgray176}{RGB}{176,176,176}
\definecolor{darkorange25512714}{RGB}{255,127,14}
\definecolor{forestgreen4416044}{RGB}{44,160,44}
\definecolor{lightgray204}{RGB}{204,204,204}
\definecolor{steelblue31119180}{RGB}{31,119,180}

\begin{groupplot}[group style={group name=err_plots, group size=2 by 2}, scale = 0.7]
\nextgroupplot[
log basis x={10},
log basis y={10},
tick align=outside,
tick pos=left,
legend to name=leg0,
legend cell align={left},
title={\scriptsize Errors over basis size, \(\displaystyle q_{pow}\) = 0},
x grid style={darkgray176},
xlabel={basis size},
xmin=8.70550563296124, xmax=183.791736799526,
xmode=log,
xtick style={color=black},
xtick={0.1,1,10,100,1000,10000},
xticklabels={
  \(\displaystyle {10^{-1}}\),
  \(\displaystyle {10^{0}}\),
  \(\displaystyle {10^{1}}\),
  \(\displaystyle {10^{2}}\),
  \(\displaystyle {10^{3}}\),
  \(\displaystyle {10^{4}}\)
},
y grid style={darkgray176},
ylabel={projection error},
ymin=5.45507639350429e-10, ymax=827.07216889748,
ymode=log,
ytick style={color=black},
ytick={1e-13,1e-10,1e-07,0.0001,0.1,100,100000,100000000},
yticklabels={
  \(\displaystyle {10^{-13}}\),
  \(\displaystyle {10^{-10}}\),
  \(\displaystyle {10^{-7}}\),
  \(\displaystyle {10^{-4}}\),
  \(\displaystyle {10^{-1}}\),
  \(\displaystyle {10^{2}}\),
  \(\displaystyle {10^{5}}\),
  \(\displaystyle {10^{8}}\)
}
]
\addlegendentry{cSVD}
\addlegendentry{rcSVD, $p_\text{pow} = 5$}
\addlegendentry{rcSVD, $p_\text{pow} = 20$}
\addlegendentry{rcSVD, $p_\text{pow} = l-k$}
\addplot [red, mark=square*, mark size=4, mark options={solid}]
table {%
10 275.690722965827
20 40.8957757119021
40 6.74195630292087
80 1.02819073524068
160 1.63652291805129e-09
};
\addplot [steelblue31119180, dash pattern=on 11.1pt off 4.8pt, mark=triangle*, mark size=4, mark options={solid,rotate=180}]
table {%
10 322.973999265143
20 65.9673669840127
40 13.5213794095001
80 2.28562001068129
160 5.48385025749044e-09
};
\addplot [darkorange25512714, dash pattern=on 19.2pt off 4.8pt on 3pt off 4.8pt, mark=triangle*, mark size=4, mark options={solid}]
table {%
10 276.538954863481
20 42.8645827392132
40 8.37642744940887
80 1.52143846428038
160 3.97081987983135e-09
};
\addplot [forestgreen4416044, dash pattern=on 3pt off 4.95pt, mark=triangle*, mark size=4, mark options={solid,rotate=270}]
table {%
10 275.690722965857
20 40.8957757118939
40 6.74195630292069
80 1.02819073524076
160 1.62392172448573e-09
};

\nextgroupplot[
log basis x={10},
log basis y={10},
tick align=outside,
tick pos=left,
title={\scriptsize Errors over basis size, \(\displaystyle q_{pow}\) = 2},
x grid style={darkgray176},
xlabel={basis size},
xmin=8.70550563296124, xmax=183.791736799526,
xmode=log,
xtick style={color=black},
xtick={0.1,1,10,100,1000,10000},
xticklabels={
  \(\displaystyle {10^{-1}}\),
  \(\displaystyle {10^{0}}\),
  \(\displaystyle {10^{1}}\),
  \(\displaystyle {10^{2}}\),
  \(\displaystyle {10^{3}}\),
  \(\displaystyle {10^{4}}\)
},
y grid style={darkgray176},
ylabel={projection error},
ymin=5.45507639350429e-10, ymax=827.07216889748,
ymode=log,
ytick style={color=black},
ytick={1e-13,1e-10,1e-07,0.0001,0.1,100,100000,100000000},
yticklabels={
  \(\displaystyle {10^{-13}}\),
  \(\displaystyle {10^{-10}}\),
  \(\displaystyle {10^{-7}}\),
  \(\displaystyle {10^{-4}}\),
  \(\displaystyle {10^{-1}}\),
  \(\displaystyle {10^{2}}\),
  \(\displaystyle {10^{5}}\),
  \(\displaystyle {10^{8}}\)
}
]
\addplot [red, mark=square*, mark size=4, mark options={solid}]
table {%
10 275.690722965827
20 40.8957757119021
40 6.74195630292087
80 1.02819073524068
160 1.63652291805129e-09
};
\addplot [steelblue31119180, dash pattern=on 11.1pt off 4.8pt, mark=triangle*, mark size=4, mark options={solid,rotate=180}]
table {%
10 275.690739468509
20 40.9025419916391
40 6.75586141471457
80 1.03617555430354
160 1.66130129199939e-09
};
\addplot [darkorange25512714, dash pattern=on 19.2pt off 4.8pt on 3pt off 4.8pt, mark=triangle*, mark size=4, mark options={solid}]
table {%
10 275.690722965824
20 40.8957757134044
40 6.74196214090027
80 1.02826665655755
160 1.65187728881599e-09
};
\addplot [forestgreen4416044, dash pattern=on 3pt off 4.95pt, mark=triangle*, mark size=4, mark options={solid,rotate=270}]
table {%
10 275.690722965857
20 40.8957757118937
40 6.74195630292063
80 1.02819073524075
160 1.67040382745823e-09
};
\nextgroupplot[
log basis x={10},
log basis y={10},
tick align=outside,
tick pos=left,
title={\scriptsize Errors over basis size, \(\displaystyle q_{pow}\) = 5},
x grid style={darkgray176},
xlabel={basis size},
xmin=8.70550563296124, xmax=183.791736799526,
xmode=log,
xtick style={color=black},
xtick={0.1,1,10,100,1000,10000},
xticklabels={
  \(\displaystyle {10^{-1}}\),
  \(\displaystyle {10^{0}}\),
  \(\displaystyle {10^{1}}\),
  \(\displaystyle {10^{2}}\),
  \(\displaystyle {10^{3}}\),
  \(\displaystyle {10^{4}}\)
},
y grid style={darkgray176},
ylabel={projection error},
ymin=5.45507639350429e-10, ymax=827.07216889748,
ymode=log,
ytick style={color=black},
ytick={1e-13,1e-10,1e-07,0.0001,0.1,100,100000,100000000},
yticklabels={
  \(\displaystyle {10^{-13}}\),
  \(\displaystyle {10^{-10}}\),
  \(\displaystyle {10^{-7}}\),
  \(\displaystyle {10^{-4}}\),
  \(\displaystyle {10^{-1}}\),
  \(\displaystyle {10^{2}}\),
  \(\displaystyle {10^{5}}\),
  \(\displaystyle {10^{8}}\)
}
]
\addplot [red, mark=square*, mark size=4, mark options={solid}]
table {%
10 275.690722965827
20 40.8957757119021
40 6.74195630292087
80 1.02819073524068
160 1.63652291805129e-09
};
\addplot [steelblue31119180, dash pattern=on 11.1pt off 4.8pt, mark=triangle*, mark size=4, mark options={solid,rotate=180}]
table {%
10 275.690722965814
20 40.8957759124426
40 6.74218883565127
80 1.02842080764319
160 1.64011020884186e-09
};
\addplot [darkorange25512714, dash pattern=on 19.2pt off 4.8pt on 3pt off 4.8pt, mark=triangle*, mark size=4, mark options={solid}]
table {%
10 275.690722965857
20 40.8957757118938
40 6.74195630292306
80 1.02819074156573
160 1.62904855754085e-09
};
\addplot [forestgreen4416044, dash pattern=on 3pt off 4.95pt, mark=triangle*, mark size=4, mark options={solid,rotate=270}]
table {%
10 275.690722965857
20 40.8957757118938
40 6.74195630292069
80 1.02819073524072
160 1.68540774595698e-09
};
\nextgroupplot[
hide x axis,
hide y axis,
legend cell align={left},
legend style={fill opacity=0.8, draw opacity=1, text opacity=1, draw=lightgray204},
tick align=outside,
tick pos=left,
x grid style={darkgray176},
xmin=0, xmax=1,
xtick style={color=black},
y grid style={darkgray176},
ymin=0, ymax=1,
ytick style={color=black}
]
\end{groupplot}

\node[
   right = 9.3cm,
   below = 6.0cm,
    align=left,
    draw=black,
    anchor=south,
  ] {%
  \ref*{leg0}
  };
\end{tikzpicture}
\end{minipage}

\caption{Projection error for different values for $\povs$ and $\qpow$}
\label{fig:errors}
\end{figure}
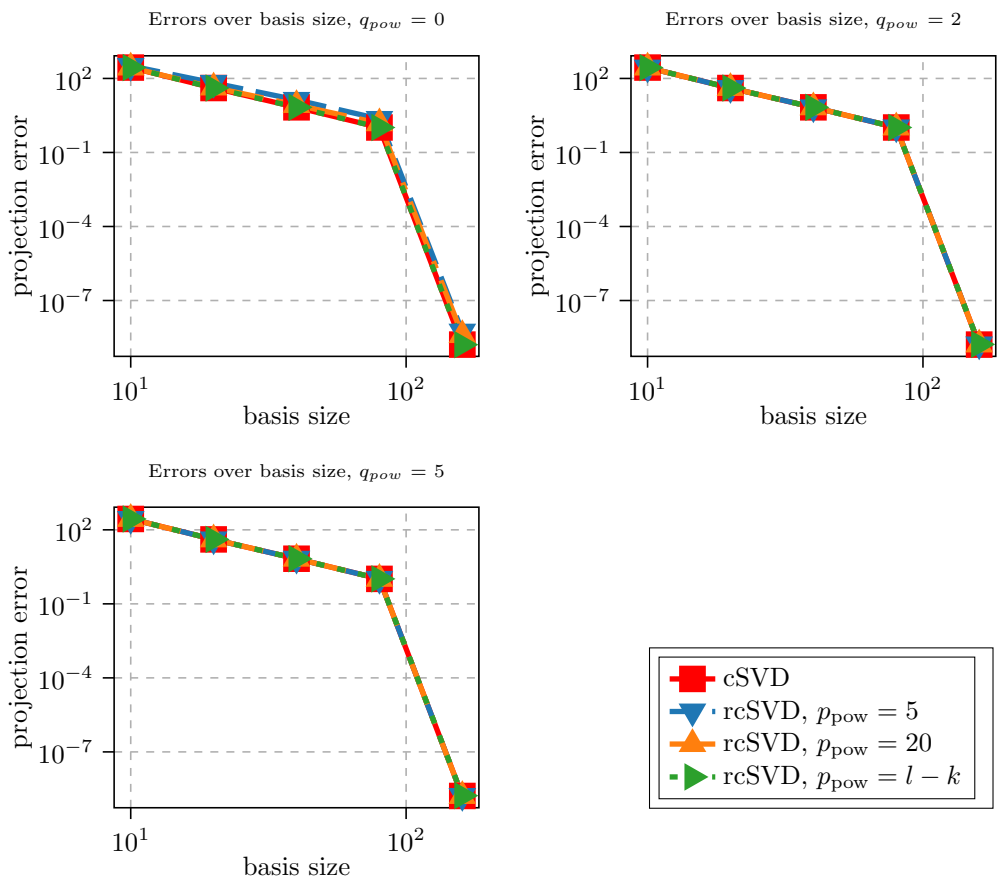

We observe that the rcSVD yields a very good approximation for all tested values of $\qpow$ and $\povs$. Especially when choosing $\qpow > 0$ the projection errors are almost equal to the projection error of cSVD. For $\qpow = 0$ we observe that increasing $\povs$ slightly improves the projection error $e_\mathrm{proj}(\fV)$. For higher values of $\qpow$ no influence of $\povs$ on the error can be observed since  it equals the best approximation error of cSVD already for $\povs = 5.$ Therefore, we conclude that in practice much smaller values for $\povs$ than $\povs = l-k$ with $l = 4(\sqrt{k}+\sqrt{8\log(k\ns)})^2 \log(k)$ can be used. 

In order to highlight the computational advantages of the randomized algorithms, we present the average runtimes (averaged over 5 runs each) in \Cref{runtime}. They are measured on a computer with 64 GB RAM and a 13th Gen Intel i7-13700K processor. The experiments are implemented in Python 3.8.10 using numpy 1.24.3 and scipy 1.10.1.

\begin{figure}[h]
\begin{tikzpicture}

\definecolor{darkgray176}{RGB}{176,176,176}
\definecolor{darkorange25512714}{RGB}{255,127,14}
\definecolor{forestgreen4416044}{RGB}{44,160,44}
\definecolor{lightgray204}{RGB}{204,204,204}
\definecolor{steelblue31119180}{RGB}{31,119,180}

\begin{groupplot}[group style={group size=2 by 2}, scale = 0.7]
\nextgroupplot[
log basis x={10},
log basis y={10},
tick align=outside,
tick pos=left,
title={\scriptsize Basis generation time, \(\displaystyle q_{pow}\) = 0},
x grid style={darkgray176},
xlabel={basis size},
xmin=8.70550563296124, xmax=183.791736799526,
xmode=log,
xtick style={color=black},
xtick={0.1,1,10,100,1000,10000},
xticklabels={
  \(\displaystyle {10^{-1}}\),
  \(\displaystyle {10^{0}}\),
  \(\displaystyle {10^{1}}\),
  \(\displaystyle {10^{2}}\),
  \(\displaystyle {10^{3}}\),
  \(\displaystyle {10^{4}}\)
},
y grid style={darkgray176},
ylabel={basis generation time},
ymin=2.24370986620585, ymax=24732.726274538,
ymode=log,
ytick style={color=black},
ytick={0.1,1,10,100,1000,10000,100000,1000000},
yticklabels={
  \(\displaystyle {10^{-1}}\),
  \(\displaystyle {10^{0}}\),
  \(\displaystyle {10^{1}}\),
  \(\displaystyle {10^{2}}\),
  \(\displaystyle {10^{3}}\),
  \(\displaystyle {10^{4}}\),
  \(\displaystyle {10^{5}}\),
  \(\displaystyle {10^{6}}\)
}
]
\addplot [red, mark=square*, mark size=4, mark options={solid}]
table {%
10 1241.60421490669
20 1241.60421490669
40 1241.60421490669
80 1241.60421490669
160 1241.60421490669
};
\addplot [steelblue31119180, mark=triangle*, mark size=4, mark options={solid,rotate=180}]
table {%
10 6.73112959861755
20 6.96477661132812
40 7.0986035823822
80 7.39678516387939
160 8.23601088523865
};
\addplot [darkorange25512714, mark=triangle*, mark size=4, mark options={solid}]
table {%
10 7.00982303619385
20 7.22446451187134
40 7.53470883369446
80 7.54338240623474
160 8.3738630771637
};
\addplot [forestgreen4416044, mark=triangle*, mark size=4, mark options={solid,rotate=270}]
table {%
10 30.4527867794037
20 56.456690788269
40 126.828850269318
80 340.287944078445
160 1314.84462714195
};

\nextgroupplot[
log basis x={10},
log basis y={10},
tick align=outside,
tick pos=left,
title={\scriptsize Basis generation time, \(\displaystyle q_{pow}\) = 2},
x grid style={darkgray176},
xlabel={basis size},
xmin=8.70550563296124, xmax=183.791736799526,
xmode=log,
xtick style={color=black},
xtick={0.1,1,10,100,1000,10000},
xticklabels={
  \(\displaystyle {10^{-1}}\),
  \(\displaystyle {10^{0}}\),
  \(\displaystyle {10^{1}}\),
  \(\displaystyle {10^{2}}\),
  \(\displaystyle {10^{3}}\),
  \(\displaystyle {10^{4}}\)
},
y grid style={darkgray176},
ylabel={basis generation time},
ymin=2.24370986620585, ymax=24732.726274538,
ymode=log,
ytick style={color=black},
ytick={0.1,1,10,100,1000,10000,100000,1000000},
yticklabels={
  \(\displaystyle {10^{-1}}\),
  \(\displaystyle {10^{0}}\),
  \(\displaystyle {10^{1}}\),
  \(\displaystyle {10^{2}}\),
  \(\displaystyle {10^{3}}\),
  \(\displaystyle {10^{4}}\),
  \(\displaystyle {10^{5}}\),
  \(\displaystyle {10^{6}}\)
}
]
\addplot [red, mark=square*, mark size=4, mark options={solid}]
table {%
10 1241.60421490669
20 1241.60421490669
40 1241.60421490669
80 1241.60421490669
160 1241.60421490669
};
\addplot [steelblue31119180, mark=triangle*, mark size=4, mark options={solid,rotate=180}]
table {%
10 9.29854021072388
20 9.74632134437561
40 10.3022086620331
80 12.2409708499908
160 15.5554538726807
};
\addplot [darkorange25512714, mark=triangle*, mark size=4, mark options={solid}]
table {%
10 10.1709331989288
20 10.3389545917511
40 11.3073535919189
80 12.7460961818695
160 16.1618017673492
};
\addplot [forestgreen4416044, mark=triangle*, mark size=4, mark options={solid,rotate=270}]
table {%
10 98.9421360015869
20 193.478971242905
40 425.863892221451
80 1105.33603916168
160 4088.02628817558
};

\nextgroupplot[
log basis x={10},
log basis y={10},
tick align=outside,
tick pos=left,
title={\scriptsize Basis generation time, \(\displaystyle q_{pow}\) = 5},
x grid style={darkgray176},
xlabel={basis size},
xmin=8.70550563296124, xmax=183.791736799526,
xmode=log,
xtick style={color=black},
xtick={0.1,1,10,100,1000,10000},
xticklabels={
  \(\displaystyle {10^{-1}}\),
  \(\displaystyle {10^{0}}\),
  \(\displaystyle {10^{1}}\),
  \(\displaystyle {10^{2}}\),
  \(\displaystyle {10^{3}}\),
  \(\displaystyle {10^{4}}\)
},
y grid style={darkgray176},
ylabel={basis generation time},
ymin=2.24370986620585, ymax=24732.726274538,
ymode=log,
ytick style={color=black},
ytick={0.1,1,10,100,1000,10000,100000,1000000},
yticklabels={
  \(\displaystyle {10^{-1}}\),
  \(\displaystyle {10^{0}}\),
  \(\displaystyle {10^{1}}\),
  \(\displaystyle {10^{2}}\),
  \(\displaystyle {10^{3}}\),
  \(\displaystyle {10^{4}}\),
  \(\displaystyle {10^{5}}\),
  \(\displaystyle {10^{6}}\)
}
]
\addplot [red, mark=square*, mark size=4, mark options={solid}]
table {%
10 1241.60421490669
20 1241.60421490669
40 1241.60421490669
80 1241.60421490669
160 1241.60421490669
};
\addplot [steelblue31119180, mark=triangle*, mark size=4, mark options={solid,rotate=180}]
table {%
10 13.1302813053131
20 14.3744291305542
40 15.1281248569489
80 19.4232367992401
160 26.4891467094421
};
\addplot [darkorange25512714, mark=triangle*, mark size=4, mark options={solid}]
table {%
10 14.8316363811493
20 15.3385501861572
40 17.5917099475861
80 20.602678155899
160 27.5311779022217
};
\addplot [forestgreen4416044, mark=triangle*, mark size=4, mark options={solid,rotate=270}]
table {%
10 205.853742694855
20 398.543378925324
40 875.758471202851
80 2256.05507693291
160 8244.24209151268
};

\nextgroupplot[
hide x axis,
hide y axis,
legend cell align={left},
legend style={fill opacity=0.8, draw opacity=1, text opacity=1, draw=lightgray204},
tick align=outside,
tick pos=left,
x grid style={darkgray176},
xmin=0, xmax=1,
xtick style={color=black},
y grid style={darkgray176},
ymin=0, ymax=1,
ytick style={color=black}
]

\end{groupplot}
\node[
   right = 9.3cm,
   below = 6.0cm,
    align=left,
    draw=black,
    anchor=south,
  ] {%
  \ref*{leg0}
  };
\end{tikzpicture}
\caption{Runtimes for different values for $\povs$ and $
\qpow$}
\label{runtime}
\end{figure}
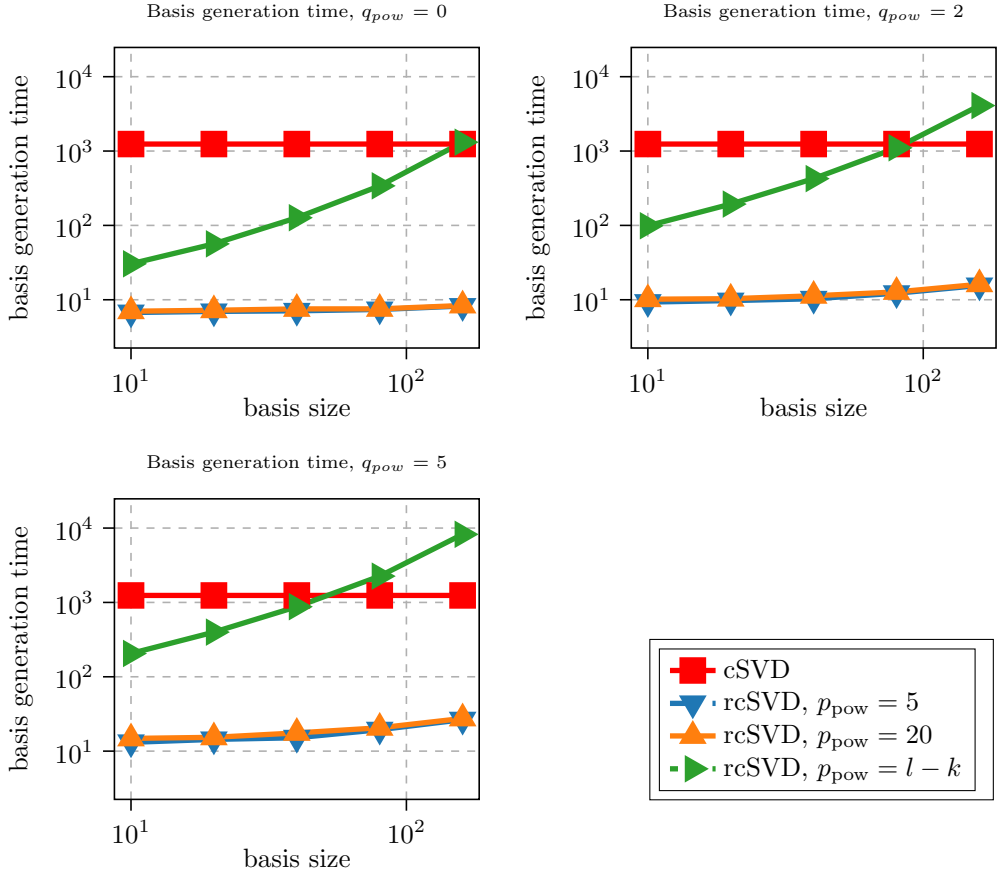
We observe that the rcSVD is highly efficient if $\povs$ is chosen as a small value that is independent from $k$. Choosing $\povs$ as suggested by \Cref{thmbound,thmadvbound} to obtain theoretical guarantees also yields an advantage regarding the use of computational resources but only for small basis sizes and small values of $\qpow$ as this $k$-dependent choice of $\povs$ drastically increases the runtime especially for larger values of $k$ compared to the runtimes for $\povs = 5,20$. In practice these small values for $\povs$ also result in very good approximations as we saw in \Cref{fig:errors} compared to cSVD while requiring less than $5\%$ of the computational costs.

For the next experiment we compute the effectivities

\begin{align*}
\eff_\mathrm{det} = \frac{e_\mathrm{proj}(\fV)}{\eta_\mathrm{det}(\Xs, \fOmega, k)}, \qquad
\eff_\mathrm{det}^{\ \mathrm{adv}} = \frac{e_\mathrm{proj}(\fV)}{\eta_\mathrm{det}^{\mathrm{adv}}(\Xs, \fOmega, k, \qpow)}
\end{align*}
 for the deterministic error bounds 
\begin{align*}\eta_\mathrm{det}(\Xs, \fOmega, k) &= \left(1 + \sqrt{1 + 
\vert\vert\fOmega_2\vert\vert_2^2\vert\vert\fOmega_1^\dagger\vert\vert_2^2} \right)\sqrt{\sum\limits_{j \geq k+1} \sigma_j^2} \\
\eta_\mathrm{det}^{\mathrm{adv}}(\Xs, \fOmega, k, \qpow) &= 
\sqrt{\left(\sum\limits_{j \geq k+1} \sigma_j^2 \right)+ \alpha^2\vert\vert\fOmega_2\vert\vert_2^2\vert\vert\fOmega_1^\dagger\vert\vert_2^2}\end{align*}
where with the superscript $adv$, we denote the advanced error bound that also takes the number of power iterations into account. The effectivity measures the extent of overestimation of the error bounds. Ideally, the effectivity is close to or even equal to one. An effectivity larger than one corresponds to overestimation and an effectivity lower than one means, that the error is underestimated, i.e., it is not bounded.  
Note, that these error bounds are expensive to evaluate since the singular values and singular vectors of the snapshot matrix are required. For efficient error estimation for example in combination with adaptive basis generation, error estimation techniques like in \cite{Rettberg2023} or \cite{Smetana19} have to be applied. 
We present average results over 5 runs (i.e., 5 draws of $\fOmega$) in \Cref{detbound}.

\begin{figure}[h]
\begin{tikzpicture}

\definecolor{darkgray176}{RGB}{176,176,176}
\definecolor{darkorange25512714}{RGB}{255,127,14}
\definecolor{forestgreen4416044}{RGB}{44,160,44}
\definecolor{lightgray204}{RGB}{204,204,204}
\definecolor{steelblue31119180}{RGB}{31,119,180}

\begin{groupplot}[group style={group size=2 by 2}, scale = 0.7]
\nextgroupplot[
legend to name=leg1,
legend cell align={left},
log basis x={10},
log basis y={10},
tick align=outside,
tick pos=left,
title={\scriptsize Effectivity det. error bounds, \(\displaystyle q_{pow}\) = 0},
x grid style={darkgray176},
xlabel={basis size},
xmajorgrids,
xmin=8.70550563296124, xmax=183.791736799526,
xmode=log,
xtick style={color=black},
xtick={0.1,1,10,100,1000,10000},
xticklabels={
  \(\displaystyle {10^{-1}}\),
  \(\displaystyle {10^{0}}\),
  \(\displaystyle {10^{1}}\),
  \(\displaystyle {10^{2}}\),
  \(\displaystyle {10^{3}}\),
  \(\displaystyle {10^{4}}\)
},
y grid style={darkgray176},
ylabel={effectivity},
ymajorgrids,
ymin=0.5, ymax=1335.82822746208,
ymode=log,
ytick style={color=black},
ytick={0.01,0.1,1,10,100,1000,10000,100000},
yticklabels={
  \(\displaystyle {10^{-2}}\),
  \(\displaystyle {10^{-1}}\),
  \(\displaystyle {10^{0}}\),
  \(\displaystyle {10^{1}}\),
  \(\displaystyle {10^{2}}\),
  \(\displaystyle {10^{3}}\),
  \(\displaystyle {10^{4}}\),
  \(\displaystyle {10^{5}}\)
}
]
\addlegendentry{$\eff_{\textrm{det}}^{\ \mathrm{adv}}$, $p_\text{pow} = 5$}
\addlegendentry{$\eff_{\textrm{det}}^{\ \mathrm{adv}}$, $p_\text{pow} = 20$}
\addlegendentry{$\eff_{\textrm{det}}^{\ \mathrm{adv}}$, $p_\text{pow} = l-k$}
\addlegendentry{$\eff_{\textrm{det}}$, $p_\text{pow} = 5$}
\addlegendentry{$\eff_{\textrm{det}}$, $p_\text{pow} = 20$}
\addlegendentry{$\eff_{\textrm{det}}$, $p_\text{pow} = l-k$}
\addplot [steelblue31119180, dash pattern=on 3pt off 4.95pt]
table {%
10 44.3169646893678
20 102.665766427157
40 168.25527853012
80 274.086608484773
160 109.928795218276
};
\addplot [darkorange25512714, dash pattern=on 3pt off 4.95pt]
table {%
10 2.96143652326528
20 13.6676745634098
40 37.1615969121125
80 89.1589559809971
160 41.9253006954611
};
\addplot [forestgreen4416044, dash pattern=on 3pt off 4.95pt]
table {%
10 0.99999999999989
20 1.0000000000002
40 1.00000000000003
80 0.999999999999919
160 1.00775972965664
};
\addplot [steelblue31119180]
table {%
10 93.6753462207432
20 100.774679278869
40 117.553300790981
80 141.000967318977
160 134.774472406722
};
\addplot [darkorange25512714]
table {%
10 48.123682184905
20 60.3727701742268
40 63.269707826656
80 76.411350528351
160 55.5709998812047
};
\addplot [forestgreen4416044]
table {%
10 4.61908446115475
20 3.92716721630678
40 3.39985195407518
80 2.98051151658389
160 2.66331252046772
};

\nextgroupplot[
log basis x={10},
log basis y={10},
tick align=outside,
tick pos=left,
title={\scriptsize Effectivity det. error bounds, \(\displaystyle q_{pow}\) = 2},
x grid style={darkgray176},
xlabel={basis size},
xmajorgrids,
xmin=8.70550563296124, xmax=183.791736799526,
xmode=log,
xtick style={color=black},
xtick={0.1,1,10,100,1000,10000},
xticklabels={
  \(\displaystyle {10^{-1}}\),
  \(\displaystyle {10^{0}}\),
  \(\displaystyle {10^{1}}\),
  \(\displaystyle {10^{2}}\),
  \(\displaystyle {10^{3}}\),
  \(\displaystyle {10^{4}}\)
},
y grid style={darkgray176},
ylabel={effectivity},
ymajorgrids,
ymin=0.5, ymax=1335.82822746208,
ymode=log,
ytick style={color=black},
ytick={0.01,0.1,1,10,100,1000,10000,100000},
yticklabels={
  \(\displaystyle {10^{-2}}\),
  \(\displaystyle {10^{-1}}\),
  \(\displaystyle {10^{0}}\),
  \(\displaystyle {10^{1}}\),
  \(\displaystyle {10^{2}}\),
  \(\displaystyle {10^{3}}\),
  \(\displaystyle {10^{4}}\),
  \(\displaystyle {10^{5}}\)
}
]
\addplot [steelblue31119180, dash pattern=on 3pt off 4.95pt]
table {%
10 1.00620564623951
20 8.562926239298
40 73.9700425410354
80 261.852517848587
160 236.035734659589
};
\addplot [darkorange25512714, dash pattern=on 3pt off 4.95pt]
table {%
10 1.000000000001
20 1.00000170644684
40 1.04347741958662
80 7.65464674985026
160 49.0024303177295
};
\addplot [forestgreen4416044, dash pattern=on 3pt off 4.95pt]
table {%
10 0.999999999999889
20 1.00000000000021
40 1.00000000000004
80 0.999999999999931
160 0.979716935000981
};
\addplot [steelblue31119180]
table {%
10 109.741448914051
20 162.528780046096
40 235.274627950772
80 311.023195916147
160 444.881990262736
};
\addplot [darkorange25512714]
table {%
10 48.2717468053711
20 63.2792399015535
40 78.6082903284461
80 113.059357764886
160 133.582822746208
};
\addplot [forestgreen4416044]
table {%
10 4.61908446115475
20 3.9271672163068
40 3.39985195407521
80 2.98051151658392
160 2.58920088064186
};

\nextgroupplot[
log basis x={10},
log basis y={10},
tick align=outside,
tick pos=left,
title={\scriptsize Effectivity det. error bounds, \(\displaystyle q_{pow}\) = 5},
x grid style={darkgray176},
xlabel={basis size},
xmajorgrids,
xmin=8.70550563296124, xmax=183.791736799526,
xmode=log,
xtick style={color=black},
xtick={0.1,1,10,100,1000,10000},
xticklabels={
  \(\displaystyle {10^{-1}}\),
  \(\displaystyle {10^{0}}\),
  \(\displaystyle {10^{1}}\),
  \(\displaystyle {10^{2}}\),
  \(\displaystyle {10^{3}}\),
  \(\displaystyle {10^{4}}\)
},
y grid style={darkgray176},
ylabel={effectivity},
ymajorgrids,
ymin=0.5, ymax=1335.82822746208,
ymode=log,
ytick style={color=black},
ytick={0.01,0.1,1,10,100,1000,10000,100000},
yticklabels={
  \(\displaystyle {10^{-2}}\),
  \(\displaystyle {10^{-1}}\),
  \(\displaystyle {10^{0}}\),
  \(\displaystyle {10^{1}}\),
  \(\displaystyle {10^{2}}\),
  \(\displaystyle {10^{3}}\),
  \(\displaystyle {10^{4}}\),
  \(\displaystyle {10^{5}}\)
}
]
\addplot [steelblue31119180, dash pattern=on 3pt off 4.95pt]
table {%
10 1.00000000006012
20 1.00512031394843
40 7.69673344284678
80 75.2054473875441
160 125.430762323125
};
\addplot [darkorange25512714, dash pattern=on 3pt off 4.95pt]
table {%
10 0.999999999999888
20 1.0000000000002
40 1.00000001187536
80 1.00551216189297
160 16.8696210029024
};
\addplot [forestgreen4416044, dash pattern=on 3pt off 4.95pt]
table {%
10 0.99999999999989
20 1.0000000000002
40 1.00000000000003
80 0.999999999999953
160 0.970995251432207
};
\addplot [steelblue31119180]
table {%
10 109.741455483114
20 162.555669928314
40 235.751744660294
80 313.368448046304
160 450.63009865211
};
\addplot [darkorange25512714]
table {%
10 48.2717468053652
20 63.279239903891
40 78.6083583967317
80 113.067705340753
160 135.454790496544
};
\addplot [forestgreen4416044]
table {%
10 4.61908446115475
20 3.92716721630679
40 3.39985195407518
80 2.98051151658399
160 2.56615117110823
};

\nextgroupplot[
hide x axis,
hide y axis,
legend cell align={left},
legend style={
  fill opacity=0.8,
  draw opacity=1,
  text opacity=1,
  at={(0.97,0.03)},
  anchor=south east,
  draw=lightgray204
},
tick align=outside,
tick pos=left,
x grid style={darkgray176},
xmin=0, xmax=1,
xtick style={color=black},
y grid style={darkgray176},
ymin=0, ymax=1,
ytick style={color=black}
]
\end{groupplot}
\node[
   right = 9.4cm,
   below = 6.0cm,
    align=left,
    draw=black,
    anchor=south,
  ] {%
  \ref*{leg1}
  };
\end{tikzpicture}
\caption{Effectivity of deterministic error bound for different values of $\povs$ and $\qpow$}
\label{detbound}
\end{figure}
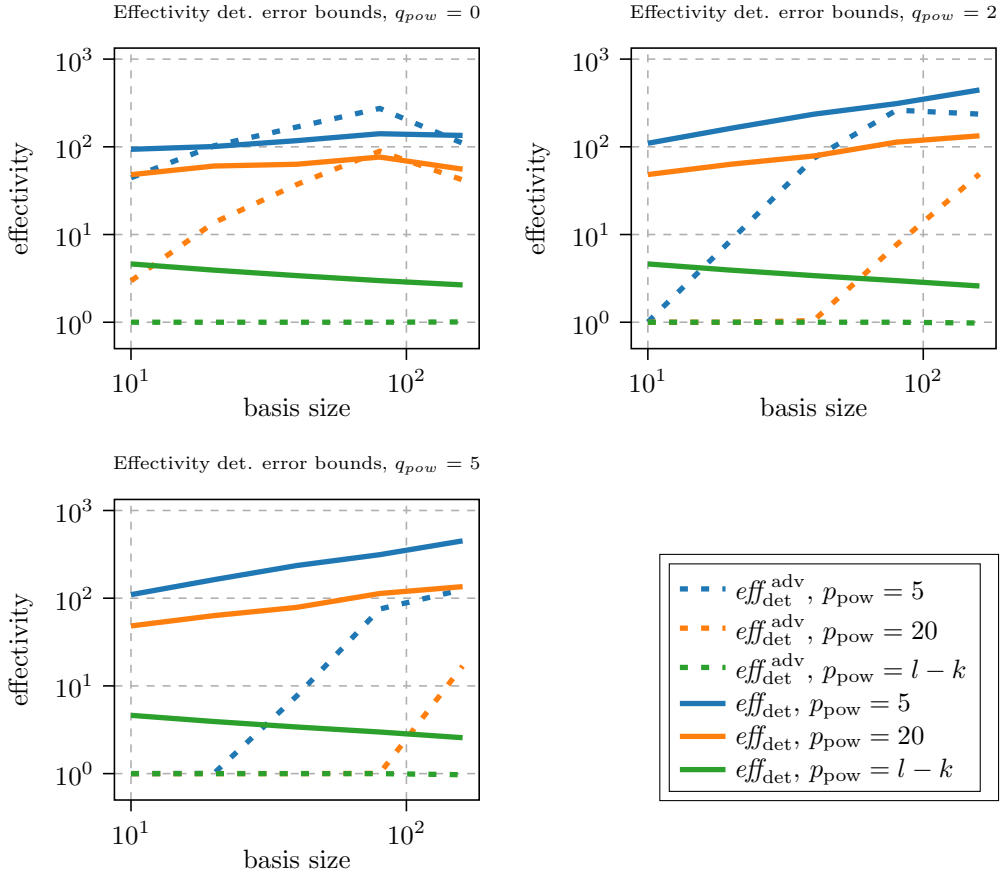
We observe that $\eff_\mathrm{det}^{\ \mathrm{adv}}$ is 
very close to one 
for small values of $k$ with an increasing factor of overestimation the higher $k$ is chosen. Further, $\eta_\mathrm{det}^{\mathrm{adv}}$ gets sharper the higher $\qpow$ and $\povs$ are chosen. Also $\eff_\mathrm{det}$ becomes closer to one the higher $\povs$ is chosen. However the value of $\qpow$ does not influence $\eff_\mathrm{det}$. For $\qpow = 0$ and $\povs = 5$ both bounds roughly have the same extent of overestimation i.e., the blue curves are close together. For increasing values of $\qpow$ and $\povs$ the advances bound gets sharper, i.e., the dotted lines are closer to one than the solid lines. 

For the next experiment we compute the effectivities

\begin{align*}
\eff_{\textrm{prob}} = \frac{e_\mathrm{proj}(\fV)}{\eta_{\textrm{prob}}(\Xs,k, \povs)}, \qquad
\eff_{\textrm{prob}}^{\ \mathrm{adv}} = \frac{e_\mathrm{proj}(\fV)}{\eta_{\textrm{prob}}^{\mathrm{adv}}(\Xs, k, \povs, \qpow)}
\end{align*} for the probabilistic error bounds 

\begin{align*}
\eta_{\textrm{prob}}(\Xs, k, \povs) &:= \left(1 + \sqrt{1 + 
6\ns/(k+\povs)} \right)\sqrt{\sum\limits_{j \geq k+1} \sigma_j^2} \\
\eta_{\textrm{prob}}^{\mathrm{adv}}(\Xs,k, \povs, \qpow) &:= 
\sqrt{\left(\sum\limits_{j \geq k+1} \sigma_j^2 \right)+ \alpha^2 6\ns/(k+\povs)}.
\end{align*} 

\begin{figure}[h]
\begin{tikzpicture}

\definecolor{darkgray176}{RGB}{176,176,176}
\definecolor{darkorange25512714}{RGB}{255,127,14}
\definecolor{forestgreen4416044}{RGB}{44,160,44}
\definecolor{lightgray204}{RGB}{204,204,204}
\definecolor{steelblue31119180}{RGB}{31,119,180}

\begin{groupplot}[group style={group size=2 by 2}, scale = 0.7]
\nextgroupplot[
legend to name=leg2,
legend cell align={left},
log basis x={10},
log basis y={10},
tick align=outside,
tick pos=left,
title={\scriptsize Effectivity prob. error bound, \(\displaystyle q_{pow}\) = 0},
x grid style={darkgray176},
xlabel={basis size},
xmajorgrids,
xmin=8.70550563296124, xmax=183.791736799526,
xmode=log,
xtick style={color=black},
xtick={0.1,1,10,100,1000,10000},
xticklabels={
  \(\displaystyle {10^{-1}}\),
  \(\displaystyle {10^{0}}\),
  \(\displaystyle {10^{1}}\),
  \(\displaystyle {10^{2}}\),
  \(\displaystyle {10^{3}}\),
  \(\displaystyle {10^{4}}\)
},
y grid style={darkgray176},
ylabel={effectivity},
ymajorgrids,
ymin=0.5, ymax=246.739615163368,
ymode=log,
ytick style={color=black},
ytick={0.01,0.1,1,10,100,1000,10000},
yticklabels={
  \(\displaystyle {10^{-2}}\),
  \(\displaystyle {10^{-1}}\),
  \(\displaystyle {10^{0}}\),
  \(\displaystyle {10^{1}}\),
  \(\displaystyle {10^{2}}\),
  \(\displaystyle {10^{3}}\),
  \(\displaystyle {10^{4}}\)
}
]
\addlegendentry{$\eff_{\textrm{prob}}^{\ \mathrm{adv}}$, $p_\text{pow} = 5$}
\addlegendentry{$\eff_{\textrm{prob}}^{\ \mathrm{adv}}$, $p_\text{pow} = 20$}
\addlegendentry{$\eff_{\textrm{prob}}^{\ \mathrm{adv}}$, $p_\text{pow} = l-k$}
\addlegendentry{$\eff_{\textrm{prob}}$, $p_\text{pow} = 5$}
\addlegendentry{$\eff_{\textrm{prob}}$, $p_\text{pow} = 20$}
\addlegendentry{$\eff_{\textrm{prob}}$, $p_\text{pow} = l-k$}
\addplot [steelblue31119180, dash pattern=on 3pt off 4.95pt]
table {%
10 38.7947526326341
20 64.5140556415834
40 67.428130314958
80 66.5598568171257
160 20.204099534075
};
\addplot [darkorange25512714, dash pattern=on 3pt off 4.95pt]
table {%
10 3.54333142655757
20 11.4606491319383
40 24.1827771622308
80 37.0549179565341
160 17.9917184221502
};
\addplot [forestgreen4416044, dash pattern=on 3pt off 4.95pt]
table {%
10 0.99999999999989
20 1.0000000000002
40 1.00000000000003
80 0.999999999999919
160 1.00775972965664
};
\addplot [steelblue31119180]
table {%
10 82.2465383877893
20 63.9364759102511
40 47.9148164229609
80 35.1424352141547
160 25.7132931848017
};
\addplot [darkorange25512714]
table {%
10 58.4543296888865
20 50.7594212185091
40 41.6324993078212
80 32.4801524773918
160 24.6632960724964
};
\addplot [forestgreen4416044]
table {%
10 9.0593727033856
20 7.32511598301566
40 6.01226565712009
80 4.96997453120763
160 4.16443722931536
};

\nextgroupplot[
log basis x={10},
log basis y={10},
tick align=outside,
tick pos=left,
title={\scriptsize Effectivity prob. error bound, \(\displaystyle q_{pow}\) = 2},
x grid style={darkgray176},
xlabel={basis size},
xmajorgrids,
xmin=8.70550563296124, xmax=183.791736799526,
xmode=log,
xtick style={color=black},
xtick={0.1,1,10,100,1000,10000},
xticklabels={
  \(\displaystyle {10^{-1}}\),
  \(\displaystyle {10^{0}}\),
  \(\displaystyle {10^{1}}\),
  \(\displaystyle {10^{2}}\),
  \(\displaystyle {10^{3}}\),
  \(\displaystyle {10^{4}}\)
},
y grid style={darkgray176},
ylabel={effectivity},
ymajorgrids,
ymin=0.5, ymax=246.739615163368,
ymode=log,
ytick style={color=black},
ytick={0.01,0.1,1,10,100,1000,10000},
yticklabels={
  \(\displaystyle {10^{-2}}\),
  \(\displaystyle {10^{-1}}\),
  \(\displaystyle {10^{0}}\),
  \(\displaystyle {10^{1}}\),
  \(\displaystyle {10^{2}}\),
  \(\displaystyle {10^{3}}\),
  \(\displaystyle {10^{4}}\)
}
]
\addplot [steelblue31119180, dash pattern=on 3pt off 4.95pt]
table {%
10 1.00340245317204
20 3.45955779513235
40 14.8418853990524
80 28.8416549926749
160 12.7980910192089
};
\addplot [darkorange25512714, dash pattern=on 3pt off 4.95pt]
table {%
10 1.00000000000135
20 1.0000010841103
40 1.01203740616617
80 2.35378557224861
160 8.54628161928923
};
\addplot [forestgreen4416044, dash pattern=on 3pt off 4.95pt]
table {%
10 0.999999999999889
20 1.00000000000021
40 1.00000000000004
80 0.999999999999931
160 0.979716935000981
};
\addplot [steelblue31119180]
table {%
10 82.2465383877893
20 63.9364759102514
40 47.9148164229613
80 35.1424352141551
160 24.9977728286273
};
\addplot [darkorange25512714]
table {%
10 58.4543296888864
20 50.7594212185093
40 41.6324993078216
80 32.4801524773921
160 23.9769938449521
};
\addplot [forestgreen4416044]
table {%
10 9.0593727033856
20 7.32511598301569
40 6.01226565712014
80 4.96997453120769
160 4.04855399381649
};

\nextgroupplot[
log basis x={10},
log basis y={10},
tick align=outside,
tick pos=left,
title={\scriptsize Effectivity prob. error bound, \(\displaystyle q_{pow}\) = 5},
x grid style={darkgray176},
xlabel={basis size},
xmajorgrids,
xmin=8.70550563296124, xmax=183.791736799526,
xmode=log,
xtick style={color=black},
xtick={0.1,1,10,100,1000,10000},
xticklabels={
  \(\displaystyle {10^{-1}}\),
  \(\displaystyle {10^{0}}\),
  \(\displaystyle {10^{1}}\),
  \(\displaystyle {10^{2}}\),
  \(\displaystyle {10^{3}}\),
  \(\displaystyle {10^{4}}\)
},
y grid style={darkgray176},
ylabel={effectivity},
ymajorgrids,
ymin=0.5, ymax=246.739615163368,
ymode=log,
ytick style={color=black},
ytick={0.01,0.1,1,10,100,1000,10000},
yticklabels={
  \(\displaystyle {10^{-2}}\),
  \(\displaystyle {10^{-1}}\),
  \(\displaystyle {10^{0}}\),
  \(\displaystyle {10^{1}}\),
  \(\displaystyle {10^{2}}\),
  \(\displaystyle {10^{3}}\),
  \(\displaystyle {10^{4}}\)
}
]
\addplot [steelblue31119180, dash pattern=on 3pt off 4.95pt]
table {%
10 1.00000000003277
20 1.00074194975963
40 1.82302131060723
80 8.27648238039454
160 6.70544054073038
};
\addplot [darkorange25512714, dash pattern=on 3pt off 4.95pt]
table {%
10 0.99999999999989
20 1.0000000000002
40 1.00000000323588
80 1.00043204220991
160 3.01293419430733
};
\addplot [forestgreen4416044, dash pattern=on 3pt off 4.95pt]
table {%
10 0.99999999999989
20 1.0000000000002
40 1.00000000000003
80 0.999999999999953
160 0.970995251432207
};
\addplot [steelblue31119180]
table {%
10 82.2465383877894
20 63.9364759102513
40 47.9148164229609
80 35.1424352141558
160 24.7752364441407
};
\addplot [darkorange25512714]
table {%
10 58.4543296888865
20 50.7594212185092
40 41.6324993078212
80 32.4801524773929
160 23.7635446885936
};
\addplot [forestgreen4416044]
table {%
10 9.05937270338561
20 7.32511598301568
40 6.01226565712009
80 4.9699745312078
160 4.01251276028904
};

\nextgroupplot[
hide x axis,
hide y axis,
legend cell align={left},
legend style={fill opacity=0.8, draw opacity=1, text opacity=1, draw=lightgray204},
tick align=outside,
tick pos=left,
x grid style={darkgray176},
xmin=0, xmax=1,
xtick style={color=black},
y grid style={darkgray176},
ymin=0, ymax=1,
ytick style={color=black}
]
\end{groupplot}
\node[
   right = 9.4cm,
   below = 6.0cm,
    align=left,
    draw=black,
    anchor=south,
  ] {%
  \ref*{leg2}
  };
\end{tikzpicture}
\caption{Effectivity of probabilistic error bound for different values of $\povs$ and $\qpow$}
\label{probbound}
\end{figure}
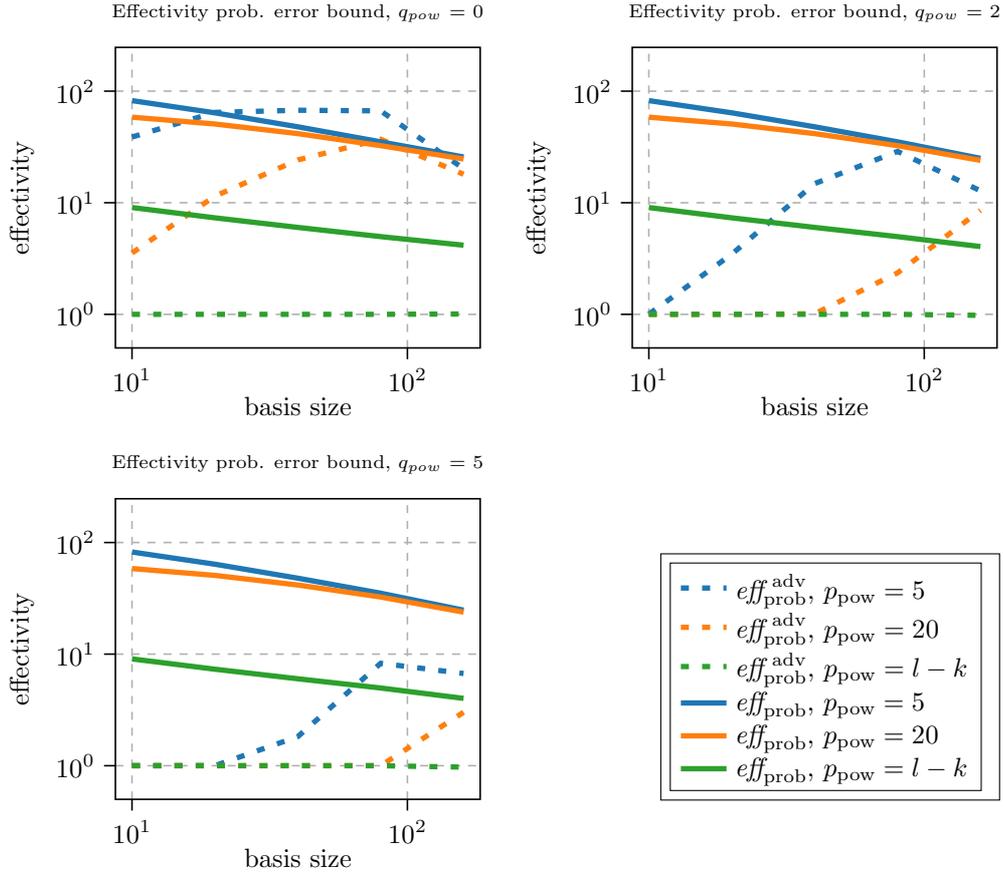
By \Cref{thmbound,thmadvbound}, the failure probability of the two bounds is $2/k$ for $\povs=l-k$, which is for the values of $k$ considered here between $20\%$ for $k = 10$ and $1.25\%$ for $k = 160.$ However, we observe in \Cref{probbound} where we present average results over 5 runs (i.e., 5 draws of $\fOmega$) that in practice the effectivities are not lower than one. Note, that we compute the effectivities also for $\povs = 5, 20$ where the assumption $\povs \geq l-k$ does not hold. Nevertheless, we observe that also in this case the effectivities are greater or equal than one. Moreover, we realize that the assumption $\povs \geq k-l$ is needed in \Cref{propSRFT} as we observe that the effectivities of the probabilistic bounds are sometimes lower than the effectivities of the deterministic bounds for $\povs = 5, 20.$ We further observe that $\eff_{\textrm{prob}}^{\ \mathrm{adv}}$ gets closer to one the higher $\qpow$ and $\povs$ are chosen and similarly $\eff_{\textrm{prob}}$ becomes close to one the higher $\povs$ is chosen.

\section{Conclusion and Outlook} \label{Conc}
In this work, we presented two probabilistic error bounds for the rcSVD basis generation procedure that depend on the choice of two hyperparameters. With a certain probability which depends on the basis size a suitable choice leads to the projection error of the rcSVD being at most a constant factor worse than the projection error of the cSVD, i.e., the rcSVD being quasi-optimal in the set of ortho-symplectic matrices. However, the numerical experiments showed that the resulting choice for the oversampling parameter $\povs$ required for having these guarantees is only useful if $k+\povs\ll \ns$. In practice, smaller values for $\povs$ also work very well where we do not have probabilistic bounds. Moreover, we learn from \Cref{thmadvbound} that the performance of the rcSVD algorithm depends on the quotient $(\sigma_k/\sigma_{k+\povs+1})^{\qpow}$. One option for future work is applying (randomized) error estimates  for the projection error and combining them with adaptive randomized basis generation. Future work will also deal with error analysis of the rSVD-like-algorithm \cite{Herkert2023rand}, a randomized version of the SVD-like decomposition \cite{Xu2003,Buchfink2019}. Another option for future work is the analysis of different complex sketching matrices, i.e., bounding the norms of $\fOmega_1, \fOmega_2$ for other random distributions. Furthermore, our implementation could be adapted and tested on different hardware (e.g. multicore architectures), as random sketching techniques are easily parallelizable and therefore well suited to modern computing architectures.

\section*{Declaration of competing interest}
The authors declare no competing interests.
\section*{Data availability}
The code for the experiments is openly available at \url{doi.org/10.18419/darus-4185}.
\section*{Acknowledgements}
Supported by Deutsche Forschungsgemeinschaft (DFG,
German Research Foundation) Project No. 314733389, and under Germany’s
Excellence Strategy - EXC 2075 – 390740016. We acknowledge the support by
the Stuttgart Center for Simulation Science (SimTech).
\bibliographystyle{elsarticle-num.bst}

\begin{thebibliography}{10}
\expandafter\ifx\csname url\endcsname\relax
  \def\url#1{\texttt{#1}}\fi
\expandafter\ifx\csname urlprefix\endcsname\relax\def\urlprefix{URL }\fi
\expandafter\ifx\csname href\endcsname\relax
  \def\href#1#2{#2} \def\path#1{#1}\fi

\bibitem{Meyer2017}
K.~R. Meyer, D.~C. Offin, Introduction to Hamiltonian Dynamical Systems and the
  N-Body Problem, Vol.~90 of Applied Mathematical Sciences, Springer
  International Publishing AG, New York, NY, 2017.
\newblock \href {https://doi.org/10.1137/1035155} {\path{doi:10.1137/1035155}}.

\bibitem{Volkwein2013}
S.~Volkwein, Proper orthogonal decomposition: Theory and reduced-order
  modelling, Lecture Notes, University of Konstanz (2013). \urlprefix\url{https://www.math.uni-konstanz.de/numerik/personen/volkwein/teaching/POD-Book.pdf}

\bibitem{Buchfink2019}
P.~Buchfink, A.~Bhatt, B.~Haasdonk, Symplectic model order reduction with
  non-orthonormal bases, Mathematical and Computational Applications 24~(2)
  (2019).
\newblock \href {https://doi.org/10.3390/mca24020043}
  {\path{doi:10.3390/mca24020043}}.

\bibitem{MaboudiAfkham2017}
B.~{Maboudi Afkham}, J.~S. Hesthaven, Structure preserving model reduction of
  parametric {H}amiltonian systems, SIAM Journal on Scientific Computing 39~(6)
  (2017) A2616--A2644.
\newblock \href {https://doi.org/10.1137/17M1111991}
  {\path{doi:10.1137/17M1111991}}.

\bibitem{Peng2016}
L.~Peng, K.~Mohseni, Symplectic model reduction of {H}amiltonian systems, SIAM
  Journal on Scientific Computing 38~(1) (2016) A1--A27.
\newblock \href {https://doi.org/10.1137/140978922}
  {\path{doi:10.1137/140978922}}.

\bibitem{Halko2011}
N.~Halko, P.-G. Martinsson, J.~A. Tropp, Finding structure with randomness:
  Probabilistic algorithms for constructing approximate matrix decompositions,
  SIAM Review 53~(2) (2011) 217--288.
\newblock \href {https://doi.org/10.1137/090771806}
  {\path{doi:10.1137/090771806}}.

\bibitem{Mahoney2011}
M.~W. Mahoney, Randomized algorithms for matrices and data, Foundations and
  Trends{\textregistered} in Machine Learning 3~(2) (2011) 123--224.
\newblock \href {https://doi.org/10.1561/2200000035}
  {\path{doi:10.1561/2200000035}}.

\bibitem{Woodruff2014}
D.~P. Woodruff, Sketching as a tool for numerical linear algebra, Foundations
  and Trends{\textregistered} in Theoretical Computer Science 10~(1--2) (2014)
  1--157.
\newblock \href {https://doi.org/10.1561/0400000060}
  {\path{doi:10.1561/0400000060}}.

\bibitem{Murray2023}
R.~Murray, J.~Demmel, M.~W. Mahoney, N.~B. Erichson, M.~Melnichenko, O.~A.
  Malik, L.~Grigori, P.~Luszczek, M.~Derezi{\'n}ski, M.~E. Lopes, T.~Liang,
  H.~Luo, J.~Dongarra, \href{https://arxiv.org/abs/2302.11474}{Randomized
  numerical linear algebra: A perspective on the field with an eye to
  software}, arXiv preprint (2023).
\urlprefix\url{https://arxiv.org/abs/2302.11474}

\bibitem{Boutsidis2017}
C.~Boutsidis, P.~Drineas, P.~Kambadur, E.-M. Kontopoulou, A.~Zouzias, A
  randomized algorithm for approximating the log determinant of a symmetric
  positive definite matrix, Linear Algebra and its Applications 533 (2017)
  95--117.
\newblock \href {https://doi.org/10.1016/j.laa.2017.07.004}
  {\path{doi:10.1016/j.laa.2017.07.004}}.

\bibitem{Balabanov2022GS}
O.~Balabanov, L.~Grigori, Randomized {G}ram--{S}chmidt process with application
  to {GMRES}, SIAM Journal on Scientific Computing 44~(3) (2022) A1450--A1474.
\newblock \href {https://doi.org/10.1137/20M138870X}
  {\path{doi:10.1137/20M138870X}}.

\bibitem{Meier2024}
M.~Meier, Y.~Nakatsukasa, Fast randomized numerical rank estimation for
  numerically low-rank matrices, Linear Algebra and its Applications 686 (2024)
  1--32.
\newblock \href {https://doi.org/10.1016/j.laa.2024.01.001}
  {\path{doi:10.1016/j.laa.2024.01.001}}.

\bibitem{Li2020}
H.~Li, S.~Yin, Single-pass randomized algorithms for {LU} decomposition, Linear
  Algebra and its Applications 595 (2020) 101--122.
\newblock \href {https://doi.org/10.1016/j.laa.2020.03.001}
  {\path{doi:10.1016/j.laa.2020.03.001}}.

\bibitem{Demmel2023}
J.~Demmel, L.~Grigori, A.~Rusciano, An improved analysis and unified
  perspective on deterministic and randomized low-rank matrix approximation,
  SIAM Journal on Matrix Analysis and Applications 44~(2) (2023) 559--591.
\newblock \href {https://doi.org/10.1137/21M1391316}
  {\path{doi:10.1137/21M1391316}}.

\bibitem{Alla2019}
A.~Alla, J.~N. Kutz, Randomized model order reduction, Advances in
  Computational Mathematics 45 (2019) 1251--1271.
\newblock \href {https://doi.org/10.1007/s10444-018-09655-9}
  {\path{doi:10.1007/s10444-018-09655-9}}.

\bibitem{Bach2019}
C.~Bach, D.~Ceglia, L.~Song, F.~Duddeck, Randomized low-rank approximation
  methods for projection-based model order reduction of large nonlinear
  dynamical problems, International Journal for Numerical Methods in
  Engineering 118~(4) (2019) 209--241.
\newblock \href {https://doi.org/10.1002/nme.6009}
  {\path{doi:10.1002/nme.6009}}.

\bibitem{Hochman2014}
A.~Hochman, J.~F. Villena, A.~G. Polimeridis, L.~M. Silveira, J.~K. White,
  L.~Daniel, Reduced-order models for electromagnetic scattering problems, IEEE
  transactions on antennas and propagation 62~(6) (2014) 3150--3162.
\newblock \href {https://doi.org/10.1109/TAP.2014.2314734}
  {\path{doi:10.1109/TAP.2014.2314734}}.

\bibitem{Buhr2018}
A.~Buhr, K.~Smetana, Randomized local model order reduction, SIAM Journal on
  Scientific Computing 40~(4) (2018) A2120--A2151.
\newblock \href {https://doi.org/10.1137/17M1138480}
  {\path{doi:10.1137/17M1138480}}.

\bibitem{Zahm2016}
O.~Zahm, A.~Nouy, Interpolation of inverse operators for preconditioning
  parameter-dependent equations, SIAM Journal on Scientific Computing 38~(2)
  (2016) A1044--A1074.
\newblock \href {https://doi.org/10.1137/15M1019210}
  {\path{doi:10.1137/15M1019210}}.

\bibitem{Balabanov2019}
O.~Balabanov, A.~Nouy, Randomized linear algebra for model reduction. {P}art
  {I}: Galerkin methods and error estimation, Advances in Computational
  Mathematics 45~(5) (2019) 2969--3019.
\newblock \href {https://doi.org/10.1007/s10444-019-09725-6}
  {\path{doi:10.1007/s10444-019-09725-6}}.

\bibitem{Balabanov2021}
O.~Balabanov, A.~Nouy, Randomized linear algebra for model reduction--part
  {II}: minimal residual methods and dictionary-based approximation, Advances
  in Computational Mathematics 47~(2) (2021) 26.
\newblock \href {https://doi.org/10.1007/s10444-020-09836-5}
  {\path{doi:10.1007/s10444-020-09836-5}}.

\bibitem{Schleuss23}
J.~Schleu\ss{}, K.~Smetana, L.~ter Maat, Randomized quasi-optimal local
  approximation spaces in time, SIAM Journal on Scientific Computing 45~(3)
  (2023) A1066--A1096.
\newblock \href {https://doi.org/10.1137/22M1481002}
  {\path{doi:10.1137/22M1481002}}.

\bibitem{Herkert2023rand}
R.~Herkert, P.~Buchfink, B.~Haasdonk, J.~Rettberg, J.~Fehr, Randomized
  symplectic model order reduction for {H}amiltonian systems, in: LSSC
  Proceedings 2023, 2023.
\newblock \href {https://doi.org/10.1007/978-3-031-56208-2_9}
  {\path{doi:10.1007/978-3-031-56208-2_9}}.

\bibitem{Benner2021}
P.~Benner, S.~Grivet-Talocia, A.~Quarteroni, G.~Rozza, W.~Schilders, L.~M.
  Silveira, Model order reduction, snapshot-based methods and algorithms, De
  Gruyter, 2 (2020).
\newblock \href {https://doi.org/10.1515/9783110671490}
  {\path{doi:10.1515/9783110671490}}.

\bibitem{Benner2017}
P.~Benner, M.~Ohlberger, A.~Cohen, K.~Willcox, Model {R}eduction and
  {A}pproximation, Society for Industrial and Applied Mathematics,
  Philadelphia, PA, 2017.
\newblock \href {https://doi.org/10.1137/1.9781611974829}
  {\path{doi:10.1137/1.9781611974829}}.

\bibitem{Benner2015}
P.~Benner, S.~Gugercin, K.~Willcox, A survey of projection-based model
  reduction methods for parametric dynamical systems, SIAM Review 57~(4) (2015)
  483--531.
\newblock \href {https://doi.org/10.1137/130932715}
  {\path{doi:10.1137/130932715}}.

\bibitem{daSilva2008}
A.~C. da~Silva, Lectures on Symplectic Geometry, Springer, Berlin, Heidelberg,
  2008.
\newblock \href {https://doi.org/10.1007/978-3-540-45330-7}
  {\path{doi:10.1007/978-3-540-45330-7}}.

\bibitem{Tropp2011Ana}
J.~A. Tropp, Improved analysis of the subsampled randomized {H}adamard
  transform, Advances in Adaptive Data Analysis 3 (2011) 115--126.
\newblock \href {https://doi.org/10.1142/S1793536911000787}
  {\path{doi:10.1142/S1793536911000787}}.

\bibitem{Gu2015}
M.~Gu, Subspace iteration randomization and singular value problems, SIAM
  Journal on Scientific Computing 37~(3) (2015) A1139--A1173.
\newblock \href {https://doi.org/10.1137/130938700}
  {\path{doi:10.1137/130938700}}.

\bibitem{Herkert2023dict}
R.~Herkert, P.~Buchfink, B.~Haasdonk, Dictionary-based online-adaptive
  structure-preserving model order reduction for {H}amiltonian systems,
  Advances in Computational Mathematics 50~(1) (2024) 12.
\newblock \href {https://doi.org/10.1007/s10444-023-10102-7}
  {\path{doi:10.1007/s10444-023-10102-7}}.

\bibitem{hairer2006}
E.~Hairer, M.~Hochbruck, A.~Iserles, C.~Lubich, Geometric numerical
  integration, Oberwolfach Reports 3~(1) (2006) 805--882.
\newblock \href {https://doi.org/10.1007/3-540-30666-8}
  {\path{doi:10.1007/3-540-30666-8}}.

\bibitem{Rettberg2023}
J.~Rettberg, D.~Wittwar, P.~Buchfink, R.~Herkert, J.~Fehr, B.~Haasdonk,
  \href{https://arxiv.org/abs/2303.17329}{Improved a posteriori error bounds
  for reduced port-{H}amiltonian systems}, arXiv preprint (2023). \urlprefix\url{https://arxiv.org/abs/2303.17329}

\bibitem{Smetana19}
K.~Smetana, O.~Zahm, A.~T. Patera, Randomized residual-based error estimators
  for parametrized equations, SIAM Journal on Scientific Computing 41~(2)
  (2019) A900--A926.
\newblock \href {https://doi.org/10.1137/18M120364X}
  {\path{doi:10.1137/18M120364X}}.

\bibitem{Xu2003}
H.~Xu, An {SVD}-like matrix decomposition and its applications, Linear Algebra
  and its Applications 368 (2003) 1--24.
\newblock \href {https://doi.org//10.1016/S0024-3795(03)00370-7}
  {\path{doi:/10.1016/S0024-3795(03)00370-7}}.

\end{thebibliography}

\end{document}